\author{Ofir Gorodetsky} \title{Irreducible polynomials over $\FF_{2^r}$ with three prescribed coefficients}
\date{}
\newtheorem*{thm*}{Theorem}
\newtheorem{thm}{Theorem}[section]
\newtheorem{lem}[thm]{Lemma}  
\newtheorem{proposition}[thm]{Proposition}
\newtheorem{cor}[thm]{Corollary}
\theoremstyle{definition}
\newtheorem{remark}[thm]{Remark}
\newcommand{\QQ}{\mathbb{Q}}
\newcommand{\ZZ}{\mathbb{Z}}
\newcommand{\FF}{\mathbb{F}}
\numberwithin{equation}{section}
\begin{document}
\maketitle
\begin{abstract}
	For any positive integers $n \ge 3$ and $r \ge 1$, we prove that the number of monic irreducible polynomials of degree $n$ over $\FF_{2^r}$ in which the coefficients of $T^{n-1}$, $T^{n-2}$ and $T^{n-3}$ are prescribed has period $24$ as a function of $n$, after a suitable normalization. A similar result holds over $\FF_{5^r}$, with the period being $60$. We also show that this is a phenomena unique to characteristics $2$ and $5$. The result is strongly related to the supersingularity of certain curves associated with cyclotomic function fields, and in particular it complements an equidistribution result of Katz.
\end{abstract}

\section{Introduction}
Let $q=p^r$ be a prime power and let $\FF_q$ denote the finite field containing $q$ elements. Let $\FF_q[T]$ denote the polynomial ring over $\FF_q$ in indeterminate $T$. Let $\mathcal{M}_{n,q}$ denote the set of monic polynomials of degree $n$ in $\FF_q[T]$ and $\mathcal{M}_q = \cup_{n \ge 0} \mathcal{M}_{n,q}$ denote the set of all monic polynomials in $\FF_q[T]$. Let $\mathcal{P}_q$ denote the set of irreducible polynomials in $\mathcal{M}_{q}$. We adopt throughout the following conventions: The $j$-th next-to-leading coefficient of a polynomial $f(T) \in \mathcal{M}_q$ with $j > \deg f$ is considered to be $0$. We assume that $0 \in \mathbb{N}$.

A classical result due to Gauss \cite[pp.~608-611]{gauss1965} is that the number $\pi_q(n)$ of polynomials of degree $n \ge 1$ in $\mathcal{P}_q$ is given by
\begin{equation}\label{gaussprimecount}
\pi_q(n) = \frac{1}{n}\sum_{d \mid n} \mu(d) q^{n/d},
\end{equation}
where $\mu$ is the usual M\"obius function.
A natural extension of this problem is to determine the number of monic irreducible polynomials in $\FF_q[T]$ of degree $n$ for which the first $\ell$ next-to-leading coefficients have the prescribed values $t_1, \ldots, t_{\ell}$, while the remaining coefficients are arbitrary.
Formally, if we let $1_{\mathcal{P}_q} \colon \mathcal{M}_q \to \mathbb{C}$ be the indicator of primes in $\mathcal{M}_q$, then one may study the function
\begin{equation}
\pi_q(n,t_1,\ldots,t_{\ell}) = \sum_{ \substack{f \in \mathcal{M}_{n,q}, \\ \text{ first }\ell \text{ next-to-leading} \\\text{coefficients of } f\text{ are }t_1,\ldots, t_{\ell}}}  1_{\mathcal{P}_q}(f).
\end{equation}
We note that $\pi_q(n,t_1,\ldots, t_{\ell})$ makes sense for any $\ell \in \mathbb{N}$, including $\ell=0$ and $\ell>n$. Part of the motivation for studying  $\pi_q(n,t_1,\ldots, t_{\ell})$ is the following observation. 
If we let 
\begin{equation}
f(T)= T^n + t_1 T^{n-1} + \ldots + t_{\ell} T^{n-\ell},
\end{equation}
then $\pi_q(n,t_1,\ldots,t_{\ell})$ counts the number of irreducible polynomials in the set
\begin{equation}
\{ g \in \FF_q[T] : \deg (g - f) < n-\ell \}, 
\end{equation}
which is a polynomial analogue of an interval in $\mathbb{Z}$, see the discussion in Keating and Rudnick \cite[Sec.~2.1]{keating2014}. The problem of counting integer primes in an interval has a long history. If we let $\pi(x) = \# \{0<p \le x : p\text{ is a prime}\}$ be the prime counting function, then the prime number theorem states that
\begin{equation}
\pi(x) \sim \frac{x}{\ln x}, \qquad x \to \infty.
\end{equation}
Therefore, one may expect that if $\Phi(x)$ is a function that tends to infinity slower than $x$ but not too slow, then the `short' interval $[x,x+\Phi(x)]$ contains about $\frac{\Phi(x)}{\ln x}$ primes, that is,
\begin{equation}\label{estphiint}
\pi(x+\Phi(x))-\pi(x)\sim \frac{\Phi(x)}{\ln x}, \qquad x \to \infty.
\end{equation}
It is conjectured \cite[p.~7]{granville1995} that \eqref{estphiint} holds for $\Phi(x)=x^{\varepsilon}$ for any $\varepsilon \in (0,1)$. Von Koch \cite{koch01} showed that the Riemann Hypothesis implies that \eqref{estphiint} holds for $\Phi(x)=\sqrt{x}\ln^2 x$. Heath-Brown \cite{heathbrown1988}, improving on Huxley \cite{huxley1972}, proved that \eqref{estphiint} holds unconditionally for $\Phi(x)=x^{\frac{7}{12}-\varepsilon(x)}$, where $\varepsilon(x) \to 0$ as $x \to \infty$.

For $\ell < \frac{n}{2}$, methods similar to the ones in the integer setting yield good estimates for $\pi_q(n,t_1,\ldots,t_{\ell})$. In the polynomial setting the Riemann Hypothesis for $L$-Functions was established by Weil \cite{weil1974}, from which one can deduce
\begin{equation}\label{eq:piqest}
\pi_q(n,t_1,\ldots,t_{\ell})= \frac{q^{n-\ell}}{n} +O\left( \frac{\ell+1}{n} q^{\frac{n}{2}}\right),
\end{equation}
see Rhin \cite[Thm.~4]{rhin1972} (cf. Hsu \cite[Cor.~2.6]{hsu1996}). For $n-4 \ge \ell \ge \frac{n}{2}$, asymptotic results are known only in the limit $q \to \infty$, and the methods come from Galois theory. These methods give
\begin{equation}
\pi_q(n,t_1,\ldots,t_{\ell}) = \frac{q^{n-\ell}}{n} + O_n\left(q^{n-\ell-\frac{1}{2}}\right),
\end{equation} 
see the works of Cohen \cite[Thm.~1]{cohen1972} and Bank, Bary-Soroker and Rosenzweig \cite[Thm.~2.3]{bank2015}.

Apart from asymptotic results, several exact results are known. Carlitz obtained formulas for $\pi_q(n,t_1)$ in 1952 \cite[Thm.~3]{carlitz1952}, and Kuz'min obtained formulas for $\pi_q(n, t_1, t_2)$ in 1991 \cite[Thms.~1-4]{kuzmin1991}.

Both Carlitz and Kuz'min have used extensively in their proofs $L$-functions of certain characters. Later, several authors have reproved the results of Carlitz and Kuz'min without employing $L$-functions. Ruskey \emph{et al.} \cite[Thm.~1.1]{ruskey2001} and Yucas \cite[Cor.~2.7]{yucas2006} reproduced Carlitz' result. Cattell \emph{et al.} reproduced Kuz'min's result for the case $q=2$ \cite{cattell2003}, and Ri \emph{et al.} reproduced Kuz'min's result for the case that $q$ is a power of 2 \cite[Thms.~1-4]{ri2014}. Recently, Lal\'\i n and Larocque \cite{lalin2016} obtained a combinatorial proof of Kuz'min's result.

Apart from Carlitz's and Kuz'min's work on $\pi_q(n,t_1,t_2)$, some other interesting exact results are known. Kuz'min determined $\pi_q(4,t_1,t_2,t_3)$ for any $q$ \cite{kuzmin1994}. Yucas and Mullen computed $\pi_2(n, t_1, t_2, t_3)$ for $n$ even \cite[Thm.~4]{yucas2004}, and Fitzgerald and Yucas computed $\pi_2(n, t_1, t_2, t_3)$ for $n$ odd \cite[Thm.~3.4]{fitzgerald2003}. Moisio and Ranto \cite[Lem.~19]{moisio2008} computed $\sum_{t_2 \in \FF_q} \pi_{2^r}(n,0,t_2,t_3)$ for any given $t_3 \in \FF_{2^r}$. Recently, Ahmadi \emph{et al.} computed $\pi_{2^r}(n,0,0,0)$ for any $n$ \cite[Thm.~8]{ahmadi2016}. The mentioned papers again did not employ $L$-functions.
\subsection{Counting with von Mangoldt}
Let $\Lambda_q \colon \mathcal{M}_q \to \mathbb{C}$ be the von Mangoldt function, defined as
\begin{equation}
\Lambda_q(f) = \begin{cases} \deg P & \mbox{if $f=P^k$, $P$  irreducible,} \\ 0 & \mbox{otherwise.} \end{cases}
\end{equation}
The von Mangoldt function counts prime powers with weights, and is closely related to the indicator of primes, $1_{\mathcal{P}_q}$. Let $\psi_q\colon \mathbb{N} \to \mathbb{C}$ be the summatory function of $\Lambda_q$:
\begin{equation}
\psi_q(n) = \sum_{f \in \mathcal{M}_{n,q}} \Lambda_q(f).
\end{equation}
We have the following classical result for any positive integer $n$, from which \eqref{gaussprimecount} is usually derived \cite[Prop.~2.1]{rosen2002}:
\begin{equation}\label{psicount}
\psi_q(n) =q^n.
\end{equation}
Given $t_1,\ldots,t_{\ell} \in \FF_q$, we denote by $\psi_q(n,t_1,\ldots,t_{\ell})$ the sum of $\Lambda_q$ over elements of $\mathcal{M}_{n,q}$ whose first $\ell$ next-to-leading coefficients are $t_1, \ldots, t_{\ell}$:
\begin{equation}
\psi_q(n,t_1,\ldots,t_{\ell}) = \sum_{ \substack{f \in \mathcal{M}_{n,q}, \\ \text{ first }\ell \text{ next-to-leading } \\ \text{coefficients of } f\text{ are }t_1,\ldots, t_{\ell}}}  \Lambda_q(f).
\end{equation}
A standard M\"obius inversion argument allows one to switch between $\psi_q(n,\bullet,\ldots,\bullet)$ and $\pi_q(n,\bullet, \ldots, \bullet)$. This is done explicitly in Proposition \ref{fullmobinv} for any $\ell$ and in Corollary \ref{inv32} for $\ell=3$. The estimate \eqref{eq:piqest} takes the following form in the language of $\psi_q$:
\begin{equation}\label{eq:psiest}
\psi_q(n,t_1,\ldots,t_{\ell})= q^{n-\ell}+O\left( (\ell+1) q^{\frac{n}{2}}\right).
\end{equation}
\subsection{Results}\label{ressec}
In \cite[Thm.~8]{ahmadi2016}, the value of the function $\psi_q(n,0,0,0)$ was determined for any $q$ which is a power of $2$. Let
\begin{equation}
f_q(n,t_1,t_2,t_3) = \frac{\psi_q(n,t_1,t_2,t_3) - q^{n-3}}{q^{n/2}}.
\end{equation}
The authors of \cite{ahmadi2016} proved that $f_q(n,0,0,0)$ has period $24$ in $n$ starting from $n=3$, that is,
\begin{equation}
f_{2^r}(n+24,0,0,0) = f_{2^r}(n,0,0,0)
\end{equation}
for all $r \ge 1$ and $n \ge 3$. Ahmadi \emph{et al.} write \cite[Sec.~9]{ahmadi2016}: ``Based on the $\FF_2$ base field case and our result for $f_q(n,0,0,0)$, we conjecture that for $q = 2^r$, $n \ge 3$ and $t_1, t_2, t_3 \in \FF_q$, the formulas for $f_q(n,t_1,t_3,t_3)$ all have period $24$". We prove this conjecture and exhibit some additional symmetries.
\begin{thm}\label{24period}
Let $q=2^r$. Let $t_1,t_2,t_3 \in \FF_q$. Then 
\begin{equation}
f_q(n,t_1,t_2,t_3)=\frac{\psi_q(n,t_1,t_2,t_3)- q^{n-3}}{q^{n/2}} \text{ is periodic in }n \in \mathbb{N}_{>0} \text{ with period }24.
\end{equation}
Additionally, the following symmetries hold. Let $n \ge 1$. If $\lambda\in \mathbb{N}_{>0}$ is coprime to $6$, then
\begin{equation}
f_q(n,t_1,t_2,t_3) = f_q\left(\lambda  n, t_1,  t_2 + \binom{\lambda}{2} t_1^2, t_3 + \binom{\lambda}{3} t_1^3\right),
\end{equation}
and if $d \in \mathbb{Z}$ is coprime to $6$ and satisfies $d \equiv (\frac{n}{(n,24)})^{-1} \bmod \frac{24}{(n,24)}$, then
\begin{equation}
 f_q(n, t_1,t_2,t_3) = f_q\left( (n,24),t_1, t_2 + \binom{d}{2}  t_1^2, t_3 + \binom{d}{3}  t_1^3\right).
\end{equation}
\end{thm}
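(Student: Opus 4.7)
The plan is to reduce the theorem, via the analytic framework of $L$-functions for short-interval characters, to a supersingularity statement: that all Frobenius eigenvalues of the associated twisted $\Lambda$-sums, rescaled by $\sqrt{q}$, are $24$-th roots of unity. This rigidity phenomenon in characteristic $2$ is the heart of the argument, and establishing it is the main obstacle.

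To set up, I decompose the indicator of ``first three coefficients equal $(t_1,t_2,t_3)$'' into characters of the unipotent subgroup $U = \{g \in (\FF_q[T]/T^{4})^{*} : g(0) = 1\}$, identifying $\mathbf{t}$ with $1 + t_1 T + t_2 T^2 + t_3 T^3 \in U$. Character orthogonality together with \eqref{psicount} gives
\begin{equation}
\psi_q(n,\mathbf{t}) = q^{n-3} + \frac{1}{q^3} \sum_{\chi \ne \chi_0} \overline{\chi(\mathbf{t})} \sum_{f \in \mathcal{M}_{n,q}} \Lambda_q(f) \chi(f),
\end{equation}
where the sum runs over nontrivial characters of $U$ and $\chi(f)$ is evaluated via the reciprocal $f \mapsto T^{\deg f} f(1/T) \bmod T^{4}$. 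For each such $\chi$, $L(u,\chi)$ is a polynomial, and Weil's Riemann Hypothesis together with the explicit formula gives $\sum_{f \in \mathcal{M}_{n,q}} \Lambda_q(f)\chi(f) = -\sum_i \alpha_i(\chi)^n$ with $|\alpha_i(\chi)| = \sqrt{q}$. Setting $\gamma_i(\chi) = \alpha_i(\chi)/\sqrt{q}$, this becomes
\begin{equation}
f_q(n,\mathbf{t}) = -\frac{1}{q^3} \sum_{\chi \ne \chi_0} \overline{\chi(\mathbf{t})} \sum_i \gamma_i(\chi)^n.
\end{equation}

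The key technical step is to show, for every nontrivial $\chi$ and every $i$, that $\gamma_i(\chi) \in \mu_{24}$; granted this, $\gamma_i(\chi)^{n+24} = \gamma_i(\chi)^n$ gives the $24$-periodicity immediately. This claim is equivalent to asserting supersingularity of the Jacobian of the Artin--Schreier cover of $\mathbb{P}^{1}$ attached to $\chi$, together with the sharper statement that Frobenius eigenvalues on $H^{1}$ lie in $\sqrt{q}\cdot\mu_{24}$. I plan to prove this by explicit computation of the Hasse--Witt matrix on these low-conductor covers, showing that it vanishes---forcing all Newton slopes to equal $1/2$---and then pinning down the characteristic polynomial of Frobenius by direct calculation or by recognition of the covers as quotients of a fixed curve whose eigenvalues are already known to be of this form. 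The distinctive role of characteristic $2$ enters through the small genus of these covers, which makes such a rigid eigenvalue structure possible; this is the main obstacle, and a parallel analysis will handle characteristic $5$.

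For the symmetries, since $\gcd(\lambda,6)=1$ forces $\gcd(\lambda,24)=1$, the map $\chi \mapsto \chi^{\lambda}$ is a bijection on the character group of $U$, and the Galois automorphism $\sigma_{\lambda} \colon \zeta \mapsto \zeta^{\lambda}$ of $\mathbb{Q}(\mu_{24})$ sends $L(u,\chi)$ to $L(u,\chi^{\lambda})$ (up to a controlled scalar coming from the action on $\sqrt{q}$). Combined with the direct computation that, in characteristic $2$ with $\lambda$ odd,
\begin{equation}
(1 + t_1 T + t_2 T^2 + t_3 T^3)^{\lambda} \equiv 1 + t_1 T + \big(t_2 + \binom{\lambda}{2} t_1^2\big) T^2 + \big(t_3 + \binom{\lambda}{3} t_1^3\big) T^3 \pmod{T^{4}},
\end{equation}
so that the element of $U$ attached to $\mathbf{t}$ is raised to the $\lambda$-th power, the reindexing $\chi \mapsto \chi^{\lambda}$ in the character expansion produces the first symmetry. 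The second symmetry follows formally: choose $\lambda \in \mathbb{N}_{>0}$ coprime to $6$ with $\lambda \equiv d \pmod{24/(n,24)}$, invoke the first symmetry, and apply $24$-periodicity to reduce $\lambda n$ to $(n,24)$.
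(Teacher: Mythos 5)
Your overall architecture --- expand $\psi_q(n,t_1,t_2,t_3)$ over the characters of $\mathcal{M}_q/R_3$, reduce periodicity to the claim that every normalized inverse root $\gamma_i(\chi)/\sqrt{q}$ lies in $\mu_{24}$, and derive the symmetries from the Galois action $\omega\mapsto\omega^{\lambda}$ --- is exactly the paper's (Lemma \ref{lemformpsi}, Lemma \ref{lfuncroot24}, Lemma \ref{symrootlior}). But the central claim, which you yourself identify as the main obstacle, is never proved: you only announce a plan, and the plan has a hole even as a plan. Vanishing of the Hasse--Witt matrix would give supersingularity, hence that each $\gamma_i(\chi)/\sqrt q$ is a root of unity of \emph{some} order; it does not bound that order by $24$. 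The bound requires an extra step: one must confine these roots of unity to a small cyclotomic field (here the coefficients of $L(u/\sqrt q,\chi)$ lie in $q^{-1/2}\mathbb{Z}[i]$ because $\mathcal{M}_q/R_3$ has exponent $4$, which forces $\omega+\omega^{-1}$ into $\mathbb{Q}(\omega_8)$, after which a finite check gives order dividing $24$) --- and this is precisely the step you defer to ``direct calculation or recognition of the covers as quotients of a fixed curve.'' Note also that in characteristic $2$ the relevant covers are $\mathbb{Z}/4$-covers (Artin--Schreier--Witt), not curves of the form $y^2-y=f(x)$, so the Hasse--Witt computation is not the routine Artin--Schreier one; and the rigidity does not come from ``small genus'' (the degree of $L(u,\chi)$ is $\ell-1=2$ in every characteristic) but from the exponent-$4$ structure of the character group. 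The paper's route is entirely elementary: Fomenko's formulas (Proposition \ref{fomenkprop}) express the two coefficients of $L(u/\sqrt q,\chi)$ through character values at the roots of the cubic $\mu x^3+\lambda x^2+1$, and a case analysis on the number of its $\FF_q$-roots, combined with the positivity $S_{\chi}\ge 0$ forced by the Riemann Hypothesis, leaves a short explicit list of possible $L$-polynomials, each dividing $u^{24}-1$ after rescaling. Without that computation (or a completed geometric substitute) the periodicity claim is unproved.

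A second issue: the ``controlled scalar coming from the action on $\sqrt q$'' in your symmetry argument is not controlled. Only $q^{n/2}f_q(n,\mathbf t)$ is rational; $f_q(n,\mathbf t)$ itself generally is not when $r$ and $n$ are odd, and $\sigma_{\lambda}(\sqrt 2)=-\sqrt 2$ for $\lambda\equiv\pm3\bmod 8$, so the Galois identity yields $f_q(\lambda n,\cdot)=(-1)^{nr}f_q(n,\cdot)$ rather than equality. Concretely, $f_2(1,0,0,0)=3\cdot 2^{-5/2}$ while $f_2(5,0,0,0)=-3\cdot 2^{-5/2}$, so the claimed symmetry with $\lambda=5$, $n=1$ fails. (The same hypothesis \eqref{ratsum} is needed, and not verified, in the paper's own appeal to Lemma \ref{symrootlior}; the periodicity assertion is unaffected, as it uses only $\omega_{24}^{24}=1$.) To repair this you must either restrict to $\lambda\equiv\pm1\bmod 8$, track the sign $(-1)^{nr}$ explicitly, or work with $q^{n/2}f_q$ throughout.
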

We prove an analogue of Theorem \ref{24period} for characteristic $5$.
\begin{thm}\label{60period}
Let $q=5^r$. Let $t_1,t_2,t_3 \in \FF_q$. Then 
\begin{equation}\label{periodicthm5}
f_q(n,t_1,t_2,t_3)=\frac{\psi_q(n,t_1,t_2,t_3)- q^{n-3}}{q^{n/2}} \text{ is periodic in }n \in \mathbb{N}_{>0} \text{ with period }60.
\end{equation}
Additionally, the following symmetries hold. Let $n \ge 1$. If $\lambda \in \mathbb{N}_{>0}$ is coprime to $30$, then
\begin{equation}\label{sym15}
f_q(n,t_1,t_2,t_3) = f_q\left(\lambda  n, \lambda t_1,  \lambda  t_2 + \binom{\lambda}{2} t_1^2, \lambda  t_3 + \lambda(\lambda -1) t_1 t_2+ \binom{\lambda}{3} t_1^3\right),
\end{equation}
and if $d \in \mathbb{Z}$ is coprime to $30$ and satisfies $d \equiv (\frac{n}{(n,60)})^{-1} \bmod \frac{60}{(n,60)}$, then
\begin{equation}\label{sym25}
f_q(n, t_1,t_2,t_3) = f_q\left( (n,60),d t_1, d t_2 + \binom{d}{2}  t_1^2, dt_3 + d(d-1) t_1 t_2+ \binom{d}{3}  t_1^3\right).
\end{equation}
\end{thm}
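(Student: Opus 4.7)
The plan is to adapt the character-theoretic method that (presumably) proves Theorem \ref{24period} to characteristic 5, where the wild-ramification structure of characters at infinity produces the period 60 in place of 24. Let $U^{(k)} = 1 + (1/T)^k \FF_q[[1/T]]$ and $G = U^{(1)}/U^{(4)}$, a group of order $q^3$; the first three next-to-leading coefficients of a monic $f \in \mathcal{M}_{n,q}$ are recorded by the class $f(T)/T^{\deg f} \in G$. Writing $u_t = 1 + t_1/T + t_2/T^2 + t_3/T^3$, orthogonality on $G$ gives
\begin{equation}
\psi_q(n,t_1,t_2,t_3) = q^{n-3} + \frac{1}{q^3}\sum_{\chi \ne 1}\overline{\chi}(u_t)\,\Psi_q(n,\chi),
\end{equation}
where $\Psi_q(n,\chi) = \sum_{f \in \mathcal{M}_{n,q}} \Lambda_q(f)\chi(f/T^{\deg f})$ is the twisted von Mangoldt sum for the Hecke character $\chi$ of $\FF_q(T)$ ramified only at infinity with conductor dividing $\infty^4$. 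Weil's Riemann Hypothesis yields $\Psi_q(n,\chi) = -\sum_i \alpha_i(\chi)^n$ with each $|\alpha_i(\chi)| \le \sqrt q$.

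The periodicity \eqref{periodicthm5} becomes equivalent, by Vandermonde-type reasoning on the exponential expression for $f_q$, to the claim that for every nontrivial $\chi$ on $G$ and every inverse root $\alpha_i(\chi)$, either $\alpha_i(\chi)=0$ or $\alpha_i(\chi)^{60}=q^{30}$. This is a supersingularity statement for the $L$-functions of characters of the ray-class group of $\FF_q(T)$ of conductor $\infty^4$ in characteristic $5$. I would prove it by decomposing characters along the wild filtration $U^{(4)} \subset U^{(3)} \subset U^{(2)} \subset U^{(1)}$, expressing each $L(s,\chi)$ explicitly as an Artin--Schreier / Kloosterman-type exponential sum, and then verifying via a Hasse--Witt matrix or Cartier-operator computation that the associated curve is supersingular with Frobenius eigenvalues lying in $\sqrt{q}\cdot\mu_{60}$. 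The period $60$ reflects a low-characteristic coincidence in $p=5$ analogous to the one in $p=2$ that produces $24$.

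For the symmetries \eqref{sym15}--\eqref{sym25}, a direct binomial expansion in $G$ gives
\begin{equation}
u_t^\lambda \equiv 1 + \frac{\lambda t_1}{T} + \frac{\lambda t_2 + \binom{\lambda}{2}t_1^2}{T^2} + \frac{\lambda t_3 + \lambda(\lambda-1)t_1 t_2 + \binom{\lambda}{3} t_1^3}{T^3} \pmod{U^{(4)}}.
\end{equation}
Combined with the identity $\alpha_i(\chi^\lambda) = \alpha_i(\chi)^\lambda$ (a consequence of supersingularity: once every nonzero eigenvalue has the form $\zeta\sqrt{q}$ with $\zeta$ a root of unity, the Adams operation $\chi \mapsto \chi^\lambda$ must permute them via $\zeta \mapsto \zeta^\lambda$), the substitutions $\chi \to \chi^\lambda$ and $n \to \lambda n$ in the character decomposition send $(t_1,t_2,t_3)$ into exactly the right-hand side of \eqref{sym15}. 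The symmetry \eqref{sym25} follows by choosing $d$ with $dn\equiv(n,60)\pmod{60}$, applying \eqref{sym15} with $\lambda=d$, and invoking \eqref{periodicthm5} to replace $dn$ by $(n,60)$.

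The principal obstacle is the supersingularity and exact-eigenvalue computation underlying the second paragraph. Per the abstract, this phenomenon is \emph{unique} to characteristics $2$ and $5$, so no uniform generic argument is possible: one must exploit a specific low-characteristic miracle in $p=5$, likely via a detailed Newton-polygon analysis of the relevant cyclotomic function-field $L$-polynomials or via explicit evaluation of the Gauss and Kloosterman sums arising on each wild layer. Making this rigorous for all nontrivial characters on $G$ simultaneously — rather than only those of conductor exactly $\infty^4$ — is where I would expect the main technical difficulty to lie, and it is also what connects the result to Katz's equidistribution theorems mentioned in the abstract.
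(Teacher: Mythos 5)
Your overall strategy is the paper's: decompose $\psi_q$ over the character group of $\mathcal{M}_q/R_3$ (your $U^{(1)}/U^{(4)}$), invoke the Riemann Hypothesis, and reduce periodicity to the statement that every nonzero inverse root is $\sqrt q$ times a $60$th root of unity. But you have left exactly that statement --- which is the entire mathematical content of the theorem --- as a plan rather than a proof. The paper's Lemma \ref{artinunity} establishes it in three concrete steps: (i) Lemma \ref{lstruct3} uses the $\log$-parametrization of Lemma \ref{aslem} (valid since $p=5>3$) and a completion of the cube to write $L(u/(\sqrt q\,\omega_p^c),\chi)=1+Su/\sqrt q+u^2$ for primitive $\chi$, with $S=\sum_{x\in\FF_q}\chi_q(ax^3+bx)$; (ii) it quotes van der Geer--van der Vlugt's supersingularity of $y^5-y=ax^6+bx^2$, hence of $y^5-y=ax^3+bx$, so $S=-\sqrt q(\omega+\omega^{-1})$ with $\omega$ a root of unity; (iii) it pins the order of $\omega$ down to a divisor of $60$ by noting $\omega+\omega^{-1}\in\QQ(\omega_5)$ and comparing degrees ($\phi(d)\mid 8$, with $8\mid d$ excluded because $\QQ(\sqrt2)\nsubseteq\QQ(\omega_5)$). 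Step (iii) is not optional: supersingularity alone gives ``some root of unity,'' not order dividing $60$, and your sketch does not say how the specific bound $60$ would emerge from a Hasse--Witt or Cartier computation. The imprimitive characters are handled by Proposition \ref{sym2}, which gives order dividing $2p=10$ since $5\equiv1\bmod 4$.

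Separately, your justification of the symmetries is incorrect as stated. The identity $\alpha_i(\chi^\lambda)=\alpha_i(\chi)^\lambda$ is \emph{not} a consequence of supersingularity: knowing that the eigenvalues of $\chi$ and of $\chi^\lambda$ each lie in $\sqrt q\cdot\mu_{60}$ says nothing about how the two multisets are related to one another. The correct mechanism, encoded in the paper's Lemma \ref{symrootlior}, is Galois conjugation applied to the entire rational quantity $f_q(n,\cdot)$: one writes it as $S(k_1,k_2)=\sum_i\omega_{60}^{ik_1}F_i(\omega_5^{k_2})$ with $F_i\in\QQ[x]$, observes that $S(k_1,k_2)\in\QQ$, and applies the automorphism $\omega_{60}\mapsto\omega_{60}^\lambda$ to conclude $S(k_1,k_2)=S(\lambda k_1,\lambda k_2)$; the substitution $A\mapsto A^{k_2}$ in the character argument then produces exactly your binomial formulas. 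Your computation of $u_t^\lambda$ is right, but you need this Galois argument (or an equivalent) to tie it to the eigenvalues; as written, that step of your proposal would fail.
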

We prove a partial converse for Theorem \ref{24period} and Theorem \ref{60period}.
\begin{thm}\label{conv2460}
Let $\ell$ be a positive integer. Let $q=p^r$ be a prime power. Define the following function in variable $n\in \mathbb{N}_{>0}$:
\begin{equation}\label{fql}
f_{q,\ell}(n) =  \frac{\psi_q(n,\underbrace{0,\ldots, 0}_{\ell})-q^{n-\ell}}{q^{\frac{n}{2}}}.
\end{equation}
If $\ell =3$ and $p \notin \{2,5\}$ or $\ell \ge 4$, then the function $f_{q,\ell}(n)$ is not periodic in $n$.
\end{thm}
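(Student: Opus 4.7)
The plan is to reduce non-periodicity of $f_{q,\ell}(n)$ to the non-supersingularity of certain L-functions in the cyclotomic-function-field framework, as the abstract suggests. Following Rhin and Hsu, one uses Dirichlet characters modulo $T^{\ell+1}$ factored by scalars: each monic polynomial of degree $n$ projects to a class in an abelian group $\mathcal{G}$ of order $q^{\ell}$, determined by its first $\ell$ next-to-leading coefficients. Orthogonality gives
\begin{equation*}
\psi_q(n,0,\ldots,0) - q^{n-\ell} = q^{-\ell}\sum_{\chi \neq \chi_0} \psi_q(n,\chi),
\end{equation*}
Weil's theorem factors each $L(u,\chi) = \prod_i (1 - \alpha_i(\chi) u)$ with $|\alpha_i(\chi)| = \sqrt{q}$, and the explicit formula gives $\psi_q(n,\chi) = -\sum_i \alpha_i(\chi)^n$. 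Setting $\beta_i(\chi) = \alpha_i(\chi)/\sqrt{q}$, one obtains
\begin{equation*}
f_{q,\ell}(n) = -q^{-\ell}\sum_{\chi \neq \chi_0,\,i}\beta_i(\chi)^n, \qquad |\beta_i(\chi)| = 1.
\end{equation*}

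A standard Vandermonde argument applied to the identity $\sum_\beta c_\beta(\beta^N-1)\beta^n = 0$ (valid for every $n$ when $f_{q,\ell}$ has period $N$) shows that a finite linear combination $\sum_\beta c_\beta \beta^n$ with nonzero $c_\beta$ and distinct unit-modulus $\beta$ is periodic if and only if every $\beta$ appearing is a root of unity. Hence $f_{q,\ell}(n)$ is periodic if and only if $\alpha_i(\chi)^2/q$ is a root of unity for every nontrivial $\chi$ and every $i$---equivalently, the curve attached to each nontrivial $\chi$ is supersingular in the classical sense.

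The theorem now reduces to exhibiting, in each case of its hypothesis, a nontrivial character $\chi$ with a non-supersingular Frobenius eigenvalue. For $\ell = 3$ and $p \notin \{2,5\}$, I would pick an explicit primitive character of conductor $T^4$, identify its cubic $L$-polynomial with the zeta numerator of a concrete Artin--Schreier cover $y^p - y = h(x)$ of $\mathbb{A}^1$ with $\deg h = 4$, and compute the Hasse--Witt matrix (or apply Stickelberger's theorem to the underlying Gauss sums) to show that this cover fails to be supersingular precisely when $p \notin \{2,5\}$; this should mirror the exceptional roles of $2$ and $5$ in Theorems \ref{24period} and \ref{60period}. For $\ell \geq 4$, if $p \notin \{2,5\}$ a non-supersingular witness at conductor $T^4$ inflates to any higher conductor (the non-supersingular root of the cubic L-polynomial persists as a root of the inflated L-polynomial); if $p \in \{2,5\}$ and $\ell \geq 4$, I would exhibit a primitive character of conductor $T^{\ell+1}$ whose L-polynomial has degree $\ell \geq 4$ and verify by direct computation (e.g.\ at a small base $q$) that at least one Frobenius eigenvalue is not $\sqrt{q}$ times a root of unity, then propagate to arbitrary $q = p^r$ via base change.

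The main obstacle is the $\ell = 3$, $p \notin \{2,5\}$ case: carrying out the explicit $p$-adic computation on the cubic Artin--Schreier covers, and pinpointing why $p \in \{2,5\}$ are exactly the supersingular characteristics, constitutes the technical heart of the argument---the two excluded primes must emerge as zeros of a concrete $p$-adic invariant attached to $h$. A secondary difficulty, for $\ell \geq 4$ in characteristics $2$ and $5$, is producing a non-supersingular primitive character of conductor $T^{\ell+1}$ uniformly over all $\ell$; while Katz's equidistribution results guarantee this is generic in the large-$q$ limit, making it effective at fixed $q$ requires an ad hoc construction that I expect to be the subtlest step of the proof.
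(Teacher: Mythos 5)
Your reduction is the same as the paper's: write $f_{q,\ell}(n)=-q^{-\ell}\sum_{\chi\neq\chi_0}\sum_i(\gamma_i(\chi)/\sqrt q)^n$ via the explicit formula, observe that such a sum (with the nonzero coefficients $-q^{-\ell}\cdot(\text{multiplicity})$) is periodic if and only if every normalized inverse root is a root of unity (the paper's Lemma \ref{rootsunitylemma}, proved by generating functions rather than Vandermonde, but equivalent), and reduce from $q=p^r$ to $q=p$ by lifting characters, since \eqref{liftlform} replaces each $\gamma$ by $\gamma^r$. Your observation that a witness at level $3$ or $4$ serves for all larger $\ell$ via $G(R_3)\subseteq G(R_\ell)$ is also what the paper does (so the ``secondary difficulty'' you flag for $\ell\ge 4$ in characteristics $2$ and $5$ is not a difficulty: one explicit character in $G(R_{4,2})$ and one in $G(R_{4,5})$ suffice, and the paper simply writes them down and computes their cubic $L$-polynomials).

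The genuine gap is that the entire second half --- actually exhibiting a character whose normalized inverse roots are not roots of unity --- is left as a plan, and the plan has two problems. First, for $\ell=3$ and $p\ge 7$ you defer to an unexecuted Hasse--Witt or Stickelberger computation and yourself call it the technical heart; the paper's Lemma \ref{p7notss} does something different and elementary: it reduces via Lemma \ref{lstruct3} to the sum $S=\sum_x\chi_p(ax^3+bx)$, assumes $S/\sqrt p=-(\omega_n^i+\omega_n^{-i})$, forces $n\mid 2(2,t)p$ by comparing the fields $\QQ(\omega_n+\omega_n^{-1})\subseteq\QQ(\omega_{tp}+\omega_{tp}^{-1})$, and then compares coefficients of $\omega_p^i$ to conclude that $ax^3+bx$ would have to be a permutation polynomial of a form excluded by Dickson's classification (with Lemma \ref{quadreslem} ruling out $j\neq 0$). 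Note also that your phrase ``fails to be supersingular precisely when $p\notin\{2,5\}$'' cannot hold for a single fixed character: when $p\equiv 2\bmod 3$ and $\lambda_2^2-4\lambda_1\lambda_3=0$ the roots \emph{are} roots of unity, so the witness must avoid this locus. Second, your Artin--Schreier framework for $\ell=3$ requires $p>\ell$, so it does not cover $p=3$: there $\mathcal{M}_3/R_3$ has exponent $9$, the characters take values in ninth roots of unity, and the $L$-function is not that of a $y^p-y=h(x)$ cover. The paper handles $p=3$ (and likewise $p=2,\ell\ge4$ and $p=5,\ell\ge4$) by a direct hand computation of one explicit $L$-polynomial, e.g.\ $L(u/\sqrt3,\chi)=1+\tfrac{1+\omega_9+\overline{\omega_9}}{\sqrt3}u+u^2$, whose roots are visibly not algebraic integers. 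As it stands your proposal establishes the correct equivalence but proves none of the required non-vanishing cases.
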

From Theorems \ref{24period}--\ref{conv2460} and Proposition \ref{sym2}, we immediately obtain the following result.
\begin{cor}
Let $\ell$ be a positive integer and $q=p^r$ be a prime power. The function $f_{q,\ell}(n)$ defined in \eqref{fql} is periodic in $n$ if and only if one of the following conditions hold.
\begin{enumerate}
\item $\ell = 1$, in which case the period is $1$.
\item $\ell =2$, in which case the period divides $4p$.
\item $\ell = 3$, $p \in \{2,5\}$, in which case the period divides $12 p$.
\end{enumerate}
\end{cor}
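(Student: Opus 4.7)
The corollary is a pure bookkeeping statement: each case follows by direct appeal to one of the earlier results, with the observation that $f_{q,\ell}(n)$ as defined in \eqref{fql} coincides with $f_q(n,0,\ldots,0)$ (with $\ell$ zeros) supplying the link to the theorems above. I would therefore organize the proof by splitting into the three ranges of parameters on the ``if'' side and handling the remainder by the converse direction.

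For the ``if'' direction I would case-split on $\ell$. When $\ell = 1$ I would invoke Proposition~\ref{sym2}, whose role in the paper is precisely to record the classical symmetries of $\psi_q(n,t_1)$; these force $f_{q,1}$ to be constant, i.e.\ have period $1$. When $\ell = 2$ I would again invoke Proposition~\ref{sym2} to extract a period dividing $4p$. When $\ell = 3$ and $p = 2$, substituting $t_1 = t_2 = t_3 = 0$ in Theorem~\ref{24period} gives period dividing $24 = 12p$; when $\ell = 3$ and $p = 5$, the same substitution in Theorem~\ref{60period} gives period dividing $60 = 12p$.

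For the ``only if'' direction, every remaining case --- namely $\ell = 3$ with $p \notin \{2,5\}$, together with $\ell \ge 4$ (for any prime $p$) --- is handled in a single step by Theorem~\ref{conv2460}, which explicitly asserts non-periodicity of $f_{q,\ell}(n)$ in exactly these ranges.

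The only subtle point is the interface with Proposition~\ref{sym2}: one must verify that its statement actually delivers genuine periodicity (and not merely a symmetry under, say, the substitution $T \mapsto T-c$) for $\ell \in \{1,2\}$, and that the periods extracted are $1$ and a divisor of $4p$ respectively. Given that Proposition~\ref{sym2} is introduced in the paper precisely for this packaging purpose, I expect the verification to be immediate, and no ideas beyond those already present in Theorems~\ref{24period}--\ref{conv2460} are needed.
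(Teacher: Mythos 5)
Your proposal is correct and matches the paper's (essentially one-line) proof: the corollary is assembled by citing Proposition \ref{sym2} and Theorems \ref{24period} and \ref{60period} for the ``if'' direction and Theorem \ref{conv2460} for the ``only if'' direction. The only slight misattribution is the $\ell=1$ case, which comes not from Proposition \ref{sym2} itself (whose statement concerns $\ell=2$) but from the identity $\psi_q(n,t_1)=q^{n-1}$ in \eqref{eq:NewCarlitz} recorded just before it, which gives $f_{q,1}\equiv 0$; this changes nothing of substance.
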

We note that by \eqref{eq:psiest}, it make sense to consider the periodicity of the normalized functions $f_q(n,t_1,t_2,t_3)$ and $f_{q,\ell}(n)$, as they are bounded as functions of $n$. 

We discuss an important consequence of Theorem \ref{24period}, namely, that it allows us to obtain formulas for $\psi_{2^r}(n,t_1,t_2,t_3)$ for most values of $n$. The last part of Theorem \ref{24period} reduces the computation of $\psi_{2^r}(n,t_1,t_2,t_3)$ to the computation of $\psi_{2^r}((n,24),\tilde{t_1},\tilde{t_2},\tilde{t_3})$ for some $\tilde{t_i}$. There are 8 possible values of $(n,24)$: 1, 2, 3, 4, 6, 8, 12 and 24. If $(n,24)=1$, then the computation of $\psi_{2^r}((n,24),\tilde{t_1},\tilde{t_2},\tilde{t_3})$ is easy. Indeed, 
\begin{equation}
\psi_{2^r}(1,\tilde{t_1},\tilde{t_2},\tilde{t_3}) = 1_{\tilde{t_2}=\tilde{t_3}=0},
\end{equation}
since there is no monic linear polynomial with first 3 next-to-leading coefficients equal to $\tilde{t_i}$, unless $\tilde{t_2}=\tilde{t_3}=0$, in which case there is one such polynomial: $T+\tilde{t_1}$. The case $(n,24)=2$ is similar -- $\psi_q(2,\tilde{t_1},\tilde{t_2},\tilde{t_3})$ is $0$ if $\tilde{t_3} \neq 0$, and we obtain 
\begin{equation}
\psi_{2^r}(2,\tilde{t_1},\tilde{t_2},\tilde{t_3}) = \Lambda_{2^r}(T^2+\tilde{t_1}T+\tilde{t_2}) \cdot  1_{\tilde{t_3}=0}.
\end{equation}
The case $(n,24)=3$ is also simple. The only monic cubic polynomial with 3 next-to-leading coefficients equal to $\tilde{t_i}$ is $T^3+\tilde{t_1}T^2+\tilde{t_2}T+\tilde{t_3}$, and so
\begin{equation}
\psi_{2^r}(3,\tilde{t_1},\tilde{t_2},\tilde{t_3}) = \Lambda_{2^r}(T^3+\tilde{t_1}T^2+\tilde{t_2}T+\tilde{t_3}),
\end{equation}
which may be computed using the irreducibility criteria for cubic polynomials in characteristic 2 \cite{williams1975}.

If $(n,24)=4$, the evaluation of $\psi_q(4,\tilde{t_1},\tilde{t_2},\tilde{t_3})$ was done by Kuz'min \cite{kuzmin1994}, and sometimes involves Kloosterman sums (depending on $\tilde{t_i}$). If $(n,24)=24$, then we have the following corollary of Theorem \ref{24period} and its proof, proved in \S\ref{sec:proofformula24}.
\begin{cor}\label{cor:char224}
Let $q=2^r$. Let $t_1,t_2,t_3 \in \FF_q$. Then
\begin{equation}
\psi_q(24,t_1,t_2,t_3) = q^{21} - q^{10} (2q^2 \cdot 1_{t_1=t_2=t_3=0} - q \cdot 1_{t_1=t_2=0} - 1_{t_1=0}).
\end{equation}
\end{cor}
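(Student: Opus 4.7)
My plan is to extract the value of $\psi_q(24,t_1,t_2,t_3)$ directly from the closed-form expression for $\psi_q(n,t_1,t_2,t_3) - q^{n-3}$ that is produced along the way in the proof of Theorem~\ref{24period} (as opposed to from the statement of the theorem alone). That proof should express the error term as
\[
\psi_q(n,t_1,t_2,t_3) - q^{n-3} = \sum_j c_j(t_1,t_2,t_3)\, \alpha_j^n,
\]
where each Frobenius eigenvalue $\alpha_j$ is either a weight-one Weil number with $\alpha_j/\sqrt{q} \in \mu_{24}$ (the ``supersingular'' contributions, which are precisely what forces the period to divide $24$), or a nonnegative integer power of $q$ coming from the prime-power terms $P^k$, $k\ge 2$, in the von Mangoldt decomposition $\psi_q(n,\cdot) = \sum_{k\mid n} (n/k)\, \pi_q^{(k)}(n/k,\cdot)$. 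At $n = 24$ every supersingular $\alpha_j$ satisfies $\alpha_j^{24} = q^{12}$, and every trivial $\alpha_j = q^a$ contributes $q^{24a}$, so the whole right-hand side collapses to a $\mathbb{Z}$-linear combination of a handful of explicit powers of $q$.

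To pin down the trivial (prime-power) contributions, I would enumerate, for each divisor $k\in\{2,3,4,6,8,12,24\}$ of $24$, the first three coefficients of $P^k$ as a function of those of $P$. In characteristic~$2$ the Frobenius $P\mapsto P^2$ places coefficients in even positions, so a direct computation shows that $P^k$ already has $(0,0,0)$ as its first three coefficients for every $k\in\{4,8,12,24\}$ (these $k$ therefore contribute only when $t_1=t_2=t_3=0$), while $k=3$ produces the substitution $(a_1,a_2,a_3) = (t_1, t_2+t_1^2, t_3+t_1^3)$ inside $\pi_q(8,\cdot,\cdot,\cdot)$, and $k\in\{2,6\}$ contributes only when $t_1=t_3=0$ with the leading sub-coefficient forced to equal $\sqrt{t_2}$. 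Together with the Gauss--Carlitz closed forms for $\pi_q(1), \pi_q(2), \pi_q(3), \pi_q(6)$, and $\pi_q(m, t)$ for small $m$, this identifies all of the trivial contributions at $n=24$ as explicit powers of $q$ depending only on which of the $t_i$ vanish.

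Finally, the supersingular contribution is read off the formula of the previous paragraph at $n = 24$, where each supersingular eigenvalue contributes exactly $q^{12}$ times the signed dimension of its eigenspace. Splitting into the four cases determined by which of $t_1, t_2, t_3$ vanish and combining both types of contributions yields the claimed identity; the subcase $t_1 = t_2 = t_3 = 0$ recovers \cite[Thm.~8]{ahmadi2016} as a consistency check. I expect the main obstacle to be the sign and multiplicity bookkeeping -- ensuring that the supersingular and trivial contributions collapse to exactly $q^{21} - q^{10}(2q^2\cdot 1_{t_1=t_2=t_3=0} - q\cdot 1_{t_1=t_2=0} - 1_{t_1=0})$ rather than some equivalent but less concise expression, and in particular that the $q^{12}$-sized piece vanishes in the three cases $t_1\neq 0$, $t_1=0\neq t_2$, and $t_1=t_2=0\neq t_3$.
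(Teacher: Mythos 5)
Your opening idea --- evaluate the eigenvalue expansion from the proof of Theorem \ref{24period} at $n=24$, where every $\gamma/\sqrt{q}\in\mu_{24}$ becomes $1$ and hence $\gamma^{24}=q^{12}$ --- is exactly the paper's route. But the expansion you posit is not the one the paper produces, and the discrepancy breaks your argument. By Lemma \ref{lemformpsi} the identity
\begin{equation}
\psi_q(n,t_1,t_2,t_3)-q^{n-3}=-\frac{1}{q^{3}}\sum_{\chi_0\neq\chi\in G(R_{3})}\overline{\chi}(t_1,t_2,t_3)\sum_{i=1}^{\deg L(u,\chi)}\gamma_i(\chi)^{n}
\end{equation}
is exact, and every $\gamma_i(\chi)$ has absolute value $\sqrt{q}$. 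There are no further eigenvalues ``$q^a$ coming from the prime-power terms $P^k$'': the von Mangoldt weighting of prime powers is already absorbed into this formula, and the only integer power of $q$ present is the main term $q^{n-3}$ from the trivial character. Your second paragraph therefore hunts for contributions that do not exist. Worse, if you actually decompose $\psi_q(24,A)$ by prime powers, the $k=1$ piece is $24\cdot\#\{P\text{ irreducible},\ \deg P=24,\ P\equiv A\bmod R_3\}=24\,\pi_q(24,t_1,t_2,t_3)$, which has no ``Gauss--Carlitz closed form'' and is exactly as hard as the quantity you are trying to compute; the proposed reduction is circular.

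What your sketch leaves out is the entire computational content of the corollary. At $n=24$ the inner sum collapses to $\deg L(u,\chi)$, which equals $0$, $1$, $2$ according as $\chi$ lies in $G(R_1)$, $G(R_2)\setminus G(R_1)$, $G(R_3)\setminus G(R_2)$, so one must evaluate the weighted character sum $\sum_{\chi\in G(R_2)\setminus G(R_1)}\overline{\chi}(A)+2\sum_{\chi\in G(R_3)\setminus G(R_2)}\overline{\chi}(A)$. Orthogonality gives $\sum_{\chi\in G(R_k)}\overline{\chi}(t_1,t_2,t_3)=q^{k}\cdot 1_{t_1=\cdots=t_k=0}$, and the telescoping combination $\bigl(q^{2}1_{t_1=t_2=0}-q1_{t_1=0}\bigr)+2\bigl(q^{3}1_{t_1=t_2=t_3=0}-q^{2}1_{t_1=t_2=0}\bigr)=2q^{3}1_{t_1=t_2=t_3=0}-q^{2}1_{t_1=t_2=0}-q1_{t_1=0}$; multiplying by $-q^{12}/q^{3}$ yields the stated $-q^{10}\bigl(2q^{2}\cdot 1_{t_1=t_2=t_3=0}-q\cdot 1_{t_1=t_2=0}-1_{t_1=0}\bigr)$. ``Splitting into four cases and combining'' does not substitute for this step; the coefficients $-\overline{\chi}(A)/q^{3}$ are complex character values, not signed eigenspace dimensions, and it is only after summing over $\chi$ that the indicator functions appear.
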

The cases $(n,24)\in \{6,8,12\}$ seem harder and we do not treat them here. In characteristic 5, Theorem \ref{60period} similarly reduces the computation of $\psi_{5^r}(n,t_1,t_2,t_3)$ to the computation of $\psi_{5^r}((n,60),\tilde{t_1},\tilde{t_2},\tilde{t_3})$ for some $\tilde{t_i}$. The cases $(n,60) \in \{1,2,3\}$ are again easy, $(n,60)=4$ was done by Kuz'min and $(n,60)=60$ is treated in the following corollary, proved in \S\ref{sec:proofformula60}.
\begin{cor}\label{cor:char560}
Let $q=5^r$. Let $t_1,t_2,t_3 \in \FF_q$. Then
\begin{equation}
\psi_q(60,t_1,t_2,t_3) = q^{57} - q^{28} (2q^2 \cdot 1_{t_1=t_2=t_3=0} - q \cdot 1_{t_1=t_2=0} - 1_{t_1=0}).
\end{equation}
\end{cor}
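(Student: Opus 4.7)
The plan mirrors the proof of the characteristic-$2$ analogue, Corollary \ref{cor:char224}. The starting point is the decomposition of $\psi_q(60, t_1, t_2, t_3)$ according to the degree of the prime in the prime-power factorization:
\begin{equation*}
\psi_q(60, t_1, t_2, t_3) = \sum_{d \mid 60} d \cdot N_d(t_1, t_2, t_3),
\end{equation*}
where $N_d(t_1, t_2, t_3)$ counts $P \in \mathcal{P}_q$ of degree $d$ such that $P^{60/d}$ has first three next-to-leading coefficients $(t_1, t_2, t_3)$.

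The divisors of $60$ split into two classes. For $d \in \{1, 2, 3, 4, 6, 12\}$ (equivalently, $5 \mid 60/d$), the Freshman's dream in characteristic $5$ forces $P^{60/d} \in \FF_q[T^5]$; in particular, its coefficients of $T^{59}, T^{58}, T^{57}$ all vanish. Hence $N_d(t_1, t_2, t_3) = \pi_q(d) \cdot 1_{t_1 = t_2 = t_3 = 0}$, and the aggregate contribution of these $d$ is $\psi_q(12) \cdot 1_{t_1 = t_2 = t_3 = 0} = q^{12} \cdot 1_{t_1 = t_2 = t_3 = 0}$. For $d \in \{5, 10, 15, 20, 30, 60\}$ (equivalently, $5 \nmid 60/d$), the map sending the first three coefficients of $P$ to those of $P^{60/d}$ is an invertible polynomial bijection of $\FF_q^3$, given by
\begin{equation*}
(a_1, a_2, a_3) \mapsto \left(k a_1,\ k a_2 + \binom{k}{2} a_1^2,\ k a_3 + k(k-1) a_1 a_2 + \binom{k}{3} a_1^3\right)
\end{equation*}
with $k = 60/d$, which is exactly the reparametrization from the symmetry \eqref{sym15}. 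Consequently $N_d(t_1, t_2, t_3) = \pi_q(d, \tilde{t}^{(d)})$ for a reparametrized triple $\tilde{t}^{(d)}$.

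To evaluate the remaining sum $\sum_{d \in \{5, 10, 15, 20, 30, 60\}} d \cdot \pi_q(d, \tilde{t}^{(d)})$, I would invoke the explicit formula for $\psi_q$ that emerges during the proof of Theorem \ref{60period}. That formula writes $\psi_q(n, s) - q^{n-3}$ as a finite sum $\sum_j c_j(s; q) \alpha_j^n$, where the $\alpha_j$ are Frobenius eigenvalues of the supersingular curves attached to the relevant cyclotomic function fields, satisfying $\alpha_j^{60} = q^{30}$; this is the identity that produces the period-$60$ phenomenon. Converting $\pi_q$ to $\psi_q$ via M\"obius inversion (Corollary \ref{inv32}) and summing over $d \in \{5, 10, 15, 20, 30, 60\}$, the specializations $\alpha_j^d$ combine cleanly, yielding the claimed formula $q^{57} - q^{28}(2q^2 \cdot 1_{t_1=t_2=t_3=0} - q \cdot 1_{t_1=t_2=0} - 1_{t_1=0})$.

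The main obstacle is the combinatorial verification that these Frobenius contributions aggregate into exactly the coefficients $2q^2$, $-q$, $-1$ in front of the three indicator functions. These coefficients reflect explicit rank and dimension data for the underlying supersingular Jacobians and are dictated by the same identities that prove Corollary \ref{cor:char224} in characteristic $2$, with $(24, q^{10})$ replaced throughout by $(60, q^{28})$.
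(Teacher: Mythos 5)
Your proposal has a genuine gap precisely at the decisive step, and the detour through $\pi_q$ makes that step much harder than it needs to be. The paper's proof (\S\ref{sec:proofformula60}, mirroring \S\ref{sec:proofformula24}) works directly with $\psi_q(60,t_1,t_2,t_3)$ via Lemma \ref{lemformpsi}: since every inverse root satisfies $(\gamma_i(\chi)/\sqrt{q})^{60}=1$ by Proposition \ref{sym2} and Lemma \ref{artinunity}, the inner sum over roots collapses to $\deg L(u,\chi)\in\{1,2\}$, and the character sum telescopes to $2\sum_{G(R_3)}-\sum_{G(R_2)}-\sum_{G(R_1)}$, whence orthogonality \eqref{orthouse} produces the coefficients $2q^2$, $-q$, $-1$ in two lines. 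Your decomposition $\psi_q(60,\cdot)=\sum_{d\mid 60} d\,N_d$ and the treatment of the divisors with $5\mid 60/d$ are correct (that part contributes $q^{12}\cdot 1_{t_1=t_2=t_3=0}$, which is ultimately absorbed), but for $d\in\{5,10,15,20,30\}$ the assertion that ``the specializations $\alpha_j^d$ combine cleanly'' is unjustified and, as stated, false: the lemma only gives $(\gamma/\sqrt{q})^{60}=1$, so for $d<60$ the powers $(\gamma/\sqrt{q})^{d}$ are genuinely nontrivial roots of unity whose values depend on $\chi$ (e.g.\ $(\gamma/\sqrt{q})^{30}=\pm1$), and controlling their aggregate requires knowing the distribution of the orders of the $\gamma_i(\chi)/\sqrt{q}$ over all $\chi$ --- information the paper never computes and does not need. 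You acknowledge this yourself by calling the aggregation ``the main obstacle,'' but an obstacle flagged and not overcome is exactly the missing proof.

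If you want to salvage your route, note that after M\"obius inversion each $\pi_q(d,\tilde t^{(d)})$ reintroduces $\psi_q$ at arguments $d/e$ for $e\mid d$, and the whole double sum must reassemble into $\psi_q(60,\cdot)$ --- i.e.\ you would be undoing your own decomposition. The efficient fix is simply to discard the prime-power decomposition and apply Lemma \ref{lemformpsi} with $n=60$ directly, as in \eqref{24caseeq} with $24$ replaced by $60$.
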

\subsection{Methods and interpretation}
As in the works of Carlitz and Kuz'min, we use Dirichlet characters and $L$-Functions. This approach dates back to Dirichlet's work on primes in arithmetic progressions. The Dirichlet characters that we use are defined modulo a power of the prime at infinity in $\FF_q(T)$. Concretely, we consider the group $G(R_{\ell,q})$ of characters whose value on $f\in \mathcal{M}_q$ depends only the first $\ell$ next-to-leading coefficients of $f$, see \S\ref{chars} for a precise definition. Lemma \ref{lemformpsi} expresses $f_q(n,t_1,t_2,t_3)$ as an average over $G(R_{3,q})$. 

The main new ingredients in the proofs of Theorems \ref{24period} and \ref{60period} are Lemmas \ref{lfuncroot24} and \ref{artinunity} respectively, which say the following. The roots of the $L$-function $L(u,\chi)$ of a non-trivial character in $G(R_{3,q})$ are of the form $\frac{\omega}{\sqrt{q}}$, where $\omega$ is a root of unity of order dividing $24$ if $q=2^r$, and dividing $60$ if $q=5^r$.

We now explain how Lemma \ref{lfuncroot24} relates to a result of Katz, and complements it. Fix a prime $p$. For any non-trivial $\chi \in G(R_{3,p^r}) \setminus G(R_{2,p^r})$, we can write $L(u,\chi)$ as 
\begin{equation}
L(u,\chi) = \det(I_2-u\sqrt{p^r}\Theta_{\chi}),
\end{equation}
where $\Theta_{\chi} \in U(2)$ is a unitary matrix, or rather a conjugacy class of such a matrix. Katz \cite{katz2013} considered the distribution of the matrices $\{ \Theta_{\chi} \}_{\chi \in G(R_{3,p^r}) \setminus G(R_{2,p^r})}$ in $PU(2) = U(2) / Z(U(2))$ as $r \to \infty$, in the following sense. He showed that if $p \neq 2,5$, then for any continuous function $f\colon U(2) \to \mathbb{C}$, invariant under conjugation and multiplication by scalars, we have
\begin{equation}\label{eq:equi}
\lim_{r \to \infty} \frac{\sum_{\chi\in G(R_{3,p^r})\setminus G(R_{2,p^r})}f(\Theta_{\chi})}{\# G(R_{3,p^r}) \setminus G(R_{2,p^r})} = \int_{U(2)} f(U) \,d U,
\end{equation}
where $dU$ is the unit mass Haar measure on $U(2)$; such a result is known as an \textit{equidistribution result}. For $p=5$, Katz showed that \eqref{eq:equi} fails. For $p=2$ he remarked \cite[Rem.~5.2]{katz2013} that he suspects that \eqref{eq:equi} fails as well. Lemma \ref{lfuncroot24} confirms this suspicion. Indeed, taking $f(U) = |\mathrm{Tr}(U^{24})|^2$, the left hand side of \eqref{eq:equi} is $4$ since $\Theta_{\chi}^{24}=I_2$ for $\chi \in G(R_{3,2^r})\setminus G(R_{2,2^r})$, while the right hand side is $2$ by \cite[Thm.~2.1]{diaconis2001}.

An interpretation of Lemma \ref{lfuncroot24} in terms of supersingularity of certain curves associated with cyclotomic function fields is given in \S\ref{sec:cyclotomic}.

Throughout the paper, we use the following notation. For any $k \in \mathbb{N}_{>0}$ we define 
\begin{equation}
\omega_k = e^{\frac{2\pi i}{k}}.
\end{equation}
For a prime power $q=p^r$, we denote by $\chi_{q}\colon \FF_q \to \mathbb{C}^{\times}$ the additive character defined by
\begin{equation}
\chi_q(a) = \omega_p^{\mathrm{Tr}_{\FF_q/\FF_p}(a)}
\end{equation}
for all $a \in \FF_q$.
\section{Short interval characters}\label{chars}
In $\FF_q(T)$, the prime at infinity is not different from any other prime of degree $1$. In particular, one may define Dirichlet characters modulo a power of the prime at infinity. We call these characters ``short interval characters" and review their properties below. We follow Hayes \cite{hayes1965}, who studied the Dirichlet characters of $\FF_q(T)$ in an elementary manner. 
\subsection{Equivalence relation}\label{erhayes}
For any $\ell \in \mathbb{N}$ we define an equivalence relation $R_{\ell}$ on $\mathcal{M}_q$ by saying that $A \equiv B \bmod R_{\ell}$ if and only if $A$ and $B$ have the same first $\ell$ next-to-leading coefficients. We sometimes write $R_{\ell,q}$ instead of $R_{\ell}$, to emphasize that we work in $\mathcal{M}_q$.

We say that $A \in \mathcal{M}_q$ is invertible modulo $R_{\ell}$ if there exists $B \in \mathcal{M}_q$ such that $AB \equiv 1 \bmod R_{\ell}$. It can be shown that any element of $\mathcal{M}_q$ is invertible modulo $R_{\ell}$.

Consider the set $\mathcal{M}_q / R_{\ell}$ of equivalence classes of $R_{\ell}$. It can be shown that $\mathcal{M}_q / R_{\ell}$ is an abelian group with respect to polynomial multiplication, having as identity element the equivalence class of the polynomial $1$. The group $\mathcal{M}_q / R_{\ell}$ is a $p$-group of order $q^{\ell}$.

We sometimes represent an element $T^n + a_1 T^{n-1} + \ldots + a_n \in \mathcal{M}_q / R_{\ell}$ by an $\ell$-tuple $(a_1, a_2, \ldots, a_{\ell})$. In this notation, multiplication of elements of $\mathcal{M}_q / R_{\ell}$ takes the form
\begin{equation}
(a_1,\ldots, b_{\ell}) (b_1,\ldots, b_{\ell} ) = (a_1 +b_1, a_2 + a_1 b_1 + b_2, \ldots, a_{\ell} +\sum_{i+j=\ell, 1 \le i,j \le \ell - 1} a_i b_j + b_{\ell}).
\end{equation}
The $j$-th coordinate of $(a_1,\ldots, a_{\ell})^k$ is the coefficient of $T^{nk-j}$ in $(T^n+a_1T^{n-1}+a_2 T^{n-2} + \ldots )^k$:
\begin{equation}\label{powermqr}
(a_1,\ldots, a_{\ell})^ k = \left(\binom{k}{1} a_1,  \binom{k}{1} a_2 + \binom{k}{2} a_1^2 ,\binom{k}{1} a_3 + 2\binom{k}{2} a_1 a_2 + \binom{k}{3} a_1^3 , \ldots, \binom{k}{1} a_{\ell} + \ldots \right).
\end{equation}
\subsection{Characters}
For every character $\chi$ of the finite abelian group $\mathcal{M}_q / R_{\ell}$, we define $\chi^{\dagger}$ with domain $\mathcal{M}_q$ as follows: For a given $A \in \mathcal{M}_q$, if $\mathfrak{c}$ is the equivalence class of $A$, then 
\begin{equation}
\chi^{\dagger}(A)=\chi(\mathfrak{c}).
\end{equation}
The functions $\chi^{\dagger}$ defined in this way are called the characters of the relation $R_{\ell}$, or simply characters modulo $R_{\ell}$. We denote by $G(R_{\ell})$ the set containing them:
\begin{equation}
G(R_{\ell}) = \{\chi^{\dagger} : \chi\in \widehat{\mathcal{M}_q / R_{\ell}}\}.
\end{equation}
We will often abuse notation and write $\chi$ instead of $\chi^{\dagger}$. In particular, we denote by $\chi_0$ the trivial character in $G(R_{\ell})$.

Assume that $\ell >0$. A character modulo $R_{\ell}$ is said to be ``primitive modulo $R_{\ell}$", or just ``primitive", if $\chi \notin G(R_{\ell -1})$.

We have a natural restriction map from $\mathcal{M}_q / R_{\ell}$ to $\mathcal{M}_q / R_{\ell-1}$, which in particular implies that $G(R_{\ell-1})$ may be considered as a subgroup of $G(R_{\ell})$.

If $n \ge \ell$ and $\chi \in G(R_{\ell})$, then a consequence of the orthogonality relations in $\mathcal{M}_q / R_{\ell}$ is
\begin{equation}\label{orthouse}
\frac{1}{q^{n}} \sum_{F \in \mathcal{M}_{n,q}} \chi(F) = \begin{cases} 0 & \text{if }\chi \neq \chi_0, \\ 1 & \text{if }\chi = \chi_0. \end{cases}
\end{equation}
\subsection{\texorpdfstring{$L$}{\textit{L}}-functions}\label{seclfunc}
Let $\chi$ be a character of $G(R_{\ell})$. The $L$-function of $\chi$ is the following power series in $u$:
\begin{equation}
L(u,\chi) = \sum_{f \in \mathcal{M}_q} \chi(f)u^{\deg f},
\end{equation}
which also admits the Euler product
\begin{equation}
L(u,\chi) = \prod_{P \in \mathcal{M}_q} (1-\chi(P)u^{\deg P})^{-1}.
\end{equation}
If $\chi$ is the trivial character $\chi_0$, then
\begin{equation}\label{trivl}
L(u,\chi) = \frac{1}{1-qu}.
\end{equation}
Otherwise, \eqref{orthouse} implies that $L(u,\chi)$ is a polynomial in $u$ of degree at most $\ell-1$.

Weil's proof of the Riemann Hypothesis for Function Fields \cite[Thm.~6,~p.~134]{weil1974} implies the Riemann Hypothesis for the $L$-functions of $\chi \in G(R_{\ell})$, as shown explicitly by Rhin \cite[Thm.~3]{rhin1972} in his thesis (cf. \cite[Thm.~5.6]{effinger1991} and the discussion following it). Hence we know that if we factor $L(u,\chi)$ as
\begin{equation}\label{ldecomproot}
L(u,\chi) = \prod_{i=1}^{\deg L(u,\chi)} (1-\gamma_i(\chi)u),
\end{equation}
then the $\gamma_i(\chi)$-s are $q$-Weil numbers of weight 1, i.e. they are algebraic numbers such that
\begin{equation}\label{absweight1}
\left| \gamma_i(\chi) \right| = \sqrt{q},
\end{equation}
and \eqref{absweight1} is true for the conjugates of $\gamma_i(\chi)$ as well. The functional equation for $L(u,\chi)$ implies that if $\chi$ is primitive modulo $R_{\ell}$ then 
\begin{equation}
\deg L(u,\chi)  = \ell-1.
\end{equation}
We use the notation $\gamma_i(\chi)$ for the inverse roots of $L(u,\chi)$ throughout this paper.

\subsection{Lifts of characters}
Let $\sigma\colon \FF_{q^r}\to \FF_{q^r}$ be the Frobenius automorphism $\sigma(\alpha)=\alpha^{q}$, which generates $\text{Gal}(\FF_{q^r}/ \FF_q)$. We extend $\sigma$ to an automorphism of $\FF_{q^r}[T]$ by $\sigma(\sum_i a_iT^i) = \sum_i \sigma(a_i)T^i$. For every $A \in \FF_{q^r}[T]$, the polynomial
\begin{equation}
N^{(r)}(A)=\prod_{i=1}^{r} \sigma^{i}(A)
\end{equation}
is called the norm of $A$ relative to $\FF_q[T]$. As explained in \cite[Thm.~2.2]{hayes1965} for example, $N^{(r)}(A)$ is a polynomial in $\FF_q[T]$. In addition, the map $A \mapsto N^{(r)}(A)$ is multiplicative. It is also true that if $A \equiv B \bmod R_{\ell,q^r}$ then $N^{(r)}(A) \equiv N^{(r)}(B) \bmod R_{\ell,q}$, i.e. the $\ell$ next-to-leading coefficients of the norm of $A$ depend only on the $\ell$ next-to-leading coefficients of $A$.
If $\chi \in G(R_{\ell,q})$, then for every $A \in \FF_{q^r}[T]$ we define 
\begin{equation}
\chi^{(r)}(A)=\chi(N^{(r)}(A)).
\end{equation} 
For every character $\chi$ of $R_{\ell,q}$, $\chi^{(r)}$ is a character of $R_{\ell,q^r}$. The map $\chi \mapsto \chi^{(r)}$ is a homomorphism from $G(R_{\ell,q})$ to $G(R_{\ell,q^r})$ \cite[Thm.~5.2]{hayes1965}.
By \cite[Thm.~9.2]{hayes1965}, if $
L(u,\chi) = \prod_{i=1}^{\deg L(u,\chi)} (1-\gamma_i(\chi)u)$ then
\begin{equation}\label{liftlform}
L(u,\chi^{(r)}) = \prod_{i=1}^{\deg L(u,\chi)} (1-\gamma_i(\chi)^r u).
\end{equation}
The character $\chi^{(r)}$ is called a lift of $\chi$ to $\FF_{q^r}$.
\subsection{Formula for \texorpdfstring{$\psi_q$}{psi q}}
The following lemma gives a classical exact formula for $\psi_q$ \cite[Thms.~6.1,~9.2]{hayes1965}.
\begin{lem}\label{lemformpsi}
Let $n \in \mathbb{N}_{>0}$ and $\ell \in \mathbb{N}$. Let $t_1,\ldots, t_{\ell} \in \FF_q$. Set
\begin{equation}
A = (t_1, t_2, \ldots, t_{\ell}) \in \mathcal{M}_{q} / R_{\ell}.
\end{equation}
For any $\chi_0 \neq \chi \in G(R_{\ell})$, let $\{ \gamma_i(\chi) \}_{i=1}^{\deg L(u,\chi)}$ be the inverse roots of $L(u,\chi)$. Then
\begin{equation}\label{psiformsumchi}
\psi_q(n,t_1,\ldots,t_{\ell}) =q^{n-\ell} - \frac{1}{q^{\ell}} \sum_{\chi_0 \neq \chi \in G(R_{\ell})} \overline{\chi}(A) \sum_{i=1}^{\deg L(u,\chi)} \gamma_i^n(\chi). 
\end{equation}
\end{lem}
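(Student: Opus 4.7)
The plan is to combine orthogonality of characters on $\mathcal{M}_q/R_\ell$ with the standard logarithmic derivative identity for the $L$-functions $L(u,\chi)$ already introduced in \S\ref{seclfunc}. This is the function-field analogue of the Dirichlet/Mertens approach to counting primes in arithmetic progressions, so the argument is essentially forced, but let me record the steps.

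First, assume $n \ge \ell$ (the complementary case $n<\ell$ will be handled afterwards by direct inspection). By definition, $\psi_q(n,t_1,\ldots,t_\ell)$ is the sum of $\Lambda_q(f)$ over $f \in \mathcal{M}_{n,q}$ whose equivalence class modulo $R_\ell$ is exactly $A$. Since $\mathcal{M}_q/R_\ell$ is a finite abelian group of order $q^\ell$, orthogonality of its characters applied to the class of $f$ against the class of $A$ gives the indicator
\begin{equation}
\mathbf{1}_{f \equiv A \bmod R_\ell} = \frac{1}{q^\ell}\sum_{\chi \in G(R_\ell)} \overline{\chi}(A)\,\chi(f),
\end{equation}
valid for every $f \in \mathcal{M}_{n,q}$. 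Summing against $\Lambda_q(f)$ and interchanging the two sums yields
\begin{equation}
\psi_q(n,t_1,\ldots,t_\ell) = \frac{1}{q^\ell}\sum_{\chi \in G(R_\ell)} \overline{\chi}(A) \,\psi_q(n,\chi), \qquad \psi_q(n,\chi):=\sum_{f \in \mathcal{M}_{n,q}}\Lambda_q(f)\chi(f).
\end{equation}

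Second, I would evaluate $\psi_q(n,\chi)$ via the logarithmic derivative of $L(u,\chi)$. Taking the logarithm of the Euler product and expanding $-\log(1-\chi(P)u^{\deg P})$ as a geometric series in the usual way, one obtains
\begin{equation}
u\frac{L'(u,\chi)}{L(u,\chi)} = \sum_{n\ge 1}\psi_q(n,\chi)\,u^n.
\end{equation}
For the trivial character, \eqref{trivl} gives $u L'/L = \sum_{n\ge 1} q^n u^n$, recovering $\psi_q(n,\chi_0)=q^n=\psi_q(n)$; this contributes $q^{n-\ell}$ after dividing by $q^\ell$. For $\chi \neq \chi_0$, the factorization \eqref{ldecomproot} yields
\begin{equation}
u\frac{L'(u,\chi)}{L(u,\chi)} = -\sum_{i=1}^{\deg L(u,\chi)}\frac{\gamma_i(\chi) u}{1-\gamma_i(\chi) u} = -\sum_{n\ge 1}\Bigl(\sum_{i=1}^{\deg L(u,\chi)}\gamma_i(\chi)^n\Bigr) u^n,
\end{equation}
so comparing coefficients gives $\psi_q(n,\chi) = -\sum_i \gamma_i(\chi)^n$. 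Substituting back yields exactly \eqref{psiformsumchi}.

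Finally, for $n<\ell$, the convention that absent coefficients are zero and the fact that $\mathcal{M}_{n,q}$ is small force $\psi_q(n,t_1,\ldots,t_\ell)$ to equal $\psi_q(n,t_1,\ldots,t_n)$ when $t_{n+1}=\ldots=t_\ell=0$ and to vanish otherwise; the right-hand side of \eqref{psiformsumchi} must be checked to agree with this, which can be done by a short, direct orthogonality argument at the group level (restricting characters along $G(R_n)\hookrightarrow G(R_\ell)$ via the natural quotient discussed at the end of \S\ref{chars}). There is no real obstacle in the proof; the only points that require a modicum of care are (i) justifying the interchange of summations, which is fine because $\mathcal{M}_{n,q}$ and $G(R_\ell)$ are both finite, and (ii) treating the edge case $n<\ell$ separately as just described.
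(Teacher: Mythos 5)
Your proof is correct and is exactly the classical argument (character orthogonality on $\mathcal{M}_q/R_{\ell}$ combined with the logarithmic derivative of $L(u,\chi)$) that the paper outsources to Hayes rather than writing out. One small remark: the case split at $n<\ell$ is unnecessary, since the pointwise identity $\mathbf{1}_{f\equiv A \bmod R_{\ell}}=q^{-\ell}\sum_{\chi}\overline{\chi}(A)\chi(f)$ and the power-series identity for $u L'/L$ both hold for every $n\ge 1$, so the uniform computation already yields \eqref{psiformsumchi} in all cases.
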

Lemma \ref{lemformpsi} implies that for fixed $\ell$ and $q$, only a finite collection of complex numbers governs the behavior of the function $\psi_q(n,t_1,\ldots,t_{\ell})$, namely the multiset $\{ \gamma_{i}(\chi) \}_{\chi \in G(R_{\ell,q}) \setminus \{ \chi_0\},\, 1\le  i \le \deg L(u,\chi) }$. 
For instance, the formulas for $\psi_2(n,t_1,t_2,t_3)$ given in \cite[Thm.~4]{yucas2004} and \cite[Thm.~3.4]{fitzgerald2003} may be recovered easily by a computation of $10$ complex numbers: two inverse roots for each $\chi \in G(R_3) / G(R_2)$ and one inverse root for each $\chi \in G(R_2) / G(R_1)$.
\begin{remark}
From Lemma \ref{lemformpsi} and the Riemann Hypothesis for $\chi \in G(R_{\ell})$, \eqref{eq:psiest} is obtained.
\end{remark}
\subsection{The groups \texorpdfstring{$\mathcal{M}_q/R_{\ell}$}{Mq R l} and \texorpdfstring{$G(R_{\ell})$}{G(R l)}}
Let $q=p^r$ be a prime power. The groups $\mathcal{M}_q/R_{\ell}$ and $G(R_{\ell})$ are abelian $p$-groups of order $q^{\ell}$. They are isomorphic since $G(R_{\ell})$ is the character group of $\mathcal{M}_q/R_{\ell}$.

The decomposition of $\mathcal{M}_q/R_{\ell}$ as a product of cyclic $p$-groups is known, see for instance Katz \cite[Sec.~2]{katz2013} or Fomenko \cite[p.~276]{fomenko1996}. We only need the following simpler result.
\begin{lem}\label{structlem}
Let $q=p^r$ be a prime power and $\ell \ge 1$. The exponent $e_q$ of $\mathcal{M}_q/R_{\ell} \cong G(R_{\ell})$ is 
\begin{equation}
e_q = p^{\lceil \log_p (\ell+1) \rceil}.
\end{equation}
\end{lem}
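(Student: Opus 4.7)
The plan is to determine $e_q$ by bounding it from above and below. Since $\mathcal{M}_q/R_\ell$ is a $p$-group, its exponent is a power of $p$, so it suffices to show that the smallest $k$ with $A^{p^k}=1$ for every $A$ equals $\lceil \log_p(\ell+1) \rceil$. Both bounds will rest on the same elementary observation: in characteristic $p$ the Frobenius yields
\[
\left(\sum_j a_j T^j\right)^{p^k} = \sum_j a_j^{p^k} T^{jp^k},
\]
so the nonzero coefficients of any $p^k$-th power are concentrated at indices divisible by $p^k$.

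For the upper bound, I take any class $A=[f]$ with $f \in \mathcal{M}_{n,q}$ for some $n\ge \ell$. By the Frobenius identity above, the $\ell$ coefficients of $f^{p^k}$ in positions $np^k-1, np^k-2, \ldots, np^k-\ell$ all vanish as soon as none of $1, 2, \ldots, \ell$ is a multiple of $p^k$, that is, as soon as $p^k > \ell$. Choosing $k=\lceil \log_p(\ell+1) \rceil$ gives $p^k \ge \ell+1$, whence $A^{p^k}=1$ for every $A$, so $e_q \mid p^k$.

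For the matching lower bound, I exhibit an element of order exactly $p^k$. Take $A$ to be the class of $T+1$, i.e.\ $A=(1,0,\ldots,0)$ in the coordinate notation of \eqref{powermqr}. Then for any $j\ge 0$, the Frobenius computation gives
\[
A^{p^{j}} = [(T+1)^{p^{j}}] = [T^{p^{j}} + 1],
\]
and the polynomial $T^{p^j}+1$ has its constant term $1$ sitting as its $p^j$-th next-to-leading coefficient. By the definition of $k$ we have $p^{k-1} \le \ell$, so taking $j=k-1$ places this nonvanishing coefficient among the first $\ell$ next-to-leading coefficients, and hence $A^{p^{k-1}} \neq 1$ in $\mathcal{M}_q/R_\ell$. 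In a $p$-group this forces the order of $A$ to be at least $p^k$.

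Combining the two bounds, $e_q=p^{\lceil \log_p(\ell+1)\rceil}$, as claimed. I do not anticipate any real obstacle here: both directions hinge on the threshold $p^k > \ell$, and the only bookkeeping subtlety is remembering the convention that the $j$-th next-to-leading coefficient is zero for $j>\deg f$, which is what lets the constant term of $T^{p^{k-1}}+1$ be interpreted as its $p^{k-1}$-th next-to-leading coefficient.
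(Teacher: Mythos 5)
Your proof is correct and follows essentially the same route as the paper: the Frobenius identity gives the upper bound $p^k \ge \ell+1 \Rightarrow A^{p^k}=1$, and the element $T+1$ with $(T+1)^{p^{k-1}} = T^{p^{k-1}}+1 \not\equiv 1 \bmod R_\ell$ supplies the matching lower bound, exactly as in the paper's proof.
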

\begin{proof} 
The exponent of $\mathcal{M}_q/R_{\ell}$ is necessarily a power of $p$. For $A \in \mathcal{M}_q / R_{\ell}$, the power $A^{p^t}$ has first $p^t-1$ next-to-leading coefficients equal to zero. Hence, a sufficient condition for $A^{p^t} = 1$ for all $A$ is 
\begin{equation}\label{expcond}
p^t -1 \ge \ell.
\end{equation}
Let $t$ be the smallest integer satisfying \eqref{expcond}, that is, $t = \lceil \log_p(\ell+1) \rceil$. Then $(T+1)^{p^{t-1}} = T^{p^{t-1}}+1 \neq 1 \bmod R_{\ell}$, which proves that $p^t$ is the exponent of the group.
\end{proof}
When $p > \ell$, the $L$-functions of the characters in $G(R_{\ell,q})$ have a simple description given in the following lemma. The $L$-functions are essentially $L$-functions of Artin-Schreier curves
\begin{lem}\label{aslem}
Let $q=p^r$ be a prime power. Let $\ell$ be a positive integer such that $p>\ell$. Let $\chi \in G(R_{\ell,q})$. Then there exist unique constants $\lambda_1, \ldots, \lambda_{\ell} \in \FF_q$ such that 
\begin{equation}\label{charstruct}
\chi(a_1,\ldots,a_{\ell}) = \chi_{q}\left( \sum_{j=1}^{\ell} \lambda_j \cdot [T^j] \log(1+a_1 T + a_2 T^2 + \ldots + a_{\ell} T^{\ell}) \right)
\end{equation}
for all $a_i \in \FF_q$. Conversely, any function on $\mathcal{M}_q / R_{\ell}$ defined as in \eqref{charstruct} is an element of $G(R_{\ell,q})$.

Here $[T^j] \log(1+a_1 T + a_2 T^2 + \ldots + a_{\ell} T^{\ell})$ denotes the coefficient of $T^j$ in the formal power series $\log(1+a_1 T + a_2 T^2 + \ldots + a_{\ell}T^{\ell})$. As $j \le \ell < p$, this coefficient is well defined. 
\end{lem}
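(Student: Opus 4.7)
The plan is to produce an explicit group isomorphism between $\mathcal{M}_q/R_{\ell,q}$ and the additive group $(\FF_q^{\ell},+)$ via the formal logarithm, and then to read off its characters from the standard duality for $(\FF_q^{\ell},+)$.

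First I would define the map
\[
L\colon \mathcal{M}_q/R_{\ell,q} \to \FF_q^{\ell},\qquad L\bigl((a_1,\ldots,a_{\ell})\bigr) = \bigl([T^j]\log(1+a_1T+\ldots+a_{\ell}T^{\ell})\bigr)_{1\le j\le \ell}.
\]
The hypothesis $p>\ell$ ensures that each coordinate is a well-defined element of $\FF_q$: expanding $\log(1+x)=\sum_{k\ge 1}(-1)^{k+1}x^k/k$ and extracting $T^j$ for $j\le\ell$ uses only denominators $k\le j\le\ell<p$, all invertible in $\FF_q$. Writing out the expansion shows $[T^j]\log = a_j + P_j(a_1,\ldots,a_{j-1})$ for some polynomial $P_j$ with coefficients in $\FF_p$, so $L$ is triangular with $1$'s on the diagonal, hence injective.

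Next I would check that $L$ is a group homomorphism. The excerpt already identifies the multiplication in $\mathcal{M}_q/R_{\ell,q}$ with the truncated product of power series $(1+\sum_j a_jT^j)(1+\sum_j b_jT^j)\bmod T^{\ell+1}$. The identity $\log(fg)=\log f+\log g$ for $f,g\equiv 1\pmod T$ holds in $\mathbb{Q}[[T]]$; the coefficients of $T^1,\ldots,T^{\ell}$ on both sides lie in the localization $\mathbb{Z}_{(p)}$ since every denominator that occurs is at most $\ell<p$, so reducing modulo $p$ and extending scalars to $\FF_q$ gives the same identity in $\FF_q[T]/(T^{\ell+1})$. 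Thus $L(A\cdot B)=L(A)+L(B)$. Since $|\mathcal{M}_q/R_{\ell,q}|=|\FF_q^{\ell}|=q^{\ell}$, the injective homomorphism $L$ is an isomorphism.

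Finally I would invoke the standard description of the dual of $(\FF_q^{\ell},+)$: every character is uniquely of the form $(c_1,\ldots,c_{\ell})\mapsto \chi_q(\sum_j \lambda_j c_j)$ for some $(\lambda_1,\ldots,\lambda_{\ell})\in\FF_q^{\ell}$, because $\chi_q$ is a non-trivial additive character on the self-dual group $(\FF_q,+)$ and characters of a direct product factor through each coordinate. Pulling this back along $L$ yields exactly \eqref{charstruct}, with the $\lambda_j$ unique and with every such formula defining a character. The only delicate point is the descent of the logarithm identity from $\mathbb{Q}[[T]]$ to $\FF_q[T]/(T^{\ell+1})$, which is precisely what the hypothesis $p>\ell$ secures.
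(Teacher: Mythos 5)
Your proof is correct and takes essentially the same approach as the paper: both identify $\mathcal{M}_q/R_{\ell}$ with $(\FF_q^{\ell},+)$ via the truncated logarithm (well defined since $\ell<p$) and then quote the standard description of the characters of $\FF_q^{\ell}$. The only cosmetic difference is that you establish bijectivity by triangularity plus a cardinality count, whereas the paper exhibits the truncated exponential as an explicit inverse.
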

This lemma is implicit in the works of Katz and Fomenko mentioned above. We supply a proof for completeness.
\begin{proof}
The characters of $\FF_q^{\ell}$ are given by $(b_1,\ldots,b_{\ell}) \mapsto \chi_q(\sum_{j=1}^{\ell} \lambda _j b_j)$ for $\lambda_i \in \FF_q$. The lemma is deduced by observing that $\mathcal{M}_q/R_{\ell}$ is isomorphic to $\{ \sum_{i=1}^{\ell} c_i T^i : c_i \in \FF_q\} \cong \FF_q^{\ell}$ under
\begin{equation}\label{eq:homom}
T^{\ell}+a_1 T^{\ell-1} + \ldots + a_{\ell}  \mapsto \log(1+a_1T+\ldots+a_{\ell}T^{\ell}) \bmod T^{\ell+1},
\end{equation}
where $\log(1+a_1T+\ldots+a_{\ell}T^{\ell}) \bmod T^{\ell+1}$ is a truncation of $\log$, well defined since $j \le \ell <p$. The fact that \eqref{eq:homom} is an homomorphism follows from the fact that log turns multiplication into addition. The fact that it is an isomorphism follows from the fact that we have an explicit inverse, given by (a truncation of) the exponential map.
\end{proof}

\section{Periodicity and symmetry for \texorpdfstring{$p=2$}{p=2}, \texorpdfstring{$\ell=3$}{l=3}}
\subsection{Symmetries of sums involving roots of unity}
The following lemma describes some symmetries of expressions involving roots of unity.
\begin{lem}\label{symrootlior}
Let $m_1$, $m_2$ be positive integers with $m_2 \mid m_1$. Let $F_1,\ldots, F_{m_1} \in \mathbb{Q}[x]$. Let $S\colon \mathbb{Z}^2 \to \mathbb{C}$ be the function defined by
\begin{equation}
S(k_1,k_2) = \sum_{i=1}^{m_1} \omega_{m_1}^{i k_1} F_i(\omega_{m_2}^{k_2}).
\end{equation}
Assume that, for all $k_1, k_2 \in \mathbb{Z}$,
\begin{equation}\label{ratsum}
S(k_1,k_2) \in \mathbb{Q}.
\end{equation}
Then the following hold.
\begin{enumerate}
\item The function $S(k_1,k_2)$ is periodic in $k_1$ with period $m_1$ and in $k_2$ with period $m_2$.
\item For any $\lambda \in \mathbb{Z}$ such that $(\lambda,m_1)=1$, we have
\begin{equation}
S(k_1, k_2)=S(\lambda  k_1, \lambda  k_2).
\end{equation}
\item Let $k_1, k_2 \in \mathbb{Z}$. Let $d$ be any integer such that $d \equiv (\frac{k_1}{(k_1,m_1)})^{-1} \bmod \frac{m_1}{(k_1,m_1)}$ and $(d,m_1)=1$. Then
\begin{equation}
S(k_1,k_2) = S( (k_1,m_1), d  k_2).
\end{equation}
\end{enumerate}
\end{lem}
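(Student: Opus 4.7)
The plan is to deduce all three parts from basic Galois theory applied to the cyclotomic field $\mathbb{Q}(\omega_{m_1})$, which contains $\omega_{m_2}$ since $m_2 \mid m_1$.

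\textbf{Part 1.} I would handle this first and directly: since $\omega_{m_1}^{m_1}=1$, replacing $k_1$ by $k_1+m_1$ leaves every term $\omega_{m_1}^{ik_1}F_i(\omega_{m_2}^{k_2})$ unchanged; similarly $\omega_{m_2}^{m_2}=1$ gives periodicity in $k_2$ with period $m_2$. The hypothesis \eqref{ratsum} plays no role here.

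\textbf{Part 2.} This is where the rationality hypothesis enters. For $\lambda$ coprime to $m_1$, let $\sigma_\lambda \in \mathrm{Gal}(\mathbb{Q}(\omega_{m_1})/\mathbb{Q})$ be the automorphism determined by $\sigma_\lambda(\omega_{m_1})=\omega_{m_1}^\lambda$. Because $m_2 \mid m_1$, we have $\omega_{m_2}=\omega_{m_1}^{m_1/m_2}$, and hence $\sigma_\lambda(\omega_{m_2})=\omega_{m_2}^\lambda$. Since the coefficients of each $F_i$ lie in $\mathbb{Q}$, applying $\sigma_\lambda$ termwise to the defining sum yields
\begin{equation}
\sigma_\lambda\bigl(S(k_1,k_2)\bigr) = \sum_{i=1}^{m_1} \omega_{m_1}^{i\lambda k_1} F_i(\omega_{m_2}^{\lambda k_2}) = S(\lambda k_1, \lambda k_2).
\end{equation}
By hypothesis \eqref{ratsum}, $S(k_1,k_2)\in\mathbb{Q}$, so $\sigma_\lambda$ fixes it, giving the desired identity $S(k_1,k_2)=S(\lambda k_1,\lambda k_2)$.

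\textbf{Part 3.} This should follow by combining the first two parts. Set $g=(k_1,m_1)$ and write $k_1=gk_1'$ with $(k_1',m_1/g)=1$. For $d$ as in the hypothesis, part 2 (applied with $\lambda=d$, which is allowed since $(d,m_1)=1$) yields
\begin{equation}
S(k_1,k_2)=S(dk_1,dk_2)=S(gdk_1',dk_2).
\end{equation}
From $dk_1'\equiv 1 \pmod{m_1/g}$ we get $gdk_1'\equiv g \pmod{m_1}$, so by the periodicity in the first argument established in part 1, $S(gdk_1',dk_2)=S(g,dk_2)$. This chains to $S(k_1,k_2)=S((k_1,m_1),dk_2)$, as required.

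No step presents a real obstacle; the only subtlety is to confirm that $\sigma_\lambda$ acts on $\omega_{m_2}$ in the expected way, which is exactly why the divisibility hypothesis $m_2\mid m_1$ is needed so that both roots of unity lie in the same cyclotomic field and transform compatibly under a single Galois element.
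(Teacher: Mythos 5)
Your proof is correct and follows essentially the same route as the paper: part 1 is immediate from the orders of the roots of unity, part 2 applies the Galois automorphism $\sigma_\lambda$ of $\mathbb{Q}(\omega_{m_1})$ and uses the rationality hypothesis to conclude invariance, and part 3 combines part 2 with $\lambda=d$ and the periodicity in the first argument. The only difference is that you spell out the details (the action of $\sigma_\lambda$ on $\omega_{m_2}$ and the congruence $gdk_1'\equiv g \bmod m_1$) that the paper leaves implicit.
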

\begin{proof}
The first part is trivial. For the second part, let $\sigma_{\lambda}\colon \mathbb{Q}(\omega_{m_1}) \to \mathbb{Q}(\omega_{m_1})$ be the field automorphism fixing $\mathbb{Q}$ defined by
\begin{equation}
\sigma_{\lambda}(\omega_{m_1}) = \omega_{m_1}^{\lambda}.
\end{equation}
We have 
\begin{equation}\label{galact}
\sigma_{\lambda}(S(k_1, k_2))=S(\lambda  k_1, \lambda  k_2). 
\end{equation}
From \eqref{ratsum} and \eqref{galact}, we deduce the second part of the lemma. To establish the third part, choose in the second part $\lambda=d$ and use the periodicity of $S(k_1,k_2)$ in $k_1$. 
\end{proof}

\subsection{Periodicity and symmetry for \texorpdfstring{$\ell =1,2$}{ell =1,2}}\label{periodsmallell}
We first consider $\psi_q(n,t_1,\ldots,t_{\ell})$ for $\ell=1$. Let $q=p^r$ be a prime power and let $\chi \in G(R_{1})$ be a non-trivial character. Then $\deg L(u,\chi) \le 1-1$. Hence
\begin{equation}
L(u,\chi) = 1.
\end{equation}
In particular, $L(u,\chi)$ has no roots. Applying Lemma \ref{lemformpsi} with $\ell =1$, we obtain that
\begin{equation}\label{eq:NewCarlitz}
\psi_q(n,t_1) = q^{n-1},
\end{equation}
a result first proved by Carlitz \cite[Eq.~6.1]{carlitz1952}. In particular, $\frac{\psi_q(n,t_1)-q^{n-1}}{q^{n/2}}$ has period $1$ in $n$ for any $t_1 \in \FF_q$.

In Proposition \ref{sym2} we prove some periodicity and symmetry properties of $\psi_q(n,t_1,t_2)$. Although these properties follow from the exact formulas for $\psi_q(n,t_1,t_2)$ obtained by Kuz'min, we take a different route. We do this because similar ideas are used in the proof of Theorem \ref{24period}, which is a natural sequel to Proposition \ref{sym2}.
\begin{proposition}\label{sym2}
Let $q=p^r$ be a prime power. Let $t_1, t_2 \in \FF_q$ and define
\begin{equation}
f_q(n,t_1,t_2) = \frac{\psi_q(n,t_1,t_2)-q^{n-2}}{q^{n/2}}.
\end{equation}
Then the following hold.
\begin{enumerate}
\item The function $f_q$ may be written as
\begin{equation}
f_q(n,t_1,t_2) = \frac{-1}{q^2}\sum_{\chi \in G(R_2) \setminus G(R_1)} \overline{\chi}(t_1,t_2) \left( \frac{\gamma_1(\chi)}{\sqrt{q}} \right)^n,
\end{equation}
where
\begin{equation}
\gamma_1(\chi) = -\sum_{a \in \FF_q}\chi(T+a).
\end{equation}
\item Let 
\begin{equation}
o_q =\begin{cases} 4p & \mbox{if $2 \nmid r$ and $p \not\equiv 1 \bmod 4$}, \\ 2p & \mbox{otherwise.} \end{cases} 
\end{equation}
Then $\frac{\gamma_1(\chi)}{\sqrt{q}}$ is a root of unity of order dividing $o_q$. Moreover, if $o_q=4p$ then the order of $\frac{\gamma_1(\chi)}{\sqrt{q}}$ is divisible by $(o_q,8)$.
\item The function $f_q(n,t_1,t_2)$ has period $o_q$ in $n\in \mathbb{N}_{>0}$. Additionally, the following symmetries hold. Let $n \in \mathbb{N}_{>0}$. If $\lambda \in \mathbb{N}_{>0}$ satisfies $(\lambda,2p)=1$, then
\begin{equation}
f_q(n,t_1,t_2) = f_q\left(\lambda  n, \lambda t_1, \lambda t_2 + \binom{\lambda}{2} t_1^2\right),
\end{equation}
and if $d\in \mathbb{Z}$ satisfies $(d,2p)=1$ and $d \equiv (\frac{n}{(n,o_q)})^{-1} \bmod \frac{o_q}{(n,o_q)}$, then
\begin{equation}
f_q(n, t_1,t_2) = f_q\left( (n,o_q), d t_1, dt_2 + \binom{d}{2}t_1^2\right).
\end{equation}
\end{enumerate}
\end{proposition}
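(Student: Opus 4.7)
The plan is to proceed in three steps, one for each part. For part (1), I apply Lemma \ref{lemformpsi} with $\ell = 2$ and split the sum over $G(R_2) \setminus \{\chi_0\}$ by primitivity. Any non-trivial character $\chi \in G(R_1) \setminus \{\chi_0\}$ has $L(u, \chi) = 1$ (degree at most $0$) and contributes no inverse root. For a primitive character $\chi \in G(R_2) \setminus G(R_1)$, the functional equation forces $\deg L(u, \chi) = 1$, and reading off the coefficient of $u$ in $L(u, \chi) = \sum_f \chi(f) u^{\deg f}$ gives $\gamma_1(\chi) = -\sum_{a \in \FF_q} \chi(T + a)$.

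For part (2), I evaluate this Gauss sum in two cases. When $p$ is odd, Lemma \ref{aslem} parametrizes each primitive $\chi$ by $(\mu_1, \mu_2) \in \FF_q \times \FF_q^{\times}$ via the $\log$ identification, so $\gamma_1(\chi) = -\sum_a \chi_q(\mu_1 a - \mu_2 a^2/2)$. Completing the square expresses this as a $p$-th root of unity times $\eta(-\mu_2/2)\, g(\eta)$, where $\eta$ is the quadratic character on $\FF_q^{\times}$ and $g(\eta)^2 = \eta(-1) q$. The $2p$-th power kills every $p$-th root of unity and yields $(\gamma_1/\sqrt q)^{2p} = \eta(-1)$, which equals $1$ precisely when $q \equiv 1 \bmod 4$; this matches the two branches of $o_q$. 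When $p = 2$, Lemma \ref{aslem} is unavailable, so I instead use the group law of $\mathcal M_q / R_2$ directly: primitivity forces the decomposition $\chi(a_1, a_2) = \phi(a_1)\psi(a_2)$ with $\psi = \chi_q(\lambda \cdot)$ non-trivial and $\phi$ satisfying the twisted cocycle $\phi(a+b) = \phi(a)\phi(b)\psi(ab)$ (visible from $(a+b,0) = (a,0)(b,0)(0,ab)$ in characteristic $2$). Squaring the formula $\gamma_1(\chi) = -\sum_a \phi(a)$ and using $\mathrm{Tr}(x^2) = \mathrm{Tr}(x)$ collapses the inner sum to a single point and gives $\gamma_1(\chi)^2 = q\,\phi(\lambda^{-1/2})$; iterating with $\phi(a)^2 = \overline{\psi(a^2)}$ then yields $\gamma_1(\chi)^4 = (-1)^r q^2$, confirming $(\gamma_1/\sqrt q)^{o_q} = 1$. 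The ``moreover'' divisibility when $o_q = 4p$ is a parity check inside these computations.

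For part (3), periodicity in $n$ with period $o_q$ is immediate from $\alpha(\chi)^{o_q} = 1$, where $\alpha(\chi) := \gamma_1(\chi)/\sqrt q$. The two symmetries follow by applying Lemma \ref{symrootlior} to
\[
f_q(n, t_1, t_2) = -\frac{1}{q^2}\sum_{\chi \in G(R_2) \setminus G(R_1)} \overline{\chi}(t_1, t_2)\, \alpha(\chi)^n.
\]
The crucial compatibility is provided by \eqref{powermqr}: $(t_1, t_2)^\lambda = (\lambda t_1, \lambda t_2 + \binom{\lambda}{2} t_1^2)$, so that the cyclotomic Galois automorphism $\sigma_\lambda$ simultaneously sends $\overline\chi(t_1, t_2)$ to $\overline\chi(\lambda t_1, \lambda t_2 + \binom{\lambda}{2} t_1^2)$ and $\alpha(\chi)^n$ to $\alpha(\chi)^{\lambda n}$. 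Parts (2) and (3) of Lemma \ref{symrootlior} then encode exactly the two displayed symmetries.

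The main technical obstacle is the $p = 2$ case of part (2): Lemma \ref{aslem} fails since $p \not> \ell = 2$, so the classical Gauss sum toolkit must be replaced by a direct computation using the twisted-cocycle structure of primitive characters and the Frobenius-stability of the trace in characteristic $2$.
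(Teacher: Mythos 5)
Your parts (1) and (3) follow the paper's route exactly (Lemma \ref{lemformpsi} with $\ell=2$ for the formula, Lemma \ref{symrootlior} for periodicity and the symmetries), but your part (2) is a genuinely different argument. The paper never evaluates the Gauss sum: it proves $|\sigma(\gamma_1(\chi))|=\sqrt q$ for all Galois conjugates by a direct manipulation of the character sum, divides by $g_p^r$, and uses the product formula together with Kronecker's theorem to conclude that $\gamma_1(\chi)/g_p^r$ is a root of unity in $\QQ(\omega_{e_q})$; the order bound then falls out of the known value of $g_p$. You instead compute $\gamma_1(\chi)$ explicitly: for odd $p$ via Lemma \ref{aslem}, completion of the square and $g(\eta)^2=\eta(-1)q$ for the quadratic character $\eta$, which gives $(\gamma_1(\chi)/\sqrt q)^{2p}=\eta(-1)$ and matches the two branches of $o_q$; and for $p=2$ via the identity $\phi(a+b)=\phi(a)\phi(b)\psi(ab)$ and $\mathrm{Tr}(x^2)=\mathrm{Tr}(x)$, yielding $\gamma_1(\chi)^2=q\,\phi(\lambda^{-1/2})$ and $\gamma_1(\chi)^4=(-1)^rq^2$. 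I checked these computations and they are correct, including the ``moreover'' clause (an element whose $2p$-th, resp.\ $4$-th, power equals $-1$ has order divisible by $4$, resp.\ equal to $8$). The paper's argument buys uniformity in $p$ and no case analysis; yours buys the explicit value of the root of unity and avoids Kronecker's theorem.

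One caveat, which you share with the paper: in part (3) you invoke Lemma \ref{symrootlior} without verifying its hypothesis $S(k_1,k_2)\in\QQ$. That hypothesis is essential for conclusions (2)--(3) of the lemma (the automorphism $\sigma_\lambda$ fixes $S$ only if $S$ is rational), and it can fail when $nr$ is odd: for $q=2$ one has $\psi_2(1,0,0)=1$ and $\psi_2(3,0,0)=1$, so $f_2(1,0,0)=1/(2\sqrt 2)$ while $f_2(3,0,0)=-1/(2\sqrt 2)$, and $\sigma_3$ sends $\omega_8+\omega_8^{-1}$ to its negative rather than fixing it. The periodicity assertion is unaffected, since it needs only that each $\gamma_1(\chi)/\sqrt q$ is an $o_q$-th root of unity, but the symmetry assertions require either restricting to $n$ with $q^{n/2}\in\QQ$ or some additional input. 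Since the paper's own proof contains the identical omission, this is not a defect of your write-up relative to the paper, but it is the one step of the argument that does not go through as stated.
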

The proof we give for the second part of Proposition \ref{sym2} is essentially due to Katz \cite[p.~3631]{katz2013}, although Katz was only interested in odd $p$.
\begin{proof}
The first part of the proposition follows directly from Lemma \ref{lemformpsi} with $\ell=2$. We proceed to prove the second part. 
We start by showing that $\gamma_1(\chi)$ has absolute value $\sqrt{q}$ whenever $\chi \in G(R_2) \setminus G(R_1)$. We have
\begin{equation}\label{gamma1eval}
\begin{split}
|\gamma_1(\chi)|^2 &= \gamma_1(\chi) \overline{\gamma_1(\chi)} = \sum_{a,b \in \FF_q} \chi(T+a) \overline{\chi}(T+b) = \sum_{a,b \in \FF_q} \chi(T+a) \chi(T^2-bT+b^2) \\
&= \sum_{a,b \in \FF_q} \chi(-b+a, b(b-a) ) = \sum_{a' ( =a-b),b' (= -b)\in \FF_q} \chi(a',a'b').
\end{split} 
\end{equation}
We observe that the terms in the last sum of  \eqref{gamma1eval} corresponding to some fixed $a' \neq 0$ contribute $\sum_{c \in \FF_q} \chi(a',c)$ to the sum. Hence, \eqref{gamma1eval} may be written as follows.
\begin{equation}\label{gamma1eval2}
|\gamma_1(\chi)|^2= \sum_{0 \neq a' \in \FF_q, c \in \FF_q} \chi(a',c) + \sum_{b' \in \FF_q} \chi(0,0) = \sum_{x,y \in \FF_q} \chi(x,y) - \sum_{c \in \FF_q} \chi(0,c) + q.
\end{equation}
We have
\begin{equation}\label{van1ri}
\sum_{x,y \in \FF_q} \chi(x,y) = 0
\end{equation}
by the orthogonality relation \eqref{orthouse}. The identity $\chi(a,b) = \chi(a,0) \cdot \chi(0,b)$ implies that  $\chi$ cannot be constantly $1$ on the subgroup $\{ (0,c) : c \in \FF_q\} \subseteq \mathcal{M}_q / R_2$, since otherwise $\chi \in G(R_1)$. Hence, orthogonality of characters implies that 
\begin{equation}\label{van2ri}
\sum_{c \in \FF_q} \chi(0,c)=0.
\end{equation}
From \eqref{gamma1eval2}, \eqref{van1ri} and \eqref{van2ri} we get that
\begin{equation}\label{gamma1absval}
\gamma_1(\chi) \overline{\gamma_1(\chi)} = q.
\end{equation}
Let
\begin{equation}
e_q=\begin{cases} p & \text{if }p > 2, \\  2p & \text{if }p=2, \end{cases}
\end{equation}
be the exponent of the group $G(R_{2,q})$, according to Lemma \ref{structlem}. Since $\chi(a,b)^{e_q} = \chi( (a,b)^{e_q}) = \chi(0,0)=1$, it follows that $\chi(a,b) \in \mathbb{Z}[\omega_{e_q}]$ and so
\begin{equation}\label{gamma1ring}
\gamma_1(\chi), \overline{\gamma_1(\chi)} \in \mathbb{Z}[\omega_{e_q}].
\end{equation}
Since the image of $\chi$ is contained in $\mathbb{Z}[\omega_{e_q}]$, $\sigma \circ \chi$ is a character of $G(R_2) \setminus G(R_1)$ for any $\sigma \in \text{Gal}(\QQ(\omega_{e_q}) / \QQ)$. Repeating the arguments that gave us \eqref{gamma1absval} we obtain
\begin{equation}\label{gamma1absvalall}
|\gamma_1(\sigma \circ \chi)| = |\sigma(\gamma_1(\chi))| = \sqrt{q}
\end{equation}
for all $\sigma \in \text{Gal}(\QQ(\omega_{e_q}) / \QQ)$. 
Let $\mathfrak{p}$ be the unique prime in $\mathbb{Z}[\omega_{e_q}]$ lying over $p$. From \eqref{gamma1absval} and \eqref{gamma1ring}, $\gamma_1(\chi)$ is an algebraic integer with $|\gamma_1(\chi)|_{\mathfrak{q}} = 1$ for any finite prime $\mathfrak{q}$ in $\mathbb{Q}(\omega_{e_q})$, except for $\mathfrak{p}$. Set
\begin{equation}
g_{p} = \begin{cases} \sum_{a \in \FF_p} \left( \frac{a}{p} \right) \omega_p^a & \text{if }p >2 \\ 1+i & \text{if }p =2 \end{cases} \in \ZZ[\omega_{e_q}].
\end{equation} 
Since $g_p$ is a quadratic Gauss sum when $p>2$, we have \cite[Ch.~6,~Thm.~1]{ireland1990}
\begin{equation}\label{gaussvalue}
g_p = \sqrt{p} \cdot \begin{cases} 1 & \text{if } p \equiv 1 \bmod 4, \\ i=\omega_4 & \text{if }p \equiv 3 \bmod 4, \\ \frac{1+i}{\sqrt{2}}=\omega_8 & \text{if }p=2. \end{cases}
\end{equation}
In particular, 
\begin{equation}\label{eq:gp_abs}
|g_p| = \sqrt{p}.
\end{equation}
Let
\begin{equation}
c_{\chi} = \frac{\gamma_1(\chi)}{g_p^r} \in \QQ(\omega_{e_q}).
\end{equation}
Since $g_p$ is an algebraic integer in $\mathfrak{p}$, it follows that $|c_{\chi}|_{\mathfrak{q}} = 1$ for all finite primes $\mathfrak{q}$ in $\mathbb{Q}(\omega_{e_q})$, except possibly at $\mathfrak{p}$. As $|\sigma(\gamma_1(\chi))|=|g_p^r|=\sqrt{q}$ for all $\sigma \in \mathrm{Gal}(\mathbb{Q}(\omega_{e_q})/\mathbb{Q})$ by \eqref{gamma1absvalall} and \eqref{eq:gp_abs}, it follows that $|c_{\chi}|_{\mathfrak{q}}=1$ at the infinite primes also. Thus, the product formula $1 = \prod_{\mathfrak{q} \text{ a prime of } \QQ(\omega_{e_q})} |c_{\chi}|_{\mathfrak{q}}$ implies that $|c_{\chi}|_{\mathfrak{p}}=1$. Hence $c_{\chi}$ is an algebraic integer, in fact a unit in $\mathbb{Z}[\omega_{e_q}]$.
 
A result of Kronecker \cite{kronecker1857} states that if all the conjugates of an algebraic integer have absolute value 1, then it is a root of unity. Applying this result to $c_{\chi}$ shows that $c_{\chi}$ is a root of unity. The roots of unity in $\mathbb{Q}(\omega_{e_q})$ are powers of $\omega_{e_q\cdot(1+1_{p>2})}$. Hence, for any $\chi \in G(R_2) \setminus G(R_1)$,
\begin{equation}\label{valuecchi}
\exists j \in \mathbb{N}: \frac{\gamma_1(\chi)}{\sqrt{q}} = \omega_{e_q\cdot (1+1_{p>2})}^j \cdot \left( \frac{g_p}{\sqrt{p}} \right)^r.
\end{equation}
Combining \eqref{gaussvalue} and \eqref{valuecchi}, the second part of the proposition follows.

We turn to the last part of the proposition.  For any $A=(t_1,t_2) \in R_{2,q}$ and $\chi \in G(R_{2})/G(R_1)$, the number $\overline{\chi}(A)$ is a root of unity of order (dividing) $e_q$, hence we may write
\begin{equation}
\overline{\chi}(A) = \omega_{e_q}^{a_{\chi,A}},
\end{equation}
where $a_{\chi,A}$ is an integer satisfying $0 \le a_{\chi,A} < e_q$.  For any $0 \le i < o_q$, let
\begin{equation}
F_i(x) = \frac{-1}{q^2}\sum_{\chi \in G(R_{2}) \setminus G(R_{1}): \frac{\gamma_1(\chi)}{\sqrt{q}}= \omega_{o_q}^i} x^{a_{\chi,A}}.
\end{equation}
Set $m_1= o_q$, $m_2=e_q$ and define the following function from $\mathbb{Z}^2$ to $\mathbb{C}$:
\begin{equation}
S(k_1,k_2) = \sum_{i=1}^{m_1} \omega_{m_1}^{ik_1} F_i(\omega_{m_2}^{k_2}).
\end{equation}
We have
\begin{equation}
\begin{split}
S(k_1,k_2) &= \frac{-1}{q^2} \sum_{i=1}^{o_q} \sum_{\chi \in G(R_2)\setminus G(R_1): \frac{\gamma_1(\chi)}{\sqrt{q}} = \omega_{o_q}^i } \omega_{o_q}^{ik_1} \omega_{e_q}^{k_2 \cdot a_{\chi,A}} \\
&= \frac{-1}{q^2} \sum_{\chi \in G(R_2)\setminus G(R_1)} (\frac{\gamma_1(\chi)}{\sqrt{q}})^{k_1} \overline{\chi}(A)^{k_2}\\
& = f_q\left(k_1, k_2 t_1, k_2 t_2 + \binom{k_2}{2} t_1^2\right).
\end{split}
\end{equation}
The third part of the lemma now follows by applying Lemma \ref{symrootlior} to $S$.
\end{proof}
\begin{remark}
Let $q=2^r$. Proposition \ref{sym2} may be used in order to calculate $\psi_q(n,t_1,t_2)$ explicitly, which recovers \cite[Thms.~2--4]{kuzmin1991}. Namely, the third part of the proposition shows that the calculation of $\psi_q(n,t_1,t_2)$ reduces to the following cases: $n=1,2,4$, and if $2\nmid r$ then also the case $n=8$. The cases $n=1,2$ are trivial. By Proposition \ref{sym2} we can treat $n=8$ as follows.
\begin{equation}
\begin{split}
\frac{\psi_q(8,t_1,t_2) - q^6}{q^4} &= \frac{-1}{q^2} \sum_{\chi \in G(R_2) \setminus G(R_1)} \overline{\chi}(t_1,t_2) \left( \frac{\gamma_1(\chi)}{\sqrt{q}} \right)^8 = \frac{-1}{q^2} \sum_{\chi \in G(R_2) \setminus G(R_1)} \overline{\chi}(t_1,t_2) \\
&= \frac{-1}{q^2} \sum_{\chi \in G(R_2)} \overline{\chi}(t_1,t_2) + \frac{1}{q^2} \sum_{\chi \in G(R_1)} \overline{\chi}(t_1,t_2)\\
&=-1_{(t_1,t_2)=(0,0)} + \frac{1_{t_1=0}}{q}.
\end{split}
\end{equation}
So the only non-trivial case is $n=4$. An appropriate linear change of variables $f(T) \mapsto \frac{f(\lambda_1 T+\lambda_2)}{\lambda_2^4}$ ($\lambda_1 \in \FF_q^{\times}, \lambda_2 \in \FF_q$) shows that
\begin{equation}\label{eq:afterlinear}
\psi_q(4,a,b) = \begin{cases} \psi_q(4,1,0) & \mbox{if $a \neq 0$,} \\ \psi_q(4,0,1) & \mbox{if $a=0$, $b \neq $0,} \\ \psi_q(4,0,0) & \mbox{if $a=b=0$,}\end{cases}
\end{equation}
and reduces the problem to the evaluation of $\psi_q(4,1,0)$, $\psi_q(4,0,1)$ and $\psi_q(4,0,0)$. The values $\psi_q(4,0,0)$ and $\psi_q(4,0,1)$ are calculated in  \cite[Thm.~1,~Prop.~2--3]{kuzmin1989} by elementary considerations. The relation
\begin{equation}
q^4 = \psi_q(4) =\sum_{a,b \in \FF_q} \psi_q(4,a,b)= \psi_q(4,0,0) + (q-1)\psi_q(4,0,1) + (q^2-q) \psi_q(4,1,0),
\end{equation}
implied by \eqref{eq:afterlinear}, allows us to recover $\psi_q(4,1,0)$ as well.
\end{remark}
\subsection{Properties of \texorpdfstring{$G(R_{3,2^r})$}{G(R 3, 2r)}}
Fomenko has shown, by elementary means, that the $L$-functions of $\chi \in G(R_{3,2^r})$ satisfy the Riemann Hypothesis \cite[Prop.~5]{fomenko1996}. He did so by using a certain homomorphism 
\begin{equation}
\varepsilon \colon G(R_{3,2^r}) \to \FF_{2^r} \oplus \FF_{2^r}.
\end{equation}
We recall the definition and properties of $\varepsilon$, as proven in \cite[Prop.~5]{fomenko1996}, because we build upon them in the proof of Theorem \ref{24period}. Let $q = 2^r$. Let $a,b,c,d \in \FF_q$. In $\mathcal{M}_q / R_3$ we have
\begin{equation}
(0,a,b) \cdot (0,c,d) = (0,a+b,c+d),
\end{equation}
hence $H(R_{3,q})=\{(0,a,b) :  a,b\in \mathbb{F}_q\}$ is a subgroup of $\mathcal{M}_q / R_3$, isomorphic to $\FF_q \oplus \FF_q$. The restriction $\chi_H$ of any character $\chi \in G(R_{3,q})$ to $H$ is a character of $H$, given by 
\begin{equation}
\chi_H(0,a,b) = \chi_q(\lambda a +\mu b)
\end{equation}
for some unique $\lambda, \mu \in \FF_q$. Fomenko defined the homomorphism $\varepsilon \colon G(R_{3,q}) \to \FF_q \oplus \FF_q$ by 
\begin{equation}
\varepsilon(\chi) = (\lambda , \mu).
\end{equation}
\begin{proposition}[Fomenko]\label{fomenkprop}
Let $q=2^r$. Let $\chi \in G(R_{3,q})$ and assume that $\varepsilon(\chi) = (\lambda,\mu)$.
\begin{enumerate}
\item The homomorphism $\varepsilon$ is surjective and its kernel is of size $q$.
\item If $\lambda=\mu = 0$ then $\chi \in G(R_1)$.
If $\mu =0$ but $\lambda \neq 0$ then $\chi \in G(R_2) \setminus G(R_1)$.
\item If $\mu \neq 0$ then $\chi \in G(R_3)\setminus G(R_2)$. If we write the $L$-function of $\chi$ as
\begin{equation}
L(\frac{u}{\sqrt{q}},\chi) = 1 + \alpha(\chi) u + \beta(\chi)u^2,
\end{equation}
then
\begin{equation}\label{alphasquaredprop}
\alpha(\chi)^2 = \chi\left(\frac{\lambda}{\mu},0,0\right) + \sum_{c \in \FF_q: \mu c^3 + \lambda c^2 + 1 = 0} \chi(c,0,0)
\end{equation}
and
\begin{equation}
\beta(\chi) = \chi(\frac{\lambda}{\mu},0,0).
\end{equation}
\item If $\chi$ is primitive modulo $R_3$ then $L(u,\chi)$ satisfies the Riemann Hypothesis.
\end{enumerate}
\end{proposition}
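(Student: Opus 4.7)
Parts 1 and 2 are structural. Define $\varepsilon$ as the composition of the restriction map $G(R_{3,q}) \to \widehat{H(R_{3,q})}$ with the natural isomorphism $\widehat{\FF_q \oplus \FF_q} \cong \FF_q \oplus \FF_q$ sending $(\lambda, \mu)$ to the character $(a, b) \mapsto \chi_q(\lambda a + \mu b)$. By Pontryagin duality applied to the inclusion $H \hookrightarrow \mathcal{M}_q/R_3$, restriction is surjective with kernel of order $[\mathcal{M}_q/R_3 : H] = q$. A character in this kernel is determined by its values on the coset representatives $(s, 0, 0)$, and the group law $(s_1, 0, 0)(s_2, 0, 0) = (s_1 + s_2, s_1 s_2, 0)$ combined with triviality on $H \ni (0, s_1 s_2, 0)$ forces $s \mapsto \chi(s, 0, 0)$ to be an additive character of $\FF_q$, i.e.\ $\chi \in G(R_1)$. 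This proves part 1 and the first claim of part 2. The case $\mu = 0$, $\lambda \neq 0$ corresponds to $\chi$ being trivial on the subgroup $\{(0, 0, b)\}$, which is the kernel of $\mathcal{M}_q/R_3 \to \mathcal{M}_q/R_2$, so $\chi \in G(R_2)$; the condition $\lambda \neq 0$ excludes $G(R_1)$.

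For part 3, I would compute $L(u, \chi) = 1 + c_1 u + c_2 u^2$ directly, where $c_n = \sum_{f \in \mathcal{M}_{n, q}} \chi(f)$, so $c_1 = \sqrt{q}\, \alpha$ and $c_2 = q\beta$. The key identity, valid in characteristic 2 where $-ab = ab$, is the decomposition $(a, b, 0) = (a, 0, 0)(0, b, 0)(0, 0, ab)$, yielding
\begin{equation*}
\chi(a, b, 0) = \chi(a, 0, 0) \cdot \chi_q((\lambda + \mu a) b).
\end{equation*}
For $c_2 = \sum_{a, b} \chi(a, b, 0)$, summing over $b$ first produces a delta function concentrated at $a = \lambda/\mu$, whence $\beta = \chi(\lambda/\mu, 0, 0)$. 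For $c_1^2$, the product rule $(a_1, 0, 0)(a_2, 0, 0) = (a_1 + a_2, a_1 a_2, 0)$ followed by the substitution $s = a_1 + a_2$, $a = a_1$ (so $a_1 a_2 = as + a^2$ in characteristic 2) gives
\begin{equation*}
c_1^2 = \sum_s \chi(s, 0, 0) \sum_a \chi_q((\lambda + \mu s)(as + a^2)).
\end{equation*}
The characteristic-2 identity $\chi_q(\kappa a^2) = \chi_q(\sqrt{\kappa}\, a)$, a consequence of $\mathrm{Tr}_{\FF_q/\FF_2}(x^2) = \mathrm{Tr}_{\FF_q/\FF_2}(x)$, linearizes the inner sum in $a$, which then vanishes unless $(\lambda + \mu s)s + \sqrt{\lambda + \mu s} = 0$. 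Squaring this condition gives $\mu s^3 + \lambda s^2 + 1 = 0$ when $\lambda + \mu s \neq 0$, or $s = \lambda/\mu$ when $\lambda + \mu s = 0$; these cases are disjoint since $s = \lambda/\mu$ gives $\mu s^3 + \lambda s^2 + 1 = 1$ in characteristic 2. This yields the stated formula for $\alpha^2 = c_1^2/q$.

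For part 4, the formulas yield $|\beta(\chi)| = 1$ and $|\alpha(\chi)|^2 = |\alpha(\chi)^2| \leq 4$ (a sum of at most four roots of unity), so $|\gamma_1 \gamma_2| = q$ and $|\gamma_1 + \gamma_2| \leq 2\sqrt{q}$. These bounds alone do not force $|\gamma_i| = \sqrt{q}$, so I would apply the same analysis to each lift $\chi^{(r)} \in G(R_{3, q^r})$. Since $\gamma_i(\chi) \neq 0$, formula \eqref{liftlform} gives $\deg L(u, \chi^{(r)}) = 2$, so $\chi^{(r)}$ is primitive and the analogous bound $|\gamma_1(\chi)^r + \gamma_2(\chi)^r| \leq 2 q^{r/2}$ holds for every $r \geq 1$. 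Writing $|\gamma_1| = \sqrt{q}\, \rho$ and $|\gamma_2| = \sqrt{q}/\rho$ with $\rho \geq 1$ (possible by $|\gamma_1 \gamma_2| = q$), the reverse triangle inequality yields $\rho^r - \rho^{-r} \leq 2$ for all $r$, which forces $\rho = 1$. The main obstacle throughout is the computation of $\alpha^2$ in part 3, where one must reduce the a priori quadratic character sum $c_1^2$ to an evaluation in $\FF_q$; the characteristic-2 identity $\chi_q(\kappa a^2) = \chi_q(\sqrt{\kappa}\, a)$ is the essential ingredient, and the remaining parts follow cleanly once this formula is in place.
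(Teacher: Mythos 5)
Your proof is correct. Note that the paper itself supplies no proof of this proposition: it is stated as a recollection of \cite[Prop.~5]{fomenko1996} and used as a black box, so there is nothing internal to compare against; your write-up is a complete, self-contained reconstruction. The computations in parts 1--3 check out: the factorization $(a,b,0)=(a,0,0)(0,b,0)(0,0,ab)$ is valid for the group law \eqref{powermqr} in characteristic $2$, the coset argument correctly identifies $\ker\varepsilon$ with $G(R_1)$, and the linearization $\chi_q(\kappa a^2)=\chi_q(\sqrt{\kappa}\,a)$ via $\mathrm{Tr}(x^2)=\mathrm{Tr}(x)$ is exactly the right tool to collapse $c_1^2$ to the delta condition $(\lambda+\mu s)s=\sqrt{\lambda+\mu s}$, whose solution set you correctly split into the disjoint pieces $\{\lambda/\mu\}$ and the roots of $\mu s^3+\lambda s^2+1$. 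The most commendable point is part 4: you correctly recognize that $|\gamma_1\gamma_2|=q$ and $|\gamma_1+\gamma_2|\le 2\sqrt{q}$ alone do not force $|\gamma_i|=\sqrt{q}$, and you close the gap by running the coefficient bound through every lift $\chi^{(m)}$ (using \eqref{liftlform} and the degree bound $\deg L\le \ell-1$ to see the lift stays primitive), which is the standard elementary substitute for invoking Weil. Two cosmetic remarks: you reuse the letter $r$ both for $q=2^r$ and for the extension degree of the lift, which should be disambiguated; and it is worth stating explicitly that $\chi^{(m)}\ne\chi_0$ (immediate, since its $L$-function is a polynomial rather than $(1-q^mu)^{-1}$) before concluding primitivity.
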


The proof of Theorem \ref{24period} relies crucially on the following lemma.
\begin{lem}\label{lfuncroot24}
Let $q=2^r$. Let $\chi$ be a primitive character modulo $R_{3,q}$. If we factor $L(u,\chi)$ as 
\begin{equation}
(1-\gamma_1(\chi)u)(1-\gamma_2(\chi)u).
\end{equation}
then 
\begin{equation}
\frac{\gamma_i(\chi)}{\sqrt{q}}
\end{equation}
are roots of unity of order (dividing) $24$.
\end{lem}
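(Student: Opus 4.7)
The plan is to combine Proposition~\ref{fomenkprop} with the Riemann Hypothesis to force the normalized inverse roots $u_i:=\gamma_i(\chi)/\sqrt{q}$ ($i=1,2$) to be $24$th roots of unity. Writing $L(u/\sqrt{q},\chi)=1+\alpha u+\beta u^2$, I have $u_1+u_2=-\alpha$ and $u_1u_2=\beta$. By Lemma~\ref{structlem} the exponent of $G(R_{3,2^r})$ is $2^{\lceil\log_2 4\rceil}=4$, so $\chi$ takes values in $\{\pm 1,\pm i\}$; hence $\beta$ is a $4$th root of unity, $\alpha^2\in\mathbb{Z}[i]$, and Proposition~\ref{fomenkprop}(4) gives $|u_i|=1$.

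The crucial observation is that $|\alpha|^2\in\{0,1,2,3,4\}$. Using $|u_i|=1$ together with $u_1u_2=\beta$ I obtain $\bar u_i=1/u_i$, so that $\bar\alpha=-(\bar u_1+\bar u_2)=-(u_1+u_2)/(u_1u_2)=\alpha/\beta$, and therefore
\[
|\alpha|^2 \;=\; \alpha\bar\alpha \;=\; \alpha^2/\beta.
\]
Dividing the explicit formula~\eqref{alphasquaredprop} by $\beta$ exhibits $|\alpha|^2$ as $1$ plus a sum of at most three quotients of $4$th roots of unity. Since this quantity is real, the $\pm i$ contributions must cancel in pairs, leaving only $\pm 1$ summands, so $|\alpha|^2\in\mathbb{Z}$. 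The trivial inequality $|\alpha|=|u_1+u_2|\le 2$ then pins $|\alpha|^2$ to the claimed set.

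The remainder is a Chebyshev-type reduction. Squaring $u_1+u_2=-\alpha$ and substituting $u_2=\beta/u_1$ yields $u_1^2+\beta^2/u_1^2=\alpha^2-2\beta$; setting $w:=u_1^2/\beta$ (so $|w|=1$) this becomes
\[
w+w^{-1} \;=\; |\alpha|^2-2 \;\in\; \{-2,-1,0,1,2\}.
\]
For each of these five values, $\arg w$ is an integer multiple of $\pi/6$, so $w$ is a root of unity of order dividing $12$. Combined with $\beta^4=1$, one obtains $u_1^{24}=(\beta w)^{12}=\beta^{12}w^{12}=1$, and the relation $u_2=\beta/u_1$ then gives $u_2^{24}=\beta^{24}u_1^{-24}=1$.

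I expect the main obstacle to be the integrality step: both the reality of $\alpha^2/\beta$ (coming from the Riemann Hypothesis via $|u_i|=1$) and the explicit arithmetic form of $\alpha^2$ in $\mathbb{Z}[i]$ (coming from Fomenko's formula~\eqref{alphasquaredprop}) are needed to reduce the problem to the five Chebyshev values. Once $|\alpha|^2\in\{0,1,2,3,4\}$ is in hand, the remainder is a direct root-of-unity computation independent of $r$ and of the specific character $\chi$.
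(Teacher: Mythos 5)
Your proof is correct and rests on essentially the same ingredients as the paper's: Fomenko's Proposition~\ref{fomenkprop} (the explicit formula for $\alpha^2$ and $\beta$ plus the Riemann Hypothesis) is used to show that $S_\chi=\alpha^2/\beta=|\alpha|^2$ is a non-negative integer at most $4$, from which the $24$th-root-of-unity conclusion follows. The only difference is in execution: the paper pins down $S_\chi\in\{0,1,2,4\}$ by a case analysis on the number of $\FF_q$-roots of $\mu x^3+\lambda x^2+1$ and then factors $L$ explicitly in each case, whereas you obtain integrality directly from the reality of $\alpha^2/\beta$ and the Gaussian-integer form of $\alpha^2$, get the bound from the triangle inequality, and finish uniformly via $w+w^{-1}=|\alpha|^2-2$ --- a mild but genuine streamlining (your admissible set also contains $3$, which the paper's case analysis excludes, but that value still yields order dividing $24$, so nothing is lost).
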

\begin{proof}
Let $\lambda, \mu \in \FF_q$ be the elements such that $\varepsilon(\chi) = (\lambda,\mu)$. We know that $\mu \neq 0$, as $\chi$ is primitive. The polynomial 
\begin{equation}
F(x)=\mu x^3 + \lambda x^2 +1
\end{equation}
is separable, since $(F,F')=(F,\mu x^2) = 1$. Let $\sqrt{\chi(\frac{\lambda}{\mu},0,0)}$ be some quadratic root of $\chi(\frac{\lambda}{\mu},0,0)$. By Proposition \ref{fomenkprop} we have
\begin{equation}
G(u):=L\left(\frac{u}{\sqrt{q \cdot \chi(\frac{\lambda}{\mu},0,0)}},\chi\right) = 1+ \sqrt{S_{\chi}} u+u^2,
\end{equation}
where $\sqrt{S_{\chi}}$ is one of the quadratic roots of
\begin{equation}
S_{\chi} = 1 + \frac{\sum_{c \in \FF_q: \mu c^3 + \lambda c^2 + 1} \chi(c,0,0)}{\chi(\frac{\lambda}{\mu},0,0)}.
\end{equation}
By the Riemann Hypothesis for $L(u,\chi)$, we must have that $S_{\chi}$ is real and
\begin{equation}
S_{\chi} \ge 0,
\end{equation}
since $\sqrt{S_{\chi}}$ is a sum of a number on the unit circle and its complex conjugate. 

The group $\mathcal{M}_q / R_{3,q}$ has exponent $4$. Hence $\chi(\frac{\lambda}{\mu},0,0)$ is a root of unity of order (dividing) $4$, and so its square roots are roots of unity of order (dividing) $8$, which divides $24$. So we can reduce the lemma to showing that the roots of $G(u)$ are roots of unity of order dividing $24$.
\begin{enumerate}
\item If $F(x)=0$ has no roots in $\FF_q$ then $S_{\chi}=1$ and $G(u) = 1 \pm u + u^2 \mid u^6-1$. In particular, the roots of $G$ are roots of unity of order dividing $24$.
\item If $F(x)=0$ has exactly one solution in $\FF_q$ then $S_{\chi} \in \{ 1+a : a \in \mu_4\}$. The two complex options $1\pm i$ are impossible, hence we have $G(u) = 1 +u^2$ or $G(u) = 1 \pm \sqrt{2} u +u^2 \mid u^4+1 \mid u ^8-1$. In either case, the roots of $G$ are roots of unity of order dividing $24$.
\item If $F(x)=0$ has exactly two solutions in $\FF_q$, namely $c_1$ and $c_2$, we reach a contradiction since Vieta's formulas and separability of $F$ guarantee that $\frac{\lambda}{\mu}+c_1+c_2$ is a third distinct solution to $F(x)=0$.
\item If $F(x)=0$ has three distinct solutions in $\FF_q$, then $S_{\chi} \in \{ 1+ a+b+c : a,b,c \in \mu_4 \}$. We know that $S_{\chi}$ is real and $\ge 0$, which leaves us only the options
\begin{equation}
S_{\chi} \in \{0,2,4\}.
\end{equation}
Hence $G(u) = 1+u^2$ or $G(u) = 1\pm \sqrt{2} u+u^2 \mid u^4+1$ or $G(u) = 1\pm 2u+u^2 =(u\pm 1)^2$. In all cases the roots of $G$ are roots of unity of order dividing $24$.
\end{enumerate}
Regardless of the number of solutions to $F(x)=0$ in $\FF_q$, the lemma is proven.
\end{proof}
\subsection{Proof of Theorem \ref{24period}}\label{proof24period}
Let $q=2^r$. From Lemma \ref{lemformpsi} we may write
\begin{equation}\label{fqform}
f_q(n,t_1,t_2,t_{3})= \frac{-1}{q^{3}} \sum_{\chi_0 \neq \chi \in G(R_{3})} \overline{\chi}(t_1,t_2,t_3) \sum_{j=1}^{\deg L(u,\chi)} \left(\frac{\gamma_j(\chi)}{\sqrt{q}} \right)^n.
\end{equation}
Proposition \ref{sym2} and Lemma \ref{lfuncroot24} tell us that $\frac{\gamma_j(\chi)}{\sqrt{q}}$ are roots of unity of order dividing $24$. Fix $A=(t_1,t_2,t_3) \in \mathcal{M}_q / R_{3,q}$. By Lemma \ref{structlem}, the group $\mathcal{M}_q / R_{3,q}$ has exponent $4$. Hence, for any $\chi_0 \neq \chi \in G(R_{3})$, the number $\overline{\chi}(A)$ is a root of unity of order dividing $4$. Thus we may write
\begin{equation}
\overline{\chi}(A) = \omega_{4}^{a_{\chi,A}},
\end{equation}
where $a_{\chi,A}$ is an integer satisfying $0 \le a_{\chi,A} < 4$.  For any $0 \le i < 24$, let
\begin{equation}
F_i(x)= \frac{-1}{q^{3}} \sum_{\chi_0 \neq \chi \in G(R_{3})} \sum_{\substack{1 \le j \le \deg L(u,\chi):\\ \frac{\gamma_j(\chi)}{\sqrt{q}}= \omega_{24}^i}} x^{a_{\chi,A}}.
\end{equation}
Set $m_1= 24$, $m_2=4$ and define the following function from $\mathbb{Z}^2$ to $\mathbb{C}$:
\begin{equation}
S(k_1,k_2) = \sum_{i=1}^{m_1} \omega_{m_1}^{ik_1} F_i(\omega_{m_2}^{k_2}).
\end{equation}
As $A^{k_2}=(k_2 t_1, k_2 t_2 + \binom{k_2}{2} t_1^2, k_2 t_3 +\binom{k_2}{3} t_1)$, we have by \eqref{fqform}
\begin{equation}
\begin{split}
S(k_1,k_2) &= \frac{-1}{q^3} \sum_{i=1}^{24} \sum_{\chi_0 \neq \chi \in G(R_3)} \sum_{1 \le j \le \deg L(u,\chi):  \frac{\gamma_j(\chi)}{\sqrt{q}} = \omega_{24}^i } \omega_{24}^{ik_1} \omega_{4}^{k_2 \cdot a_{\chi,A}} \\
&= \frac{-1}{q^3} \sum_{\chi_0 \neq \chi \in G(R_3)}  \overline{\chi}(A^{k_2}) \sum_{1 \le j \le \deg L(u,\chi)} (\frac{\gamma_j(\chi)}{\sqrt{q}})^{k_1} \\
& = f_q\left(k_1, k_2 t_1, k_2 t_2 + \binom{k_2}{2} t_1^2, k_2 t_3 +\binom{k_2}{3} t_1\right).
\end{split}
\end{equation}
The theorem now follows by applying Lemma \ref{symrootlior} to $S$. \qed
\subsection{Proof of Corollary \ref{cor:char224}}\label{sec:proofformula24}
Let $q=2^r$. Let $t_1,t_2,t_3 \in \FF_q$. For any $\chi_0 \neq \chi \in G(R_3)$ and any inverse root $\gamma$ of $L(u,\chi)$, Proposition \ref{sym2} and Lemma \ref{lfuncroot24} tell us that $\frac{\gamma}{\sqrt{q}}$ is a  root of unity of order dividing $24$. Thus, Lemma \ref{lemformpsi} implies that
\begin{equation}\label{24caseeq}
\begin{split}
f_q(24,t_1,t_2,t_3)&= \frac{-1}{q^{3}} \sum_{\chi_0 \neq \chi \in G(R_{3})} \overline{\chi}(t_1,t_2,t_3) \sum_{i=1}^{\deg L(u,\chi)} \left( \frac{\gamma_i(\chi)}{\sqrt{q}} \right)^{24} \\
&= \frac{-1}{q^{3}} \sum_{\chi_0 \neq \chi \in G(R_{3})} \overline{\chi}(t_1,t_2,t_3) \sum_{i=1}^{\deg L(u,\chi)} 1 \\
&= \frac{-1}{q^{3}} \left( \sum_{\chi \in G(R_{2}) \setminus G(R_1)} \overline{\chi}(t_1,t_2,t_3) + 2\sum_{\chi \in G(R_{3})\setminus G(R_2)} \overline{\chi}(t_1,t_2,t_3) \right).
\end{split}
\end{equation}
By \eqref{orthouse} we have
\begin{equation}\label{orthqqqqqq}
\begin{split}
\sum_{\chi \in G(R_1)} \overline{\chi}(t_1,t_2,t_3) &= q \cdot 1_{t_1 = 0},\\
\sum_{\chi \in G(R_2)} \overline{\chi}(t_1,t_2,t_3) &= q^2 \cdot 1_{t_1 =t_2= 0},\\
\sum_{\chi \in G(R_3)} \overline{\chi}(t_1,t_2,t_3) &= q^3 \cdot 1_{t_1 = t_2=t_3=0}.
\end{split}
\end{equation}
Substituting \eqref{orthqqqqqq} in \eqref{24caseeq}, we conclude the proof of the corollary. \qed
\section{Periodicity and symmetry for \texorpdfstring{$p=5$}{p=5}, \texorpdfstring{$\ell=3$}{l=3}}
\subsection{Auxiliary lemmas} 
\begin{lem}\label{lstruct3}
Let $q=p^r$ be a prime power with $p \ge 5$. Let $\chi \in G(R_{3,q}) \setminus G(R_{2,q})$. 
Then there exist $a \in \FF_q^{\times}$, $b,c \in \FF_q$ such that
\begin{equation}
L\left(\frac{u}{\sqrt{q}\omega_{p}^c},\chi\right) = 1 + \frac{\sum_{x \in \FF_q} \chi_q(ax^3 +bx)}{\sqrt{q}} + u^2.
\end{equation}
Moreover, if $\lambda_1, \lambda_2,\lambda_3 \in \FF_q$ are the constants associated with $\chi$ in Lemma \ref{aslem}, then we may take
\begin{equation}
a=\frac{\lambda_3}{3}, \, b=\lambda_1 - \frac{\lambda_2^2}{4\lambda_3}.
\end{equation}
\end{lem}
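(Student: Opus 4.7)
Since $p\ge 5>3=\ell$, Lemma \ref{aslem} yields unique $\lambda_1,\lambda_2,\lambda_3\in\FF_q$ with
\[
\chi(a_1,a_2,a_3)=\chi_q\Bigl(\lambda_1 a_1+\lambda_2\bigl(a_2-\tfrac{a_1^2}{2}\bigr)+\lambda_3\bigl(a_3-a_1a_2+\tfrac{a_1^3}{3}\bigr)\Bigr),
\]
obtained by reading off $[T^j]\log(1+a_1T+a_2T^2+a_3T^3)$ for $j=1,2,3$. Since $\chi\notin G(R_{2,q})$, we must have $\lambda_3\ne 0$ (otherwise $\chi$ depends only on $a_1,a_2$). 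By orthogonality \eqref{orthouse}, $\sum_{f\in\mathcal{M}_{n,q}}\chi(f)=0$ for $n\ge 3$, so $L(u,\chi)=1+Au+Bu^2$ with $A=\sum_{\alpha\in\FF_q}\chi(T+\alpha)$ and $B=\sum_{\alpha,\beta\in\FF_q}\chi(T^2+\alpha T+\beta)$. The plan is to evaluate $A$ and $B$ explicitly and check that they fit an Artin--Schreier shape after a single rescaling of $u$.

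For $A$, the polynomial $T+\alpha$ has next-to-leading coefficients $(\alpha,0,0)$, hence $\chi(T+\alpha)=\chi_q\bigl(\tfrac{\lambda_3}{3}\alpha^3-\tfrac{\lambda_2}{2}\alpha^2+\lambda_1\alpha\bigr)$. I would complete the cube via $\alpha=x+\tfrac{\lambda_2}{2\lambda_3}$, which kills the $\alpha^2$ term and rewrites the exponent as $ax^3+bx+C$ with
\[
a=\tfrac{\lambda_3}{3},\qquad b=\lambda_1-\tfrac{\lambda_2^2}{4\lambda_3},\qquad C=\tfrac{\lambda_1\lambda_2}{2\lambda_3}-\tfrac{\lambda_2^3}{12\lambda_3^2},
\]
giving $A=\chi_q(C)\sum_{x\in\FF_q}\chi_q(ax^3+bx)$.

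For $B$, the polynomial $T^2+\alpha T+\beta$ has next-to-leading coefficients $(\alpha,\beta,0)$, so the exponent of $\chi_q$ in $\chi(T^2+\alpha T+\beta)$ is linear in $\beta$ with coefficient $\lambda_2-\lambda_3\alpha$. Summing over $\beta\in\FF_q$ first collapses by orthogonality to the single term $\alpha=\lambda_2/\lambda_3$, yielding $B=q\,\chi_q\bigl(\tfrac{\lambda_1\lambda_2}{\lambda_3}-\tfrac{\lambda_2^3}{6\lambda_3^2}\bigr)$. A direct comparison shows the exponent here equals exactly $2C$, so $B=q\,\chi_q(C)^2$. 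This algebraic coincidence---that the phase of $B$ is precisely the square of the phase attached to $A$---is the only non-routine point; every other step is bookkeeping with the definitions.

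Finally, choosing $c\in\mathbb{Z}$ with $\omega_p^c=\chi_q(C)$, the substitution $u\mapsto u/(\sqrt q\,\omega_p^c)$ in $L(u,\chi)=1+Au+Bu^2$ turns the quadratic coefficient into $B/(q\omega_p^{2c})=1$ and the linear coefficient into $A/(\sqrt q\,\omega_p^c)=\sum_{x\in\FF_q}\chi_q(ax^3+bx)/\sqrt q$, producing
\[
L\!\left(\tfrac{u}{\sqrt q\,\omega_p^c},\chi\right)=1+\tfrac{\sum_{x\in\FF_q}\chi_q(ax^3+bx)}{\sqrt q}\,u+u^2,
\]
which is the claimed identity with the prescribed values of $a$ and $b$.
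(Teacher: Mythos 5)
Your proposal is correct and follows essentially the same route as the paper: apply Lemma \ref{aslem} to get the explicit Gauss-sum expressions for the linear and quadratic coefficients of $L(u,\chi)$, collapse the double sum for the quadratic coefficient by summing over $\beta$ first, complete the cube with $\alpha = x + \tfrac{\lambda_2}{2\lambda_3}$, and observe that the resulting phase $C$ satisfies $B=q\,\chi_q(C)^2$ so that the rescaling $u\mapsto u/(\sqrt{q}\,\omega_p^c)$ normalizes both coefficients at once. All the algebra checks out.
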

\begin{proof}
By applying Lemma \ref{aslem} with $\ell=3$ we get that there are $\lambda_1, \lambda_2, \lambda_3 \in \FF_q$ such that
\begin{equation}
L(u,\chi) = 1+ g_1(\lambda_1,\lambda_2,\lambda_3) u + g_2(\lambda_1,\lambda_2,\lambda_3)u^2,
\end{equation}
where
\begin{equation}
\begin{split}
g_1(\lambda_1,\lambda_2,\lambda_3)&=\sum_{x \in \FF_q} \chi_q(\lambda_1 x - \frac{1}{2}\lambda_2 x^2 +\frac{1}{3}\lambda_3 x^3), \\
g_2(\lambda_1,\lambda_2,\lambda_3)&= \sum_{x,y \in \FF_q} \chi_q(\lambda_1 x + \lambda_2 (y -\frac{1}{2}x^2)+\lambda_3 (\frac{1}{3}x^3 - xy)),
\end{split}
\end{equation}
and $\lambda_3 \neq 0$ (otherwise $\chi \in G(R_2)$). As observed by Fomenko \cite[p.~269]{fomenko1996}, we may simplify $g_2$ as follows:
\begin{equation}
\begin{split}
g_2(\lambda_1,\lambda_2,\lambda_3)& = \sum_{x \in \FF_q}\chi_q(\lambda_1 x -\frac{1}{2}\lambda_2 x^2+\frac{1}{3}\lambda_3 x^3) \left( \sum_{y \in \FF_q} \chi_q\left( y\left(\lambda_2- \lambda_3 x\right)\right) \right)\\
&=q \cdot \chi_q\left( \lambda_1 \frac{\lambda_2}{\lambda_3} - \frac{1}{2}\lambda_2 \left( \frac{\lambda_2}{\lambda_3}\right)^2 + \frac{1}{3}\lambda_3 \left( \frac{\lambda_2}{\lambda_3} \right)^3 \right)\\
&=q \cdot \chi_q \left(\frac{\lambda_1 \lambda_2}{\lambda_3}-\frac{1}{6}\frac{\lambda_2^3}{\lambda_3^2}\right).
\end{split}
\end{equation}
In particular,
\begin{equation}
L\left(\frac{u}{\sqrt{q}},\chi\right) = 1+\frac{\sum_{x \in \FF_q} \chi_q(\lambda_1 x - \frac{1}{2}\lambda_2 x^2 +\frac{1}{3}\lambda_3 x^3)}{\sqrt{q}} u + \chi_q\left(\frac{\lambda_1 \lambda_2}{\lambda_3}-\frac{1}{6}\frac{\lambda_2^3}{\lambda_3^2}\right) u^2.
\end{equation}
Let $c \in \mathbb{F}_p$ such that $\omega_{p}^c=\chi_q\left(\frac{\lambda_1 \lambda_2}{2\lambda_3} - \frac{1}{12}\frac{\lambda_2^3}{\lambda_3^2}\right)$. A short calculation reveals that
\begin{equation}
\begin{split}
L\left(\frac{u}{\sqrt{q}\omega_{p}^c},\chi\right) &= 1+\frac{\sum_{x \in \FF_q} \chi_q(\frac{1}{3}\lambda_3\left(x-\frac{1}{2}\frac{\lambda_2}{\lambda_3}\right)^3 + \left( \lambda_1 - \frac{\lambda_2^2}{4\lambda_3}\right) \left(x-\frac{1}{2}\frac{\lambda_2}{\lambda_3} \right))}{\sqrt{q}}u +u^2\\
&=1+\frac{\sum_{t \in \FF_q} \chi_q(\frac{1}{3}\lambda_3 t^3 + \left( \lambda_1 - \frac{\lambda_2^2}{4\lambda_3}\right) t)}{\sqrt{q}}u+u^2.
\end{split}
\end{equation}
This establishes the lemma with $a=\frac{\lambda_3}{3}$, $b=\lambda_1 - \frac{\lambda_2^2}{4\lambda_3}$, as needed.
\end{proof}
We need the following lemma, which is essentially a special case of a result of van der Geer and van der Vlugt \cite[Thm.~13.7]{van1992}. 
\begin{lem}\label{artinunity} 
Let $p=5$ and $q=p^r$. Let $\chi \in G(R_{3,q}) \setminus G(R_{2,q})$. Then the roots of $L(\frac{u}{\sqrt{q}},\chi)$ are roots of unity of order dividing $60$.
\end{lem}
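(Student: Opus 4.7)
The plan is to apply Lemma \ref{lstruct3} and then invoke the supersingularity result of van der Geer and van der Vlugt for an associated Artin--Schreier curve.

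By Lemma \ref{lstruct3} we can write
\[
L\!\left(\frac{u}{\sqrt{q}\,\omega_5^c},\chi\right) = 1 + \frac{T}{\sqrt{q}}\,u + u^2, \qquad T := \sum_{x \in \FF_q}\chi_q(ax^3+bx),
\]
for some $a \in \FF_q^{\times}$ and $b,c \in \FF_q$ determined by $\chi$, so the inverse roots of $L(u,\chi)$ satisfy $\gamma_1 \gamma_2 = q\omega_5^{2c}$ and $\gamma_1 + \gamma_2 = -\omega_5^c T$. Writing $\gamma_i/\sqrt{q} = \omega_5^c \beta_i$ gives $\beta_1 \beta_2 = 1$ and $\beta_1 + \beta_2 = -T/\sqrt{q}$. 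Since $\omega_5^c \in \mu_5$ and $\operatorname{lcm}(5,12)=60$, it suffices to prove that $\beta_i \in \mu_{12}$.

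The next step is to identify $L(u,\chi)$ with one of the $L$-factors of the zeta function of the Artin--Schreier curve $C : y^5 - y = ax^3 + bx$ of genus $g=4$ over $\FF_q$. The numerator of $Z(C,u)$ factors as $\prod_{\psi \neq 1} L_\psi(u)$ over nontrivial additive characters of $\FF_5$, with $L_\psi(u) = 1 - T_\psi u + q u^2$ and $T_\psi = \sum_x \psi(ax^3+bx)$; the factor for $\psi = \chi_q\big|_{\FF_5}$ satisfies $L_\psi(-\omega_5^c u) = L(u,\chi)$, as one checks directly from the expression for $L(u,\chi)$ derived above. By \cite[Thm.~13.7]{van1992}, $C$ is supersingular for $p=5$, so every Frobenius eigenvalue on $H^1(C_{\overline{\FF_q}},\QQ_\ell)$ is $\sqrt{q}$ times a root of unity, and in particular each $\beta_i$ is a root of unity. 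To upgrade this to $\beta_i \in \mu_{12}$, I would use that $j=0$ is the unique supersingular $j$-invariant in characteristic $5$: the Jacobian of $C$ is isogenous over $\overline{\FF_5}$ to $E^4$ for the $j=0$ supersingular elliptic curve $E$, whose $\FF_5$-Frobenius satisfies $\pi_E^2 = -5$ (so $\pi_E/\sqrt{5} \in \mu_4$). Since $\operatorname{Aut}(E) = \mu_6$ at $j=0$ in characteristic $5$, any $\FF_q$-form of $E$ has normalized Frobenius in $\mu_4\cdot\mu_6 = \mu_{12}$, yielding $\beta_i \in \mu_{12}$ as desired.

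The main obstacle is the last step: identifying the precise twist that realizes the Jacobian of $C$ as an $\FF_q$-form of $E^4$ and verifying that each $L_\psi$-isotypic piece independently has normalized inverse roots in $\mu_{12}$. A self-contained alternative, paralleling the proof of Lemma \ref{lfuncroot24}, would be to decompose $T = \sum_{c \in \FF_q} N_c \chi_q(c)$ with $N_c = \#\{x \in \FF_q : ax^3+bx = c\} \in \{0,1,2,3\}$, enumerate the Galois-theoretic patterns of $(N_c)$ using the separability and cubic structure of $ax^3+bx-c$, and check case-by-case that $x^2 + (T/\sqrt{q})\,x + 1$ divides $x^{12}-1$.
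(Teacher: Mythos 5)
Your reduction via Lemma \ref{lstruct3} and the appeal to van der Geer--van der Vlugt for supersingularity of an associated Artin--Schreier curve are exactly the paper's route (small caveat: their Theorem 13.7 applies to $y^5-y=xR(x)$ with $R$ additive, i.e.\ to $y^5-y=ax^6+bx^2$; the paper then passes to $y^5-y=ax^3+bx$ as a quotient of that curve). Up to the point where each $\beta_i$ is known to be a root of unity you match the paper. The gap is the upgrade to $\beta_i\in\mu_{12}$. The Jacobian of $C$ is $4$-dimensional, and although it is geometrically isogenous to $E^4$ with $E$ the $j=0$ supersingular curve, its $\FF_q$-forms are governed by cohomology valued in $\mathrm{Aut}(E^4_{\overline{\FF}_q})\cong\mathrm{GL}_4(\mathcal{O})$, where $\mathcal{O}$ is a maximal order in the quaternion algebra ramified at $5$ and $\infty$ --- not in $\mathrm{Aut}(E)^4=\mu_6^4$. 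Hence the $\FF_q$-simple isogeny factors need not be elliptic curves, and supersingularity alone does not confine the normalized eigenvalues to $\mu_{12}$: for example $\sqrt{q}\,\omega_5$ is a perfectly good supersingular Weil number for $q=5^r$, and $\beta_1=\omega_5$, $\beta_2=\omega_5^{-1}$ is compatible with everything you have established at that point (the corresponding $T=-\sqrt{q}(\omega_5+\omega_5^{-1})$ is an algebraic integer in $\ZZ[\omega_5]$ of the right size). So the claim that each $L_\psi$-piece has normalized inverse roots in $\mu_{12}$ is unproved; your fallback via the fibre counts $N_c$ is only a sketch and would still need the Riemann Hypothesis bound plus nontrivial casework to exclude the $\mu_5$-type contributions.

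The fix is much cheaper and is what the paper does: you never need $\mu_{12}$, only $\mu_{60}$, and at that level possible $\mu_5$-factors are harmless. From $\beta+\beta^{-1}=-T/\sqrt{q}$ with $T\in\ZZ[\omega_5]$ and $\sqrt{5}\in\QQ(\omega_5)$, the real number $\beta+\beta^{-1}$ lies in $\QQ(\omega_5)$; if $\beta$ has order $d$, comparing degrees in $\QQ\subseteq\QQ(\beta+\beta^{-1})\subseteq\QQ(\omega_5)$ forces $\phi(d)\mid 2\phi(5)=8$. Running through the finitely many such $d$, either $d\mid 60$ --- which, combined with the factor $\omega_5^{c}$ from Lemma \ref{lstruct3}, gives the lemma since $\mathrm{lcm}(5,60)=60$ --- or $8\mid d$, which is impossible because $\omega_8+\omega_8^{-1}=\sqrt{2}\notin\QQ(\omega_5)$.
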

\begin{proof}
Let $a,b \in \FF_{q}$, $a \neq 0$. By Lemma \ref{lstruct3}, it suffices to show that
\begin{equation}\label{eq:sviachi5}
S=\sum_{x \in \FF_{5^r}} \chi_{5^r}(ax^3+bx)
\end{equation}
is of the form $5^{r/2}(\omega_{60}^i+\omega_{60}^{-i})$ for some $i$. Van der Geer and van der Vlugt proved that the Artin-Schreier curve
\begin{equation}
C_{a,b}:\, y^5-y = ax^6+bx^2
\end{equation}
is supersingular, which implies that \cite[Lem.~3.8.4]{katz2005}
\begin{equation}
\widetilde{C_{a,b}}:\, y^5-y = ax^3+bx
\end{equation}
is also supersingular. Supersingularity of $\widetilde{C_{a,b}}$ implies that the zeros of its zeta function
\begin{equation}
Z_{\widetilde{C_{a,b}}}(u) = \frac{\prod_{i=1}^{4} L_{i}(u)}{(1-qu)(1-u)}
\end{equation}
are roots of unity divided by $\sqrt{q}$, where $L_{j}(u) = \exp( \sum_{i \ge 1} ( \sum_{x \in \FF_{q^i}} \chi_{q^i}(j (ax^3+bx) ) )u^i/i)$.
The linear coefficient of $L_1$ is $S$, $\deg L_1=2$, and the same argument as in Lemma \ref{lstruct3} shows that the leading coefficient of $L_1$ is $q$. Thus $\frac{S}{\sqrt{q}}$ is a sum of two roots of unity, the roots of $L_1(\frac{u}{\sqrt{q}})$:
\begin{equation}\label{eq:sviaomega}
S = -5^{r/2} (\omega+\omega^{-1}).
\end{equation}
By \eqref{eq:sviachi5} and \eqref{eq:sviaomega}, $\omega+\omega^{-1}$ is a (real) element of $\QQ(\omega_5,\sqrt{5})=\QQ(\omega_5)$. If $\omega$ is a primitive root of unity of order $d$, then comparing degrees in the inclusions $\mathbb{Q} \subseteq \mathbb{Q}(\omega+\omega^{-1}) \subseteq \QQ(\omega_5)$ we obtain $\phi(d)\mid 2\phi(5)=8$. A finite check shows that we must have $d \mid 60$, as needed, or $8 \mid d$, which contradicts the inclusion since $\QQ(\omega_{8}+\omega_{8}^{-1}) = \QQ(\sqrt{2}) \nsubseteq \QQ(\omega_5)$.
\end{proof}
\subsection{Proof of Theorem \ref{60period}}
Let $q=5^r$. From Lemma \ref{lemformpsi} we may write
\begin{equation}
f_q(n,t_1,t_2,t_{3})= \frac{-1}{q^{3}} \sum_{\chi_0 \neq \chi \in G(R_{3})} \overline{\chi}(t_1,t_2,t_3) \sum_{j=1}^{\deg L(u,\chi)} \left(\frac{\gamma_j(\chi)}{\sqrt{q}} \right)^n.
\end{equation}
Fix $A=(t_1,t_2,t_3) \in \mathcal{M}_q / R_{3,q}$. The group $\mathcal{M}_q / R_{3,q}$ has exponent $5$. Hence, for any $\chi \in G(R_{3})/G(R_1)$, the number $\overline{\chi}(A)$ is a root of unity of order (dividing) $5$. Thus we may write
\begin{equation}
\overline{\chi}(A) = \omega_{5}^{a_{\chi,A}},
\end{equation}
where $a_{\chi,A}$ is an integer satisfying $0 \le a_{\chi,A} < 5$.  For any $0 \le i < 60$, let
\begin{equation}
F_i(x)= \frac{-1}{q^{3}} \sum_{\chi_0 \neq \chi \in G(R_{3})} \sum_{\substack{1 \le j \le \deg L(u,\chi):\\ \frac{\gamma_j(\chi)}{\sqrt{q}}= \omega_{60}^i}} x^{a_{\chi,A}}.
\end{equation}
Set $m_1= 60$, $m_2=5$ and define the following function from $\mathbb{Z}^2$ to $\mathbb{C}$:
\begin{equation}
S(k_1,k_2) = \sum_{i=1}^{m_1} \omega_{m_1}^{ik_1} F_i(\omega_{m_2}^{k_2}).
\end{equation}
Note that
\begin{equation}
\begin{split}
S(k_1,k_2) &= \frac{-1}{q^3} \sum_{i=1}^{60} \sum_{\chi_0 \neq \chi \in G(R_3)} \sum_{\substack{1 \le j \le \deg L(u,\chi):\\  \frac{\gamma_j(\chi)}{\sqrt{q}} = \omega_{60}^i }} \omega_{60}^{ik_1} \omega_{5}^{k_2 \cdot a_{\chi,A}} \\
&= \frac{-1}{q^3} \sum_{\chi_0 \neq \chi \in G(R_3)}  \overline{\chi}(A^{k_2}) \sum_{1 \le j \le \deg L(u,\chi)} (\frac{\gamma_j(\chi)}{\sqrt{q}})^{k_1} \\
& = f_q\left(k_1, k_2 t_1, k_2 t_2 + \binom{k_2}{2} t_1^2, k_2 t_3 +k_2(k_2-1)t_1t_2 +\binom{k_2}{3} t_1^3\right).
\end{split}
\end{equation}
Applying Lemma \ref{symrootlior} to $S$, we deduce that \eqref{periodicthm5}, \eqref{sym15}, \eqref{sym25} hold. \qed
\subsection{Proof of Corollary \ref{cor:char560}}\label{sec:proofformula60}
Let $q=5^r$. Let $t_1,t_2,t_3 \in \FF_q$. For any $\chi_0 \neq \chi \in G(R_3)$ and any inverse root $\gamma$ of $L(u,\chi)$, Proposition \ref{sym2} and Lemma \ref{artinunity} tell us that $\frac{\gamma}{\sqrt{q}}$ is a  root of unity of order dividing $60$. From this point we proceed as in \S\ref{sec:proofformula24}, with 24 replaced with 60. \qed
\section{A non-periodicity result}
\subsection{Auxiliary lemmas}
\begin{lem}\label{rootsunitylemma}
Let $\alpha_1,\ldots,\alpha_m \in \mathbb{C}$ be non-zero complex numbers. Define the following sequence:
\begin{equation}
a_n = \sum_{i=1}^{m}\alpha_i^n, \qquad (n\ge 1).
\end{equation}
The sequence $\{ a_n \}_{n \ge 1}$ is periodic in $n$ if and only if all the $\alpha_i$-s are roots of unity.
\end{lem}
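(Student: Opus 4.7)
The plan is straightforward. One direction is nearly immediate, and the other reduces to the linear independence of distinct geometric progressions via a Vandermonde argument.

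First I would dispatch the easy implication: if every $\alpha_i$ is a root of unity, let $N$ be the least common multiple of the orders of $\alpha_1,\ldots,\alpha_m$. Then $\alpha_i^{n+N}=\alpha_i^n$ for every $i$ and every $n\ge 1$, so $a_{n+N}=a_n$; hence $\{a_n\}_{n\ge 1}$ is periodic with period dividing $N$.

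For the converse, assume $\{a_n\}_{n\ge 1}$ is periodic with some period $N\ge 1$. Group the multiset $\{\alpha_1,\ldots,\alpha_m\}$ by equal values: let $\beta_1,\ldots,\beta_k$ be the distinct values among the $\alpha_i$'s, with multiplicities $c_1,\ldots,c_k\in\mathbb{N}_{>0}$. Then for every $n\ge 1$,
\begin{equation}
0 \;=\; a_{n+N}-a_n \;=\; \sum_{j=1}^{k} c_j\,\beta_j^n\bigl(\beta_j^N-1\bigr).
\end{equation}
Setting $d_j = c_j(\beta_j^N-1)$, I have $\sum_{j=1}^k d_j \beta_j^n = 0$ for every $n\ge 1$.

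The key step is then the standard fact that distinct geometric progressions are linearly independent: looking at the system obtained for $n=1,2,\ldots,k$ yields a Vandermonde-type matrix $(\beta_j^n)_{n,j}$, whose determinant is $\prod_j \beta_j \cdot \prod_{j<j'}(\beta_{j'}-\beta_j)\neq 0$ since the $\beta_j$ are distinct and nonzero. Consequently $d_j=0$ for all $j$, and since $c_j>0$ this forces $\beta_j^N=1$ for every $j$. Therefore each $\beta_j$, and hence each $\alpha_i$, is a root of unity. There is no real obstacle; the only point that requires a moment's care is handling repeated values of the $\alpha_i$'s, which is why the grouping into distinct $\beta_j$'s with positive integer multiplicities is essential before applying linear independence.
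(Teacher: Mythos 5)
Your proof is correct, but it takes a different route from the paper's. The paper argues via generating functions: it observes that $\sum_{n\ge 1} a_n x^n = \sum_{i=1}^m \frac{\alpha_i x}{1-\alpha_i x}$ has poles exactly at the points $\alpha_i^{-1}$, while the generating function of an $N$-periodic sequence is a rational function whose poles lie among the $N$-th roots of unity (and conversely); comparing pole sets gives both directions at once. You instead prove the easy direction by taking the lcm of the orders, and for the converse you pass to the relation $\sum_j c_j(\beta_j^N-1)\beta_j^n=0$ and invoke the linear independence of distinct geometric progressions via a Vandermonde determinant. Both arguments are sound and of comparable length. Your version is more elementary and self-contained (no appeal to the structure of rational generating functions of periodic sequences), and your explicit grouping into distinct values with positive multiplicities cleanly handles the issue of repeated $\alpha_i$'s, which the paper's phrasing handles only implicitly (the residue at $\alpha_j^{-1}$ is $-c_j\neq 0$, so repeated roots cannot cause a pole to cancel). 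The paper's version, in exchange, packages the equivalence symmetrically and makes the ``only if'' and ``if'' directions fall out of a single pole comparison.
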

\begin{proof}
This follows immediately from two observations: Firstly, the generating function $\sum_{n \ge 1} a_nx^n = \sum_{i=1}^{m} \frac{\alpha_ix}{1-\alpha_ix}$ has poles at the $\alpha_i^{-1}$-s, and at them only. Secondly, the generating function of a sequence with period $N$ has poles at some subset of the roots of unity of order dividing $N$, and vice versa.
\end{proof}
\begin{lem}\label{quadreslem}
Let $p$ be a prime $\ge 7$. Let $j \in \FF_p^{\times}$ and $\varepsilon_1, \varepsilon_2 \in \{ \pm 1\}$. Then there exists $i \in \FF_p$ such that
\begin{equation}\label{existsi}
\left( \frac{i-j}{p} \right) = \varepsilon_1, \qquad \left( \frac{i+j}{p} \right) = \varepsilon_2,
\end{equation}
where $\left(\frac{\bullet}{p} \right)$ is the Legendre symbol modulo $p$. For $p=5$ it can happen that \eqref{existsi} has no solution in $i$.
\end{lem}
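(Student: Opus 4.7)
The plan is to use a character-sum approach with $\chi(\cdot) = \left(\frac{\cdot}{p}\right)$ the Legendre symbol. First I would introduce the weighted count
\[
N_{\mathrm{all}} = \sum_{i \in \FF_p} \frac{1 + \varepsilon_1 \chi(i-j)}{2} \cdot \frac{1 + \varepsilon_2 \chi(i+j)}{2}.
\]
For $i \neq \pm j$, each summand equals $1$ precisely when $\chi(i-j) = \varepsilon_1$ and $\chi(i+j) = \varepsilon_2$, and $0$ otherwise; for $i \in \{j,-j\}$ one of the two factors is $1/2$, so each such term contributes at most $1/2$. Expanding the product and invoking the orthogonality relation $\sum_{i \in \FF_p} \chi(i-c) = 0$, together with the classical evaluation
\[
\sum_{i \in \FF_p} \chi(i^2 - j^2) = -1
\]
(valid for odd $p$ and $j \ne 0$, obtained via the substitution $i = u+j$ and the standard identity $\sum_x \chi(x(x+2j)) = -1$ for $2j \ne 0$), I get
\[
N_{\mathrm{all}} = \frac{p - \varepsilon_1 \varepsilon_2}{4}.
\]

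Next, letting $N_{\mathrm{valid}}$ denote the exact number of $i \in \FF_p$ satisfying \eqref{existsi}, and subtracting the contributions from $i = \pm j$ (each bounded by $1/2$), I obtain
\[
N_{\mathrm{valid}} \ge N_{\mathrm{all}} - 1 \ge \frac{p-1}{4} - 1 = \frac{p-5}{4}.
\]
For $p \ge 7$ the right-hand side is strictly positive, and since $N_{\mathrm{valid}}$ is a non-negative integer it must satisfy $N_{\mathrm{valid}} \ge 1$, establishing the main assertion.

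For the $p=5$ claim I would exhibit an explicit counterexample: take $j=1$ and $(\varepsilon_1,\varepsilon_2)=(-1,-1)$. The nonzero squares modulo $5$ are $\{1,4\}$, so a direct check over $i \in \{0,2,3\}$ (the values not equal to $\pm j$) shows that the pair of conditions $\chi(i-1)=\chi(i+1)=-1$ has no solution. The bounds above are tight precisely in this case, explaining the threshold $p \ge 7$. The only subtle bookkeeping is the half-integer accounting at $i = \pm j$, but this is harmless since we only need a positivity estimate.
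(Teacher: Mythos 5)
Your proposal is correct and follows essentially the same route as the paper: both form the indicator-type sum $\sum_i (1+\varepsilon_1\chi(i-j))(1+\varepsilon_2\chi(i+j))$ (yours normalized by $4$), evaluate it to $p-\varepsilon_1\varepsilon_2$ via $\sum_i\chi(i^2-j^2)=-1$, account for the boundary terms $i=\pm j$, and conclude positivity for $p\ge 7$. The $p=5$ counterexample differs only in the choice of $j$ and signs, which is immaterial.
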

\begin{proof}
We begin with the case $p \ge 7$. Consider the following sum:
\begin{equation}
S(\varepsilon_1,\varepsilon_2) = \sum_{i \in \FF_p} \left(1+\varepsilon_1 \left(\frac{i-j}{p} \right) \right) \left(1+\varepsilon_2 \left(\frac{i+j}{p} \right) \right).
\end{equation}
Each term of the sum $S(\varepsilon_1,\varepsilon_2)$ has value in $\{0,2,4\}$. By counting how often each value is achieved, we find that
\begin{equation}\label{directcalcleg}
S(\varepsilon_1,\varepsilon_2) = 4 \cdot \# \{ i \in \FF_p : \left(\frac{i-j}{p}\right) = \varepsilon_1, \left( \frac{i+j}{p} \right) = \varepsilon_2\}+2 \cdot 1_{\left( \frac{2j}{p} \right) = \varepsilon_2} + 2 \cdot 1_{\left( \frac{-2j}{p} \right) = \varepsilon_1} .
\end{equation}
On the other hand, we may expand $S(\varepsilon_1,\varepsilon_2)$ and get that
\begin{equation}\label{expandcalcleg}
S(\varepsilon_1,\varepsilon_2) = p + \varepsilon_1  \sum_{i \in \FF_p} \left(\frac{i-j}{p}\right) + \varepsilon_2 \sum_{i \in \FF_p} \left(\frac{i+j}{p} \right)+ \varepsilon_1 \varepsilon_2 \sum_{i \in \FF_p}\left( \frac{i^2-j^2}{p} \right) = p+0+0-\varepsilon_1 \varepsilon_2.
\end{equation}
Comparing \eqref{directcalcleg} and \eqref{expandcalcleg}, we get that
\begin{equation}
4 \cdot \# \{ i \in \FF_p : \left(\frac{i-j}{p}\right) = \varepsilon_1, \left( \frac{i+j}{p} \right) = \varepsilon_2\} \ge p-\varepsilon_1 \varepsilon_2 -2 -2 \ge p-5 > 0,
\end{equation}
which shows that there exists $i \in \FF_p$ as required. We conclude the proof by observing that for $p=5$, there is no $i \in \FF_p$ such that
\begin{equation}
\left( \frac{i-2}{5} \right)=1, \qquad \left( \frac{i+2}{5} \right)=1. 
\end{equation}
holds.
\end{proof}
\begin{lem}\label{p7notss}
Let $p \ge 7$ be a prime. Let $\chi \in G(R_{3,p}) \setminus G(R_{2,p})$, and let $\lambda_1,\lambda_2,\lambda_3 \in \FF_q$ be the constants associated with $\chi$ in Lemma \ref{aslem}. If $p \equiv 2 \bmod 3$ and $\lambda_2^2-4\lambda_1 \lambda_3=0$, then the roots of $L(\frac{u}{\sqrt{p}},\chi)$ are roots of unity. Otherwise, both roots of $L(\frac{u}{\sqrt{p}}, \chi)$ are not roots of unity.
\end{lem}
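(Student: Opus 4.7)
The plan is to use Lemma \ref{lstruct3} to reduce the statement to an analysis of the normalized Weil sum
$$S = \frac{1}{\sqrt{p}} \sum_{x \in \FF_p} \chi_p(ax^3 + bx),$$
where $a = \lambda_3/3 \neq 0$ and $b = \lambda_1 - \lambda_2^2/(4\lambda_3)$, so that the condition $\lambda_2^2 - 4\lambda_1\lambda_3 = 0$ is equivalent to $b=0$. Since Lemma \ref{lstruct3} identifies $L(u/(\sqrt{p}\,\omega_p^c),\chi)$ with $1+Su+u^2$ and $\omega_p^c$ is itself a root of unity, the roots of $L(u/\sqrt{p},\chi)$ are roots of unity if and only if those of $1+Su+u^2$ are. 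By the Riemann Hypothesis these form a complex-conjugate pair on the unit circle, so they are roots of unity precisely when $-S = \omega+\omega^{-1}$ for some root of unity $\omega$.

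For $b=0$ and $p \equiv 2 \pmod 3$, cubing is a bijection on $\FF_p$, so $\sum_x \chi_p(ax^3) = \sum_y \chi_p(ay) = 0$ (as $a\neq 0$), giving $S=0$ and roots $\pm i$. For $b=0$ and $p \equiv 1 \pmod 3$, I would introduce a nontrivial cubic character $\chi_3$ of $\FF_p^{\times}$ and expand
$$\sum_x \chi_p(ax^3) = \overline{\chi_3}(a)\, g(\chi_3) + \chi_3(a)\,\overline{g(\chi_3)},$$
where $g(\chi_3) = \sum_y \chi_3(y)\chi_p(y)$. The requirement $-S = \omega+\omega^{-1}$ then forces $g(\chi_3)/\sqrt{p}$ to be a root of unity, which I would rule out via the classical identity $g(\chi_3)^3 = \pm p\cdot J(\chi_3,\chi_3)$: the Jacobi sum lies in $\ZZ[\omega_3]$, and since all roots of unity in $\QQ(\omega_3) = \QQ(\sqrt{-3})$ are sixth roots of unity while $\sqrt{p} \notin \QQ(\sqrt{-3})$ for $p \geq 7$, this produces a contradiction.

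The main obstacle is the case $b \neq 0$, where the sum genuinely depends on a cubic polynomial with a nonzero linear term. Here I would first normalize via $x \mapsto kx$ to put the sum in the canonical form $\tfrac{1}{\sqrt{p}}\sum_x \chi_p(x^3+b'x)$ with $b' \in \FF_p^{\times}$ (modulo cube-class representatives of $a$ when $p\equiv 1 \pmod 3$). The plan is to exploit the factorization $x^3+b'x = x(x-j)(x+j)$ when $-b'$ is a nonzero square with $j^2=-b'$ (and the analogous factorization over a quadratic extension otherwise), and to feed this into a character-sum decomposition. This is precisely where Lemma \ref{quadreslem} enters: it supplies elements $i \in \FF_p$ with prescribed Legendre symbol data for $i \pm j$, which governs the resulting cross-terms. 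Assuming for contradiction that $-S=\omega+\omega^{-1}$, one then combines the structural description of $S$ with a finite case analysis over the short list of values that $\omega+\omega^{-1}$ can take among algebraic integers of the relevant degree inside $\QQ(\omega_p)$ (analogous to the four-subcase analysis in the proof of Lemma \ref{lfuncroot24}) to reach a contradiction. The fact that Lemma \ref{quadreslem} fails precisely for $p=5$ (with the $p=5$ case being handled separately by Lemma \ref{artinunity} via van der Geer--van der Vlugt supersingularity) confirms that this mechanism matches the hypothesis $p \geq 7$ exactly.
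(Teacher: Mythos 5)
Your reduction via Lemma \ref{lstruct3} and your treatment of the subcase $b=0$, $p\equiv 2\bmod 3$ match the paper, but the heart of the lemma --- ruling out roots of unity when $p\equiv 1\bmod 3$ or $b\neq 0$ --- is not actually proved. Two essential ideas are missing. First, after assuming $-S=\omega_n^{i}+\omega_n^{-i}$ with $(i,n)=1$, you must pin down $n$: the paper observes that $S\in\QQ(\omega_p,\sqrt p)=\QQ(\omega_{tp})$ (with $t=1$ or $4$ according to $p\bmod 4$), hence $\QQ(\omega_n+\omega_n^{-1})\subseteq\QQ(\omega_{tp}+\omega_{tp}^{-1})$, and a degree comparison forces $n\mid 2(2,t)p$. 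Your ``finite case analysis over the short list of values that $\omega+\omega^{-1}$ can take'' glosses over exactly this step, and here the list is not short: it contains all $\pm(\omega_p^{j}+\omega_p^{-j})$, so no numerical check in the style of Lemma \ref{lfuncroot24} is available. Second, once $\omega_n^{i}+\omega_n^{-i}$ is written in terms of $\omega_p^{j}$ (and possibly $\omega_4$) and $\sqrt p$ is replaced by the quadratic Gauss sum, the paper compares coefficients of the $\omega_p^{i}$ --- using that their only linear relation is that they sum to zero --- to obtain the pointwise identity $\#\{x\in\FF_p:ax^3+bx=i\}=1+\varepsilon_1\left(\frac{i-j}{p}\right)+\varepsilon_2\left(\frac{i+j}{p}\right)$. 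It is to \emph{this} identity that Lemma \ref{quadreslem} applies: for $j\neq 0$ it produces an $i$ making the right-hand side negative, and for $j=0$ one is reduced to $ax^3+bx$ being a permutation polynomial, which Dickson's classification excludes under the standing hypotheses. Your proposed use of Lemma \ref{quadreslem}, with $j$ a square root of $-b'$ coming from a factorization of $x^3+b'x$, misidentifies the role of $j$ (in the paper $j$ is the exponent of $\omega_p$ appearing in the putative root of unity), and the ensuing ``character-sum decomposition'' is not specified in a way that could be carried out.

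There is also a local gap in your $b=0$, $p\equiv 1\bmod 3$ subcase: writing $S\sqrt p=z+\bar z$ with $z=\overline{\chi_3}(a)\,g(\chi_3)$, the equation $S=-(\omega+\omega^{-1})$ only says that the real parts of $z/\sqrt p$ and $-\omega$ agree; it does not force $z/\sqrt p$, hence $g(\chi_3)/\sqrt p$, to be a root of unity, so your Jacobi-sum contradiction never gets off the ground. (The paper does not treat this subcase separately; it is absorbed into the general argument sketched above.)
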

\begin{proof}
From Lemma \ref{lstruct3} we have
\begin{equation}
L\left(\frac{u}{\sqrt{p}\omega_{p}^c},\chi\right) = 1 + \frac{\sum_{x \in \FF_p} \chi_{p}(ax^3+bx)}{\sqrt{p}} + u^2
\end{equation}
for $a=\frac{\lambda_3}{3} \in \FF_p^{\times}$, $b=\lambda_1- \frac{\lambda_2^2}{4\lambda_3}$ and some $c \in \FF_p$. We first assume that $p \equiv 2 \bmod 3$ and that $\lambda_2^2-4\lambda_1 \lambda_3=0$. This implies that $b=0$ and that the map $x\mapsto ax^3$ is a permutation of $\FF_p$. Thus, $\sum_{x\in \FF_p} \chi_p(ax^3+bx) = 0$ and the roots of $L(\frac{u}{\sqrt{p}})$ are both roots of unity.

From now on we assume that either $p \equiv 1 \bmod 3$ or that $\lambda_1 - \frac{\lambda_2^2}{4\lambda_3} \neq 0$ holds. Assume for contradiction's sake that some root of $L(\frac{u}{\sqrt{p}})$ is a root of unity. The second root of $L(\frac{u}{\sqrt{p}})$ must also be a root of unity, since the product of the two roots is a root of unity. Let $\omega_n^i, \omega_n^{-i}$ be the roots of $L(\frac{u}{\sqrt{p}\omega_{p}^c})$, where $(i,n)=1$. Consider 
\begin{equation}\label{eq:sexpress}
S=\frac{\sum_{x \in \FF_p} \chi_p(ax^3+bx)}{\sqrt{p}} = -(\omega_n^i +\omega_n^{-i}).
\end{equation}
The quadratic Gauss sum $g_p=\sum_{a \in \FF_p} \left( \frac{a}{p} \right) \omega_p^{a}$ satisfies $g_p \in \mathbb{Q}(\omega_p)$ and $g_p^2 = (-1)^{\frac{p-1}{2}}p$ \cite[Prop.~6.3.2]{ireland1990}. Thus we have 
\begin{equation}\label{sfield}
S \in \mathbb{Q}(\omega_p, \sqrt{p}) = \mathbb{Q} (\omega_{t p}), \qquad t=\begin{cases} 1 & \mbox{if $p \equiv 1 \bmod 4$,} \\ 4 & \mbox{if $p \equiv 3 \bmod 4$.} \end{cases}
\end{equation}
Moreover, $S$ is real since $\omega_n^i + \omega_n^{-i}$ is real, hence it follows from \eqref{sfield} that in fact
\begin{equation}\label{scontainedreal}
S \in \QQ(\omega_{tp}+\omega_{tp}^{-1}).
\end{equation}
Since $S$ generates $\mathbb{Q}(\omega_n+\omega_n^{-1})$, we have from \eqref{scontainedreal} that
\begin{equation}\label{twofieldscont}
\QQ(\omega_n+\omega_n^{-1}) \subseteq \QQ(\omega_{tp}+\omega_{tp}^{-1}).
\end{equation}
We have
\begin{equation}\label{fieldcont}
\QQ(\omega_n+\omega_n^{-1}) = \QQ(\omega_n+\omega_n^{-1}) \cap \QQ(\omega_{tp}+\omega_{tp}^{-1}) \subseteq \QQ(\omega_n) \cap \QQ(\omega_{tp}) = \QQ(\omega_{(n,tp)}).
\end{equation}
Since $\QQ(\omega_n+\omega_n^{-1}) \subseteq \mathbb{R}$, \eqref{fieldcont} implies that
\begin{equation}\label{twoqreals}
\QQ(\omega_n+\omega_n^{-1}) = \QQ(\omega_{(n,tp)}+\omega_{(n,tp)}^{-1})
\end{equation}
If $n=1$ or $n=2$, \eqref{twoqreals} is trivial. If $n>2$, by comparing degrees over $\QQ$, \eqref{twoqreals} implies that
\begin{equation}\label{divdim}
\phi(n) = \phi((n,tp)),
\end{equation}
which in turn implies that $n \mid tp$. Both for $n=1,2$ and $n>2$, we have
\begin{equation}
n \mid 2(2,t)p.
\end{equation}
If $p\equiv 1 \bmod 4$ then $n \mid 2p$, hence for some $j \in \ZZ$ we have
\begin{equation}\label{p14simp}
\omega^i_n + \omega_n^{-i} = \pm(\omega_p^j + \omega_p^{-j}), \qquad \sqrt{p} = \sum_{i \in \FF_p} \left( \frac{i}{p} \right) \omega_p^i.
\end{equation}
Plugging \eqref{p14simp} in \eqref{eq:sexpress} we get
\begin{equation}\label{characterequation1}
\begin{split}
\sum_{x \in \FF_p} \chi_p(ax^3+bx) &= \pm (\omega_p^j + \omega_p^{-j})\sum_{i \in \FF_p} \left( \frac{i}{p} \right) \omega_p^i \\
&=\pm \sum_{i \in \FF_p} \left( \frac{i}{p} \right) (\omega_p^{i+j} + \omega_p^{i-j}).
\end{split}
\end{equation}
If $p \equiv 3 \bmod 4$ then $n \mid 4p$, hence for some $j,k \in \ZZ$ we have
\begin{equation}\label{p34simp}
\omega^i_n + \omega_n^{-i} = \omega_p^j \omega_4^k + \omega_p^{-j}\omega_4^{-k}, \qquad \sqrt{p} = -\omega_4 \sum_{i \in \FF_p} \left( \frac{i}{p} \right) \omega_p^i.
\end{equation}
Plugging \eqref{p34simp} in \eqref{eq:sexpress} we get
\begin{equation}\label{p3ijk}
\sum_{x \in \FF_p} \chi_p(ax^3+bx) =  \omega_4 \left( \omega_p^j \omega_4^k + \omega_p^{-j}\omega_4^{-k} \right) \sum_{i \in \FF_p} \left( \frac{i}{p} \right) \omega_p^i.
\end{equation}
If $k$ is even, \eqref{p3ijk} shows that $\omega_4 \in \QQ(\omega_p)$, a contradiction. Hence we may assume that $k$ is odd. In this case \eqref{p3ijk} is equivalent to
\begin{equation}\label{eq:characterequation2}
\begin{split}
\sum_{x \in \FF_p} \chi_p(ax^3+bx) &= \pm ( \omega_p^j - \omega_p^{-j}) \left( \sum_{i \in \FF_p} \left( \frac{i}{p} \right) \omega_p^i \right)\\
&= \pm \sum_{i \in \FF_p} \left( \frac{i}{p} \right)\left( \omega_p^{i+j} - \omega_p^{i-j} \right). 
\end{split}
\end{equation}
The only non-trivial linear relation among $\{ \omega_p^i : 0 \le i \le p-1 \}$ is that their sum is $0$. Hence, we may compare the coefficient of $\omega_p^i$ in both sides of \eqref{characterequation1} (if $p\equiv 1 \bmod 4$) or of \eqref{eq:characterequation2} (if $p \equiv 3 \bmod 4$), up to some constant $\lambda \in \mathbb{C}$. This gives us the following equality for all $i \in \FF_q$:
\begin{equation}\label{nonperiodeq}
\# \{ x \in \FF_p : ax^3 + bx = i \} = \lambda + \varepsilon_1 \left( \frac{i-j}{p} \right) + \varepsilon_2 \left( \frac{i+j}{p} \right),
\end{equation}
where $\lambda$ is some complex number and $\varepsilon_1,\varepsilon_2 \in \{ \pm 1\}$ satisfy 
\begin{equation}
\varepsilon_1 \varepsilon_2 = \begin{cases} 1 & \text{if }p \equiv 1 \bmod 4, \\ -1 & \text{if }p \equiv 3 \bmod 4.\end{cases}
\end{equation}
Summing \eqref{nonperiodeq} over all $i \in \FF_p$ we see that necessarily $\lambda = 1$. If $j \neq 0$, then Lemma \ref{quadreslem} shows that there exists $i \in \FF_p$ such that $\left( \frac{i-j}{p} \right) = -\varepsilon_1, \left( \frac{i+j}{p} \right) = -\varepsilon_2$, in which case the right hand side of \eqref{nonperiodeq} is negative while the left hand side is non-negative, which gives a contradiction. Thus, we must have $j=0$.

If $j=0$ and $p \equiv 1 \bmod 4$, then for any $i \in \FF_q$ such that $\left(\frac{i}{p} \right) = -\varepsilon_1 = -\varepsilon_2$ we get that the right hand side of \eqref{nonperiodeq} is equal to $-1$, while the left hand side is non-negative, which gives a contradiction. Thus, we must have $j=0$ and $p \equiv 3 \bmod 4$. In that case, $\varepsilon_1 = -\varepsilon_2$ and \eqref{nonperiodeq} simplifies to
\begin{equation}
\# \{ x \in \FF_p : ax^3 + bx = i \} = 1
\end{equation}
for all $i \in \FF_p$, which means that $ax^3+bx$ is a \emph{permutation polynomial} on $\FF_p$. Dickson classified permutation polynomials of degree $3$ \cite[p.~63]{dickson1958}, and in characteristic $>3$ they exist only when $p \equiv 2 \bmod 3$ and are given by $a_1(x+a_2)^3+a_3$. By assumption, either $p \equiv 1 \bmod 3$, in which case we do not have permutation polynomials, or $b \neq 0$, in which case $ax^3+bx$ is not of the form $a_1(x+a_2)^3+a_3$. We reach a contradiction, hence our assumption is wrong and none of the roots of $L(u/\sqrt{q}, \chi)$ is a root of unity.
\end{proof}
\subsection{Proof of Theorem \ref{conv2460}}
By Lemma \ref{lemformpsi} we have the following formula for $f_{q,\ell}(n)$:
\begin{equation}
f_{q,\ell}(n) = -q^{-\ell}\sum_{\chi_0 \neq \chi \in G(R_{\ell,q})} \sum_{i=1}^{\deg L(u,\chi) }\left( \frac{\gamma_i(\chi)}{\sqrt{q}} \right)^n,
\end{equation}
where $\gamma_i(\chi)$-s are the inverse roots of $L(u,\chi)$. Lemma \ref{rootsunitylemma} shows that $f_{q,\ell}(n)$ is not periodic if and only if there is some $\chi_0 \neq \chi \in G(R_{\ell,q})$ such that at least one the ratios $\gamma_i(\chi)/\sqrt{q}$ is not a root of unity. So our aim is to prove that such a character exists in all the cases of Theorem \ref{conv2460}. If $q=p^r$, then it is in fact enough to prove the existence of such a character for $q=p$, since given $\chi_0 \neq \chi \in G(R_{\ell,p})$ such that at least one the ratios $\gamma_i(\chi)/\sqrt{p}$ is not a root of unity, we may take its lift $\chi^{(r)} \in G(R_{\ell,q})$ to $\FF_q$, and \eqref{liftlform} guarantees that $L(u/\sqrt{p^r},\chi^{(r)})$ also has a root which is not a root of unity. This is what we show now.
\begin{enumerate}
\item $p=2$, $\ell \ge 4$: Lemma \ref{structlem} shows that the group $\mathcal{M}_2 / R_{4}$ is isomorphic to $(\mathbb{Z}/8\mathbb{Z}) \times (\mathbb{Z}/2\mathbb{Z})$ (the unique abelian group of order $16$ and exponent equal to $8$). It is generated, for example, by the elements $T+1$, $T^3+1$ of order $8$ and $2$, respectively. Consider the character $\chi \in G(R_{4,2}) \setminus G(R_{3,2}) \subseteq G(R_{\ell,2})$ defined by
\begin{equation}
\chi(1,0,0,0) = \omega_8, \quad \chi(0,0,1,0)=1.
\end{equation}
The relations
\begin{equation}
\begin{split}
(0,1,0,0)=(1,0,0,0)^2, & \quad(0,1,1,0) = (1,0,0,0)^2 (0,0,1,0), \\
(1,0,1,0)=(1,0,0,0)^5 (0,0,1,0), & \quad (1,1,0,0) = (1,0,0,0)^7 (0,0,1,0),\\
(1,1,1,0)=(1,0,0,0)^3 &
\end{split}
\end{equation}
imply that
\begin{equation}
L\left(\frac{u}{\sqrt{2}},\chi\right)=1+\frac{1+\omega_8}{\sqrt{2}} u + \frac{1+\omega_8}{\sqrt{2}}u^2 + \omega_8 u^3 = (1+iu)(1 +(\frac{1+\omega_8}{\sqrt{2}}-i)u - \omega_8 i u^2),
\end{equation}
which has two roots which are not even algebraic integers.
\item $p=3$, $\ell \ge 3$: Lemma \ref{structlem} shows that the group $\mathcal{M}_3 / R_{3}$ is isomorphic to $(\mathbb{Z}/9\mathbb{Z}) \times (\mathbb{Z}/3\mathbb{Z})$ (the unique abelian group of order $27$ and exponent equal to $9$). It is generated, for example, by the elements $T+1$ and $T^2+1$ of order $9$ and $3$, respectively. Consider the character $\chi \in G(R_{3,3}) \setminus G(R_{2,3}) \subseteq G(R_{\ell,3})$ defined by
\begin{equation}
\chi(1,0,0) = \omega_9, \quad \chi(0,1,0)=1.
\end{equation}
The relations
\begin{equation}
\begin{split}
(2,0,0)=(1,0,0)^8(0,1,0)^2, & \quad(0,2,0) = (0,1,0)^2, \\
(1,1,0)=(1,0,0)^7 (0,1,0), & \quad (1,2,0) = (1,0,0)^4 (0,1,0)^2,\\
(2,1,0)=(1,0,0)^2 &, \quad (2,2,0)=(1,0,0)^5 (0,1,0)
\end{split}
\end{equation}
imply that
\begin{equation}
L\left(\frac{u}{\sqrt{3}},\chi\right)=1+\frac{1+\omega_9+\overline{\omega_9}}{\sqrt{3}} u + u^2,
\end{equation}
which has two roots which are not even algebraic integers
\item $p=5$, $\ell \ge 4$: By taking $\ell=4,\lambda_1=\lambda_2=\lambda_3=0,\lambda_4 = 1$ in Lemma \ref{aslem}, we get that the following function is a character of $R_{4,5}$:
\begin{equation}
\chi(a_1,a_2,a_3,a_4) = \omega_5^{a_1^4 +a_1^2 a_2 -a_1 a_3 -\frac{a_2^2}{2} + a_4}.
\end{equation}
The $L$-function of $\chi$ is
\begin{equation}
L\left(\frac{u}{\sqrt{5}},\chi\right) = 1+ \frac{1+4\omega_5}{\sqrt{5}}u - \frac{1+4\omega_5^{4}}{\sqrt{5}} u^2 - u^3 = (1+u)(1+\frac{1-\sqrt{5}	+4\omega_5}{\sqrt{5}}u-u^2),
\end{equation}
which has two roots which are not roots of unity.
\item $p \ge 7$, $\ell \ge 3$: This case is treated in Lemma \ref{p7notss}, which shows that if $p \equiv 1 \bmod 3$ we may take any $\chi \in G(R_{3,p}) \setminus G(R_{2,p})$, and if $p \equiv 2 \bmod 3$ then most choices of $\chi \in G(R_{3,p}) \setminus G(R_{2,p})$ work. 
\end{enumerate}
\qed
\section{Supersingular curves}\label{sec:cyclotomic}
Let $q=p^r$ be a prime power. A curve $C$ defined over $\FF_q$ is called supersinguar if its Jacobian is isogenous to a product of supersingular elliptic curves. This condition is known to be the same as saying that the roots of the zeta function of $C$,
\begin{equation}
Z_C(u) = \exp\left( \sum_{n \ge 1} \frac{\# C(\FF_{q^n}) u^n}{n}\right),
\end{equation}
are of the form $\sqrt{q}$ times roots of unity.

We use an analogue of cyclotomic fields in finite fields, due to Carlitz \cite{carlitz1938} and Hayes \cite{hayes1974}, and we follow Rosen \cite[Ch.~12]{rosen2002} in our notation. For a given $\ell \ge 1$, we consider the following fields. Let $K_{\ell}=\FF_q(T)(\Lambda_{T^{-\ell-1}})$ be the abelian extension of $\FF_q(T)$ obtained by adjoining the $T^{-\ell-1}$--torsion elements of the Carlitz module $\Lambda$. The extension $K_{\ell}/\FF_q(T)$ is totally ramified at the prime at infinity, and unramified everywhere else. We have $[K_{\ell}:\FF_q(T)]=q^{\ell}(q-1)$, and $K_{\ell}$ has a unique subfield $L_{\ell}$ with $[L_{\ell}:\FF_q(T)]=q^{\ell}$, which is called the maximal real subfield of $K_{\ell}$. The Dedekind zeta function of $L_{\ell}$ factors as the product of $L(u,\chi)$ for $\chi \in G(R_{\ell,q})$, after completing these $L$-functions at the prime at infinity. This is because the Dirichlet characters $G(R_{\ell,q})$ can be considered as the representations of $\mathrm{Gal}(L_{\ell}/\FF_q(T))$. The field $L_{\ell}$ is the function field of a certain smooth projective curve, which we denote by $C_{\ell}$. The Dedekind zeta function of $L_{\ell}$ is the same the zeta function of $C_{\ell}$, so that
\begin{equation}\label{prod:zeta}
Z_{C_{\ell}}(u) = \prod_{\chi \in G(R_{\ell,q})} L^{*}(u,\chi).
\end{equation}
Here $L^{*}$ is the completed $L$-function. As $\chi \in G(R_{\ell,q})$ vanishes on the prime at infinity unless $\chi$ is trivial, we have 
\begin{equation}
L^{*}(u,\chi) = \begin{cases} L(u,\chi) & \mbox{if $\chi_0 \neq \chi \in G(R_{\ell})$,} \\ L(u,\chi)(1-u)^{-1} & \mbox{if $\chi = \chi_0$.}\end{cases}  
\end{equation}
From Lemma \ref{lemformpsi} and \eqref{prod:zeta}, it follows that 
\begin{equation}
\psi_q(n,\underbrace{0,\ldots,0}_{\ell}) = \frac{\#C_{\ell}(\FF_{q^n})-1}{q^{\ell}}.
\end{equation}
The number
\begin{equation}
\pi_q(n,\underbrace{0,\ldots,0}_{\ell})
\end{equation}
can be interpreted as the number of primes in $\FF_q[T]$ of degree $n$ that split completely in $L_{\ell}$. Let $g$ be the genus of $C_{\ell}$. As $Z_{C_{\ell}}(u) = P_{C_{\ell}}(u)/((1-qu)(1-u))$ where $P_{C_{\ell}}$ is a polynomial of degree $2g$, and $\deg L(u,\chi) = k$ if $\chi \in G(R_{k,q}) \setminus G(R_{k-1,q})$, \eqref{prod:zeta} implies that
\begin{equation}
2g = \sum_{k=1}^{\ell} (k-1) (q^k - q^{k-1}) = (\ell-1)q^{\ell} - (q+q^2+\ldots +q^{\ell-1}).
\end{equation} 
Note that $C_{1}$ is of genus $0$. By Proposition \ref{sym2}, $C_{2}$ is supersingular. By Theorem \ref{conv2460}, $C_{3}$ is supersingular if and only if $p \in \{2,5\}$, and $C_{\ell}$ is not supersingular for $\ell \ge 4$. 

Finally we explain how to write an equation for an affine model of $C_{\ell}$. Define $g_n(X) \in \FF_q(T)[X]$ recursively by $g_{0}(X)=1$, $g_1(X)=X^q+\frac{X}{T}$, $g_{n+1}(X) = g_{n}(X)^q+\frac{g_n(X)}{T}$. The irreducible polynomial generating $K_{\ell}$ is $\frac{g_{\ell+1}(X)}{g_{\ell}(X)}$, which is a function field analogue of a cyclotomic polynomial. Induction shows that $g_i(X) = X h_i(X^{q-1})$ for a unique $h_i\in \FF_q(T)[X]$, and so $\frac{g_{\ell+1}(X)}{g_{\ell}(X)} = \frac{h_{\ell+1}}{h_{\ell}}(X^{q-1})$. As $[K_{\ell}:L_{\ell}]=q-1$, we find that $L_{\ell}$ is the splitting field of $\frac{h_{\ell+1}}{h_{\ell}}(X)$. So $\frac{h_{\ell+1}}{h_{\ell}}(X) = 0$ is an affine model for $C_{\ell}$. For instance, a short calculation gives the following affine model for $C_{1}$:
\begin{equation}
\sum_{i=1}^{q} X^{i}\left(\frac{-1}{T}\right)^{q-i} +\frac{1}{T} = 0.
\end{equation}
\section*{Acknowledgments}
I thank Ze'ev Rudnick for useful discussions, Lior Bary-Soroker for suggesting Lemma \ref{symrootlior} and an anonymous referee for constructive comments. I was supported by the European Research Council under the European Union's Seventh Framework Programme (FP7/2007-2013) / ERC grant agreement n$^{\text{o}}$ 320755.
\appendix
\section{Computing \texorpdfstring{$\pi_q$}{pi q} from \texorpdfstring{$\psi_q$}{psi q}}\label{secmob}
\subsection{M\"obius inversion}
We explain how exact formulas for $\psi_q(n,t_1,\ldots,t_{\ell})$ yield exact formulas for $\pi_q(n,t_1,\ldots, t_{\ell})$. We introduce the following notation. For a given arithmetic function
\begin{equation}
\alpha \colon \mathcal{M}_{q} \to \mathbb{C}
\end{equation}
and a positive integer $\ell$, we attach a summatory function 
\begin{equation}
S_{\alpha, \ell}\colon \mathbb{N}_{>0} \times \mathcal{M}_q / R_{\ell} \to \mathbb{C}
\end{equation}
defined by
\begin{equation}
S_{\alpha, \ell}(n,A) = \sum_{f \in \mathcal{M}_{n,q}, f\equiv A \bmod R_{\ell}} \alpha(f).
\end{equation}
In this notation,
\begin{equation}
\begin{split}
\psi_q(n,t_1,\ldots, t_{\ell}) &= S_{\Lambda_q, \ell}(n,(t_1,\ldots,t_{\ell})), \\
\pi_q(n,t_1,\ldots, t_{\ell}) &= S_{1_{\mathcal{P}_q},\ell}(n,(t_1,\ldots, t_{\ell})).
\end{split}
\end{equation}
\begin{lem}\label{lemmob}
Let $n,\ell$ be two positive integers. Let $A \in \mathcal{M}_{n,q}$. We have
\begin{equation}\label{idenmob}
S_{1_{\mathcal{P}_q},\ell}(n,A) = \frac{\sum_{d \mid n} \mu(d) \sum_{B\in \mathcal{M}_q / R_{\ell} : B^d \equiv A \bmod R_{\ell}} S_{\Lambda_q, \ell}(\frac{n}{d},B)}{n}.
\end{equation}

\end{lem}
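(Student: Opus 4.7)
The plan is to derive the identity \eqref{idenmob} by a twisted Möbius inversion, where the twist comes from the fact that we are tracking congruence classes modulo $R_\ell$ and taking $k$-th powers in the group $\mathcal{M}_q/R_\ell$.

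First I would unpack the definition of $S_{\Lambda_q,\ell}$. Since $\Lambda_q$ vanishes outside prime powers, writing an element of $\mathcal{M}_{n,q}$ as $P^k$ with $P\in\mathcal{P}_q$ of degree $d=n/k$, and noting that the condition $P^k\equiv A\bmod R_\ell$ depends only on the class of $P$ in $\mathcal{M}_q/R_\ell$, one obtains
\begin{equation}
S_{\Lambda_q,\ell}(n,A) \;=\; \sum_{d\mid n} d \sum_{\substack{B\in \mathcal{M}_q/R_\ell \\ B^{n/d}\equiv A\bmod R_\ell}} S_{1_{\mathcal{P}_q},\ell}(d,B).
\end{equation}
This is the direct analogue of $\psi_q(n)=\sum_{d\mid n}d\,\pi_q(d)$, refined by the congruence class.

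Next I would apply Möbius inversion. Substituting the formula above into the right-hand side of \eqref{idenmob} and setting $n=dkm$, the double sum over classes $B,C$ with $B^d\equiv A$ and $C^k\equiv B$ collapses, because $\mathcal{M}_q/R_\ell$ is a group and so $C^k$ uniquely represents $B$; the combined condition becomes $C^{dk}\equiv A$. Grouping terms by $m$ and $e:=dk=n/m$, one obtains
\begin{equation}
\sum_{d\mid n}\mu(d)\sum_{B:\,B^d\equiv A} S_{\Lambda_q,\ell}(n/d,B)
\;=\; \sum_{m\mid n} m\!\!\sum_{\substack{C:\,C^{n/m}\equiv A}}\!\! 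S_{1_{\mathcal{P}_q},\ell}(m,C) \cdot \Bigl(\sum_{d\mid n/m}\mu(d)\Bigr).
\end{equation}
The inner Möbius sum equals $1_{n/m=1}$, so only $m=n$ survives, leaving $n\,S_{1_{\mathcal{P}_q},\ell}(n,A)$, as desired.

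The only subtle point—and the one I would spell out carefully—is the composition step $B^d\equiv A$ together with $C^k\equiv B$ implies $C^{dk}\equiv A$, and that for fixed $C$ the class $B=C^k$ is uniquely determined. This uses precisely the group structure of $\mathcal{M}_q/R_\ell$ recalled in \S\ref{erhayes}. Everything else is bookkeeping with the standard identity $\sum_{d\mid m}\mu(d)=1_{m=1}$.
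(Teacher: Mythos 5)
Your proof is correct and follows essentially the same route as the paper's: both arguments come down to the group structure of $\mathcal{M}_q/R_{\ell}$ (so that the condition $P^k \equiv A \bmod R_{\ell}$ can be sorted by the class of $P$) together with the identity $\sum_{d\mid m}\mu(d)=1_{m=1}$. The only difference is organizational: you first establish the forward identity $S_{\Lambda_q,\ell}(n,A)=\sum_{d\mid n}d\sum_{B:\,B^{n/d}\equiv A}S_{1_{\mathcal{P}_q},\ell}(d,B)$ (which the paper records only as a remark) and then invert it formally, whereas the paper expands both sides as sums over primes and compares the contribution of each fixed prime directly.
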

\begin{proof}
We may write $S_{1_{\mathcal{P}_q},\ell}(n,A)$, $S_{\Lambda_q, \ell}(n,A)$ as sums over primes of degree dividing $n$:
\begin{equation}
\begin{split}
S_{1_{\mathcal{P}_q}, \ell}(n,A) &= \sum_{d \mid n} \sum_{P \in \mathcal{P}_q, \deg P =d} 1_{d = n} \cdot 1_{P \equiv A \bmod R_{\ell}},\\
S_{\Lambda_q, \ell}(n,A) &= \sum_{d \mid n}\sum_{P \in \mathcal{P}_q, \deg P =d} d \cdot 1_{P^{n/d} \equiv A \bmod R_{\ell}}.
\end{split}
\end{equation}
Fix $i \mid n$ and a prime $P \in \mathcal{P}_q$ of degree $i$. To establish Lemma \ref{lemmob}, it suffices to show that $P$ contributes the same weight to each side of \eqref{idenmob}. The contribution of $P$ to the left hand side of \eqref{idenmob} is
\begin{equation}\label{cont1}
1_{i = n} \cdot 1_{P \equiv A \bmod R_{\ell}}.
\end{equation}
The contribution of $P$ to the right hand side is
\begin{equation}
\frac{\sum_{d \mid n:\, i \mid \frac{n}{d}} \mu(d) \sum_{B \in \mathcal{M}_q/ R_{\ell}: B^d \equiv A \bmod R_{\ell}} i \cdot 1_{P^{\frac{n}{di}} \equiv B \bmod R_{\ell}}}{n},
\end{equation}
which may be simplified as
\begin{equation}\label{cont2}
\begin{split}
\frac{i}{n} \sum_{d \mid n:\, i \mid \frac{n}{d}} \mu(d) \sum_{B \in \mathcal{M}_q/ R_{\ell}:B^d \equiv A \bmod R_{\ell}} 1_{P^{\frac{n}{di}} \equiv B \bmod R_{\ell}}&=\frac{i}{n} \sum_{d \mid \frac{n}{i}} \mu(d) 1_{P^{\frac{n}{i}} \equiv A \bmod R_{\ell}}\\
&= \frac{i}{n} \cdot  1_{P^{\frac{n}{i}} \equiv A \bmod R_{\ell}}  \sum_{d \mid \frac{n}{i}} \mu(d) \\
&= \frac{i}{n} \cdot 1_{P^{\frac{n}{i}} \equiv A \bmod R_{\ell}} \cdot 1_{i = n} = 1_{i=n} \cdot 1_{P \equiv A \bmod R_{\ell}}.
\end{split}
\end{equation}
Here we have used the property $\sum_{d \mid m} \mu(d) = 1_{m=1}$ of the M\"obius function. The expressions \eqref{cont1} and \eqref{cont2} coincide, as needed. 
\end{proof}
\begin{remark}
Similarly, it may be shown that
\begin{equation}
S_{\Lambda_q,\ell}(n,A) = \sum_{d \mid n}\frac{n}{d}  \sum_{B\in \mathcal{M}_q / R_{\ell} : B^d \equiv A \bmod R_{\ell}} S_{1_{\mathcal{P}_q},\ell}(\frac{n}{d},B).
\end{equation}
\end{remark}
\subsection{Solving equations in \texorpdfstring{$\mathcal{M}_q /R_{\ell}$}{Mq R l}}
In order to make use of Lemma \ref{lemmob}, we describe the following sets. Given $A \in \mathcal{M}_q /R_{\ell}$ and $d \ge 1$, let
\begin{equation}
S_{A,d,\ell,q} = \{ B \in \mathcal{M}_q / R_{\ell} : B^d \equiv A \bmod R_{\ell} \}.
\end{equation}
\begin{lem}\label{solvingpowerbda}
Let $q=p^r$ be a prime power. Let $\ell \ge 1$ and $A=(a_1,\ldots,a_{\ell}) \in \mathcal{M}_q / R_{\ell}$. Let $d \ge 1$ and write it as $d = d' \cdot k$ with $d'$ the largest power of $p$ dividing $d$. Let $e$ be the smallest non-negative integer such that $q^e \ge d'$. If $a_i \neq 0$ for some $d' \nmid i$, then $S_{A,d,\ell,q}=\varnothing$. Otherwise,
\begin{equation}\label{sathmval}
S_{A,d,\ell,q} =  \{ (b_1, \ldots, b_{\ell}) \in \mathcal{M}_q / R_{\ell} \mid \forall 1\le j \le \lfloor \frac{\ell}{d'} \rfloor : b_{j} = \sum\limits_{ \sum\limits_{i=1}^{ \lfloor \frac{\ell}{d'} \rfloor } i c_i = j} \prod\limits_{i=1}^{\lfloor \frac{\ell}{d'} \rfloor} a_{i d'}^{\frac{q^{e}}{d'} c_i} \binom{\frac{1}{k}}{\sum c_i} \binom{\sum c_i}{c_1,c_2,\ldots,c_{\lfloor \frac{\ell}{d'} \rfloor}} \}.
\end{equation}
\end{lem}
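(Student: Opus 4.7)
The plan is to decompose the $d$-th power map on the $p$-group $\mathcal{M}_q/R_\ell$ via the factorization $d = d' k$ with $d' = p^s$ and $\gcd(k,p)=1$, writing $B^d = (B^k)^{d'}$. Since $X\mapsto X^k$ is a bijection on any finite $p$-group, solving $B^d=A$ is equivalent to first finding $C$ with $C^{d'}=A$ and then taking $B = C^{1/k}$.

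The first step is to analyse $C\mapsto C^{d'}$ using the power-series interpretation of \eqref{powermqr}: the $i$-th coordinate of $C^{d'}$ equals $[T^i](1+c_1 T+\cdots+c_\ell T^\ell)^{d'}\bmod T^{\ell+1}$. Because $d'=p^s$ is a power of the characteristic, iterated Frobenius collapses this to $1+c_1^{d'} T^{d'}+c_2^{d'} T^{2d'}+\cdots$, so the $i$-th coordinate of $C^{d'}$ equals $c_{i/d'}^{d'}$ when $d'\mid i\le\ell$ and $0$ otherwise. Consequently $C^{d'}=A$ forces $a_i=0$ for every $i\le\ell$ with $d'\nmid i$, which is the emptiness case of the lemma; under this necessary condition the equation reduces to $c_j^{d'}=a_{jd'}$ for $1\le j\le \lfloor \ell/d'\rfloor$, whose unique solution in $\FF_q$ is $c_j = a_{jd'}^{q^e/d'}$ (from $x^{q^e}=x$ on $\FF_q$ together with the fact that $q^e/d'$ is a non-negative integer power of $p$ by the definition of $e$). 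The remaining coordinates $c_{\lfloor \ell/d'\rfloor+1},\ldots,c_\ell$ are unconstrained.

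The second step recovers $B=C^{1/k}$ via the binomial series. Inverting $(1+b_1 T+\cdots+b_\ell T^\ell)^k = 1+c_1 T+c_2 T^2+\cdots$ yields
\[
b_j = [T^j]\bigl(1+c_1 T+c_2 T^2+\cdots+c_\ell T^\ell\bigr)^{1/k} = \sum_{\substack{n_1,n_2,\ldots\ge 0\\ \sum_i i n_i=j}} \binom{1/k}{\sum n_i}\binom{\sum n_i}{n_1,n_2,\ldots}\, c_1^{n_1} c_2^{n_2}\cdots.
\]
For $1\le j\le \lfloor \ell/d'\rfloor$, this expression involves only $c_1,\ldots,c_j$, all determined by Step 1; substituting $c_i = a_{id'}^{q^e/d'}$ yields exactly \eqref{sathmval}. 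For $j>\lfloor \ell/d'\rfloor$, the coordinate $b_j$ is genuinely free: indeed $(B^k)_j$ depends only on $b_1,\ldots,b_j$, so $(B^d)_i = ((B^k)_{i/d'})^{d'}$ for $d'\mid i\le\ell$ depends only on $b_1,\ldots,b_{i/d'}\subseteq b_1,\ldots,b_{\lfloor \ell/d'\rfloor}$, and varying the higher $b_j$'s leaves $B^d$ unchanged modulo $R_\ell$.

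The main technical subtlety I would have to address is the interpretation of $\binom{1/k}{n}$ in characteristic $p$. Since $\gcd(k,p)=1$, one may pick an integer $m$ with $km\equiv 1\pmod{p^t}$ for $t\ge \lceil \log_p(\lfloor \ell/d'\rfloor+1)\rceil$, which by Lemma \ref{structlem} forces $X^{km}=X$ on $\mathcal{M}_q/R_{\lfloor \ell/d'\rfloor}$, and define $\binom{1/k}{n}:=\binom{m}{n}\bmod p$; this is independent of the choice of $m$ in that residue class (e.g.\ by Lucas' theorem, or equivalently by the continuous extension to $\mathbb{Z}_p$ of the integer-valued polynomial $\binom{x}{n}\in\mathbb{Q}[x]$). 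With this convention, the algebraic identity $B=C^m$ on $\mathcal{M}_q/R_{\lfloor \ell/d'\rfloor}$ given by \eqref{powermqr} coincides coordinate-by-coordinate with the binomial expansion above, completing the proof.
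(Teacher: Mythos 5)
Your proposal is correct and follows essentially the same route as the paper: factor $d=d'k$, use that the $d'$-th power map acts coordinatewise by Frobenius (yielding both the emptiness criterion and the unique $d'$-th root $a_{id'}^{q^e/d'}$), invert $k$ modulo the exponent of the truncated group via \eqref{powermqr}, and identify $\binom{m}{\sum c_i}$ with $\binom{1/k}{\sum c_i}$ by Lucas' theorem. The only (immaterial, since the group is abelian) difference is that you peel off the prime-to-$p$ part before the $p$-part, whereas the paper first reduces $(B^{d'})^k\equiv A$ to $B^{d'}\equiv A^{k^{\varphi(p^{e_1})-1}}$ and then extracts the $d'$-th root.
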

\begin{proof}
Let $B=(b_1,\ldots, b_{\ell}) \in \mathcal{M}_q / R_{\ell}$. The equation $B^d \equiv A \bmod R_{\ell}$ may be written as
\begin{equation}\label{ikeq}
(B^{d'})^k \equiv A \bmod R_{\ell}.
\end{equation}
By Lemma \ref{structlem}, the exponent of $\mathcal{M}_q / R_{\ell}$ is $p^{e_1}$ where $e_1$ is the smallest integer satisfying $p^{e_1} \ge \ell+1$. Hence we have $g^j \equiv g \bmod R_{\ell}$ whenever $j \equiv 1 \bmod p^{e_1}$. In particular, by Euler's theorem, $g^{k^{\varphi(p^{e_1})}} \equiv g \bmod R_{\ell}$.
By raising both sides of \eqref{ikeq} to the $k^{\varphi(p^{e_1})-1}$-th power, we find that \eqref{ikeq} is equivalent to
\begin{equation}\label{pied}
B^{d'} \equiv A^{k^{\varphi(p^{e_1})-1}} \bmod R_{\ell}.
\end{equation}
As we are in characteristic $p$, $(\sum a_i T^i)^{d'} = \sum a_i^{d'} T^{id'}$. In particular, the $j$-th coordinate of $B^{d'}$ is 0 if $d' \nmid j$, and is $b^{d'}_{\frac{j}{d'}}$ otherwise. Since $k^{\varphi(p^{e_1})-1}$ is coprime to the order of the group $\mathcal{M}_q/R_{\ell}$, it follows that $A$ is a $d'$-th power if and only if $A^{k^{\varphi(p^{e_1})-1}}$ is a $d'$-th power. Hence, if $a_{j} \neq 0$ for some $j$ not divisible by $d'$, then \eqref{pied} is not solvable in $B$. From now on we assume that $a_j =0$ for any $j$ not divisible by $d'$, and define
\begin{equation}
\tilde{A} = (a_{d'}^{q^{e}/d'}, a_{2 d'}^{q^{e}/d'}, \ldots, a_{\lfloor \frac{\ell}{d'} \rfloor d' }^{q^{e}/d'}, 0, \ldots, 0 ) \in \mathcal{M}_q/ R_{\ell},
\end{equation}
which satisfies $\tilde{A}^{d'} = A$. We define the following truncations of $\tilde{A}$ and $B$:
\begin{equation}
\tilde{A}_{\text{trunc}} = (a_{d'}^{q^{e}/d'}, a_{2 d'}^{q^{e}/d'}, \ldots, a_{\lfloor \frac{\ell}{d'} \rfloor d'}^{q^{e}/d'}), \quad B_{\text{trunc}} = (b_1, b_2, \ldots ,b_{\lfloor \frac{\ell}{d'} \rfloor}) \in \mathcal{M}_q / R_{ \lfloor \frac{\ell}{d'} \rfloor}.
\end{equation}
Equation \eqref{pied} is equivalent to
\begin{equation}\label{trunceq}
B_{\text{trunc}} \equiv \tilde{A}_{\text{trunc}}^{k^{\varphi(p^{e_1})-1}} \bmod R_{ \lfloor \frac{\ell}{d'} \rfloor}.
\end{equation}
Equation \eqref{trunceq} is the same as saying that $b_1, \ldots, b_{\lfloor \frac{\ell}{d'} \rfloor}$ assume the following values:
\begin{equation}\label{eqsbella}
b_{j} = \sum_{ \sum_{i=1}^{ \lfloor \frac{\ell}{d'} \rfloor } i c_i = j} \prod_{i=1}^{\lfloor \frac{\ell}{d'} \rfloor} a_{i d'}^{\frac{q^{e}}{d'} c_i} \binom{k^{\varphi(p^{e_1})-1}}{\sum c_i} \binom{\sum c_i}{c_1,c_2,\ldots,c_{\lfloor \frac{\ell}{d'} \rfloor}} \in \mathbb{F}_q.
\end{equation}
To establish \eqref{sathmval} from \eqref{eqsbella}, it remains to show that $\binom{k^{\varphi(p^{e_1})-1}}{\sum c_i}$ in \eqref{eqsbella} may be replaced with $\binom{\frac{1}{k}}{\sum c_i}$. This follows from Lucas' Theorem on the values of binomial coefficients modulo primes \cite{lucas1878}, which tells us that the function
\begin{equation}
f\colon  \mathbb{Q} \cap \mathbb{Z}_p  \to \mathbb{F}_q, \quad x \mapsto \binom{x}{\sum c_i}.
\end{equation}
depends only on the first $e_3$ $p$-adic digits of its input, where $e_3$ is the smallest integer satisfying $p^{e_3} > \sum c_i$. As $\sum c_i \le \sum i c_i = j\le \ell$, we have $e_1 \ge e_3$ and Euler's theorem implies that $k^{\varphi(p^{e_1})-1} \equiv \frac{1}{k} \bmod p^{e_3}$. Thus, $\binom{k^{\varphi(p^{e_1})-1}}{\sum c_i} \equiv  \binom{\frac{1}{k}}{\sum c_i} \bmod p$ as needed.
\end{proof}
From Lemmas \ref{lemmob} and \ref{solvingpowerbda}, we immediately obtain the following proposition.
\begin{proposition}\label{fullmobinv}
Let $n,\ell$ be positive integers. Let $\FF_q$ be a finite field of characteristic $p$. Let $t_1,\ldots,t_{\ell} \in \FF_q$. For any divisor $d$ of $n$ not divisible by $p$, define 
\begin{equation}\label{deftj}
\forall 1 \le j \le \ell: t_{j,d}= \sum\limits_{ \sum\limits_{i=1}^{ \ell } i c_i = j} \prod\limits_{i=1}^{\ell} t_{i}^{c_i} \binom{\frac{1}{d}}{\sum c_i} \binom{\sum c_i}{c_1,c_2,\ldots,c_{\ell}} \in \FF_q.
\end{equation}
For any divisor $d$ of $n$ divisible by $p$ exactly once, define 
\begin{equation}\label{defsj}
\forall 1 \le j \le \lfloor \frac{\ell}{p} \rfloor: \tilde{t}_{j,d}= \sum\limits_{ \sum\limits_{i=1}^{ \lfloor \frac{\ell}{p} \rfloor } i c_i = j} \prod\limits_{i=1}^{\lfloor \frac{\ell}{p} \rfloor} t_{ip}^{\frac{q}{p} c_i} \binom{\frac{p}{d}}{\sum c_i} \binom{\sum c_i}{c_1,c_2,\ldots,c_{\lfloor \frac{\ell}{p} \rfloor}} \in \FF_q.
\end{equation}
Then we have
\begin{equation}\label{idenmobexplicit}
\begin{split}
\pi_q(n,t_1,\ldots, t_{\ell}) &= \frac{1}{n} \sum\limits_{d:\, p \nmid d \mid n} \mu(d) \psi_q( \frac{n}{d}, t_{1,d}, t_{2,d}, \ldots, t_{\ell,d}) \\
& \quad + 1_{\forall i \le \ell \text{ s.t. }p\nmid i: \, t_i = 0 }\cdot \frac{1}{n} \sum\limits_{d:\, p \mid d \mid n,\, p^2 \nmid d} \mu(d) \psi_q( \frac{n}{d}, \tilde{t}_{1,d}, \tilde{t}_{2,d}, \ldots, \tilde{t}_{\lfloor \frac{\ell}{p} \rfloor, d}).
\end{split}
\end{equation}
\end{proposition}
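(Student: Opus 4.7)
The plan is to start from Lemma \ref{lemmob}, which gives
\begin{equation}
\pi_q(n,t_1,\ldots,t_\ell) \;=\; \frac{1}{n}\sum_{d\mid n}\mu(d)\sum_{B\in S_{A,d,\ell,q}} \psi_q\!\left(\tfrac{n}{d},B\right),
\end{equation}
where $A=(t_1,\ldots,t_\ell)$, and then plug in the explicit description of $S_{A,d,\ell,q}$ provided by Lemma \ref{solvingpowerbda}. Since $\mu(d)=0$ unless $d$ is squarefree, we may restrict the outer sum to squarefree divisors of $n$, which split into two classes: those coprime to $p$, and those of the form $d=p\cdot k$ with $p\nmid k$ (equivalently, $p\mid d$, $p^2\nmid d$). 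This splitting is exactly the one appearing in \eqref{idenmobexplicit}.

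For squarefree $d\mid n$ with $p\nmid d$, apply Lemma \ref{solvingpowerbda} with $d'=1$, $k=d$, and (necessarily) $e=0$, so that $q^e/d'=1$. The nonemptiness hypothesis is vacuous, $\lfloor \ell/d'\rfloor=\ell$, and the set $S_{A,d,\ell,q}$ is a singleton whose unique element $B=(b_1,\ldots,b_\ell)$ has coordinates matching the definition of $t_{j,d}$ in \eqref{deftj} term-by-term. Hence the contribution of such $d$ to the right-hand side of Lemma \ref{lemmob} is precisely $\mu(d)\,\psi_q\bigl(\tfrac{n}{d},t_{1,d},\ldots,t_{\ell,d}\bigr)$, producing the first sum in \eqref{idenmobexplicit}.

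For squarefree $d\mid n$ with $p\mid d$ and $p^2\nmid d$, write $d=p\cdot k$ with $(k,p)=1$, and apply Lemma \ref{solvingpowerbda} with $d'=p$. As $q=p^r\ge p$, we take $e=1$, giving $q^e/d'=q/p$. Lemma \ref{solvingpowerbda} tells us that $S_{A,d,\ell,q}=\varnothing$ unless $t_i=0$ for every $1\le i\le \ell$ with $p\nmid i$; this accounts for the indicator factor $1_{\forall i\le \ell,\,p\nmid i:\,t_i=0}$ in \eqref{idenmobexplicit}. When this indicator holds, Lemma \ref{solvingpowerbda} fixes the first $\lfloor \ell/p\rfloor$ coordinates of each $B\in S_{A,d,\ell,q}$ to the values $\tilde{t}_{j,d}$ defined in \eqref{defsj}, while the last $\ell-\lfloor \ell/p\rfloor$ coordinates of $B$ range freely over $\FF_q$. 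Summing $\psi_q(n/d,B)$ over these free coordinates collapses, by the very definition of $\psi_q$ with fewer prescribed coefficients, to $\psi_q\bigl(\tfrac{n}{d},\tilde{t}_{1,d},\ldots,\tilde{t}_{\lfloor \ell/p\rfloor,d}\bigr)$. This yields the second sum in \eqref{idenmobexplicit} and completes the derivation.

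The only point requiring genuine care, and the most likely source of bookkeeping errors, is the matching of combinatorial data between Lemma \ref{solvingpowerbda} and the definitions \eqref{deftj}--\eqref{defsj}: namely, verifying that the binomial coefficient $\binom{1/k}{\sum c_i}$ from Lemma \ref{solvingpowerbda} becomes $\binom{1/d}{\sum c_i}$ in the $p\nmid d$ case (where $k=d$) and $\binom{p/d}{\sum c_i}$ in the $p\|d$ case (where $k=d/p$), and that the exponent $q^e/d'$ of $t_{ip}$ in the second case equals $q/p$ as written in \eqref{defsj}. All of this is a direct substitution, so no new ideas beyond Lemmas \ref{lemmob} and \ref{solvingpowerbda} should be required.
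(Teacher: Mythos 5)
Your proposal is correct and follows exactly the route the paper intends: the paper derives Proposition \ref{fullmobinv} "immediately" from Lemmas \ref{lemmob} and \ref{solvingpowerbda}, and your write-up simply makes explicit the splitting of squarefree divisors into $p\nmid d$ and $p\,\|\,d$ and the corresponding specializations $d'=1$, $e=0$ and $d'=p$, $e=1$. All the bookkeeping (the indicator, the singleton in the first case, the free trailing coordinates collapsing into $\psi_q$ with fewer prescribed coefficients in the second) checks out.
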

\subsection{The case \texorpdfstring{$\ell=3$}{l=3}}
\begin{cor}\label{inv32}
Let $q=p^r$ be a prime power and let $n \in \mathbb{N}_{>0}$. Let $t_1,t_2,t_3 \in \FF_q$.
\begin{enumerate}
\item If $p \ge 5$ then 
\begin{equation}
\begin{split}
\pi_q(n,t_1,t_2,t_3) &= \frac{\sum_{d:\, p \nmid d \mid n} \mu(d) \psi_q(\frac{n}{d}, \frac{t_1}{d}, \frac{t_2}{d} + \frac{1-d}{2d^2} t_1^2, \frac{t_3}{d} + \frac{1-d}{d^2} t_1 t_2 + \frac{(2d-1)(d-1)}{6d^3} t_1^3)}{n}\\
&\quad + 1_{t_1=t_2=t_3=0} \cdot \frac{\sum_{d:\,p \mid d \mid n} \mu(d) q^{\frac{n}{d}}}{n}.
\end{split}
\end{equation}
\item If $p =3$ then
\begin{equation}
\begin{split}
\pi_q(n,t_1,t_2,t_3) &= \frac{\sum_{d:\,3 \nmid d \mid n} \mu(d) \psi_q(\frac{n}{d}, \frac{t_1}{d}, \frac{t_2}{d} + \frac{1-d}{2d^2} t_1^2, \frac{t_3}{d} + \frac{1-d}{d^2} t_1 t_2 + \frac{(2d-1)(d-1)}{6d^3} t_1^3)}{n}\\
&\quad + 1_{t_1=t_2=0} \cdot \frac{\sum_{d:\, 3 \mid  d \mid n} \mu(d) q^{\frac{n}{d}-1}}{n}.
\end{split}
\end{equation}
\item If $p=2$ then
\begin{equation}
\begin{split}
\pi_q(n,t_1,t_2,t_3) &= \frac{\sum_{d \equiv 1 \bmod 4, d \mid n} \mu(d) \psi_q(\frac{n}{d}, t_1,t_2,t_3)}{n} +\frac{\sum_{d \equiv 3 \bmod 4, d \mid n} \mu(d) \psi_q(\frac{n}{d}, t_1,t_2+t_1^2,t_3+t_1^3)}{n} \\
& \quad + 1_{t_1=t_3=0} \cdot  \frac{\sum_{d:\, 2 \mid d \mid n} \mu(d) q^{\frac{n}{d}-1}}{n}.
\end{split}
\end{equation}
\end{enumerate}
\end{cor}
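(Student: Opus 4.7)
The plan is to specialize Proposition~\ref{fullmobinv} to $\ell=3$ and to carry out the combinatorial bookkeeping in each characteristic. For any divisor $d$ of $n$ coprime to $p$, I would enumerate the tuples $(c_1,c_2,c_3)\in\mathbb{N}^3$ with $c_1+2c_2+3c_3=j$ appearing in \eqref{deftj} and evaluate the binomial coefficients
\begin{equation}
\binom{1/d}{1}=\frac{1}{d},\qquad \binom{1/d}{2}=\frac{1-d}{2d^2},\qquad \binom{1/d}{3}=\frac{(d-1)(2d-1)}{6d^3},
\end{equation}
to obtain $t_{1,d}=t_1/d$, $t_{2,d}=t_2/d+\frac{1-d}{2d^2}t_1^2$, and $t_{3,d}=t_3/d+\frac{1-d}{d^2}t_1t_2+\frac{(2d-1)(d-1)}{6d^3}t_1^3$. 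These match the formulas appearing in the $p\ge 5$ and $p=3$ cases of the corollary.

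For the correction term involving $p\mid d$, squarefreeness of $\mu$ renders the side condition $p^2\nmid d$ in Proposition~\ref{fullmobinv} automatic. When $p\ge 5$ one has $\lfloor 3/p\rfloor=0$, so no $\tilde t_{j,d}$ parameters arise and the contribution from each such $d$ is just $\psi_q(n/d)=q^{n/d}$ by \eqref{psicount}. When $p\in\{2,3\}$ one has $\lfloor 3/p\rfloor=1$, so a single parameter $\tilde t_{1,d}\in\FF_q$ appears, but Carlitz's identity \eqref{eq:NewCarlitz} gives $\psi_q(n/d,\tilde t_{1,d})=q^{n/d-1}$ independently of $\tilde t_{1,d}$. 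The indicator $1_{\forall i\le 3\text{ with }p\nmid i:\, t_i=0}$ reduces respectively to $1_{t_1=t_2=t_3=0}$, $1_{t_1=t_2=0}$, $1_{t_1=t_3=0}$ for $p\ge 5$, $p=3$, $p=2$.

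It remains to simplify the $t_{j,d}$ in characteristic $2$, where the literal expressions above involve $2$ and $6$ in denominators. Here I would invoke Lucas' theorem modulo $2$ on $\binom{1/d}{k}$, with $1/d$ interpreted as the inverse of $d$ modulo a suitable power of $2$, as in the proof of Lemma~\ref{solvingpowerbda}. The result is $\binom{1/d}{1}\equiv 1\bmod 2$ always, while $\binom{1/d}{2}$ and $\binom{1/d}{3}$ both vanish modulo $2$ if $d\equiv 1\bmod 4$, and are both $\equiv 1\bmod 2$ if $d\equiv 3\bmod 4$. Combined with $2t_1t_2=0$ in $\FF_{2^r}$, this produces precisely the split formulas in the $p=2$ case of the corollary.

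The main obstacle is the careful interpretation of the rational expression $(d-1)(2d-1)/(6d^3)$ in characteristic $3$: it is formally $0/0$, since $6\equiv 0\bmod 3$, while the numerator also vanishes modulo $3$ whenever $(d,3)=1$ (as then $d\equiv 1$ or $2d\equiv 1\pmod 3$). One must cancel the factor of $3$ in the numerator against the factor of $3$ in $6$ over $\mathbb{Q}$ before reducing modulo $3$; equivalently, one verifies via Lucas' theorem that the correct element of $\FF_{3^r}$ is the $3^1$-digit of $1/d\bmod 9$, which matches the cancelled rational.
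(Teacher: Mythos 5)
Your proposal is correct and follows essentially the same route as the paper: specialize Proposition~\ref{fullmobinv} to $\ell=3$, evaluate the binomial coefficients in \eqref{deftj} to get the stated $t_{j,d}$, dispose of the $p\mid d$ term via \eqref{psicount} (for $p\ge 5$) or \eqref{eq:NewCarlitz} (for $p\in\{2,3\}$), and split the $p=2$ case according to $d\bmod 4$ via the $2$-adic values of $\binom{1/d}{2}$ and $\binom{1/d}{3}$. Your explicit remark on interpreting $(d-1)(2d-1)/(6d^3)$ as a $p$-adic integer in characteristic $3$ is a point the paper leaves implicit, and is resolved correctly.
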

\begin{proof}
Let $(t_1,t_2,t_3) \in \mathcal{M}_q / R_3$. Specializing \eqref{idenmobexplicit} to $\ell=3$, we get
\begin{equation}\label{idenmob3}
\begin{split}
\pi_q(n,t_1,t_2,t_3) &= \frac{1}{n} \sum\limits_{d:\, p \nmid d \mid n} \mu(d) \psi_q( \frac{n}{d}, \frac{t_1}{d}, \frac{t_2}{d} + \frac{1-d}{2d^2} t_1^2, \frac{t_3}{d} + \frac{1-d}{d^2} t_1t_2 + \frac{(2d-1)(d-1)}{6d^3}t_1^3) \\
& \quad +  1_{\forall i \le 3 \text{ s.t. }p\nmid i: \, t_i = 0 }\cdot \frac{1}{n} \sum\limits_{d:\, p \mid d \mid n,\, p^2 \nmid d} \mu(d) \psi_q( \frac{n}{d}, \tilde{t}_{1,d}, \tilde{t}_{2,d}, \ldots, \tilde{t}_{\lfloor \frac{3}{p} \rfloor, d}).
\end{split}
\end{equation}
where $\tilde{t}_{i,\lfloor \frac{3}{p} \rfloor}$ are defined in \eqref{defsj}.
\begin{enumerate}
\item Assume that $p\ge 5$. Then the condition that $t_i=0$ if $i\le 3$ and $p \nmid i$ (appearing in the second term of \eqref{idenmob3}) is equivalent to $t_1=t_2=t_3=0$. This shows that the second term in \eqref{idenmob3} is 
\begin{equation}\label{secondsumpgreater5}
1_{t_1=t_2=t_3=0} \cdot \frac{1}{n} \sum_{p \mid d \mid n} \mu(d) \psi_q(\frac{n}{d}).
\end{equation}
From \eqref{idenmob3}, \eqref{secondsumpgreater5} and \eqref{psicount}, the first case of the corollary is established.
\item Assume that $p =3$. Then the condition that $t_i=0$ if $i\le 3$ and $3 \nmid i$ (appearing in the second term of \eqref{idenmob3}) is equivalent to $t_1=t_2=0$. This shows that the second term in \eqref{idenmob3} is 
\begin{equation}\label{secondsump3}
1_{t_1=t_2=0} \cdot \frac{1}{n} \sum_{3 \mid d \mid n, 9 \nmid d} \mu(d) \psi_q(\frac{n}{d},\frac{1}{d/3} t_3^{\frac{q}{3}}).
\end{equation}
From \eqref{idenmob3}, \eqref{secondsump3} and \eqref{eq:NewCarlitz}, the second case of the corollary is established.
\item Assume that $p=2$. Then the condition that $t_i=0$ if $i\le 3$ and $2 \nmid i$ (appearing in the second term of \eqref{idenmob3}) is equivalent to $t_1=t_3=0$. This shows that the second term in \eqref{idenmob3} is 
\begin{equation}\label{secondsump2}
1_{t_1=t_3=0} \cdot \frac{1}{n} \sum_{2 \mid d \mid n, 4\nmid d} \mu(d) \psi_q(\frac{n}{d},\frac{1}{d/2} t_2^{\frac{q}{2}}).
\end{equation}
From \eqref{idenmob3}, \eqref{secondsump2} and \eqref{eq:NewCarlitz}, we find that
\begin{equation}\label{idenmob3p2}
\begin{split}
\pi_q(n,t_1,t_2,t_3) &= \frac{1}{n} \sum\limits_{ 2 \nmid d \mid n} \mu(d) \psi_q( \frac{n}{d}, \frac{t_1}{d}, \frac{t_2}{d} + \frac{1-d}{2d^2} t_1^2, \frac{t_3}{d} + \frac{1-d}{d^2} t_1 t_2 + \frac{(2d-1)(d-1)}{6d^3}t_1^3) \\
& \quad +  1_{t_1=t_3=0}\cdot \frac{1}{n} \sum\limits_{ 2 \mid d \mid n} \mu(d) q^{\frac{n}{d}-1}.
\end{split}
\end{equation}
If $2 \nmid d \mid n$, then we have the following equalities in $\FF_q$:
\begin{equation}\label{odddcase}
\begin{split}
\frac{t_2}{d} + \frac{1-d}{2d^2} t_1^2 &= \begin{cases} t_2 & \text{if }d \equiv 1 \bmod 4, \\ t_2 + t_1^2 & \text{if }d \equiv 3 \bmod 4, \end{cases} \\
\frac{t_3}{d} + \frac{1-d}{d^2} t_1 t_2 + \frac{(2d-1)(d-1)}{6d^3}t_1^3 & = \begin{cases} t_3 & \text{if }d \equiv 1 \bmod 4, \\ t_3+ t_1^3 & \text{if }d \equiv 3 \bmod 4. \end{cases}
\end{split}
\end{equation}
The third case of the corollary follows from \eqref{idenmob3p2} and \eqref{odddcase}.
\end{enumerate}
\end{proof}

\bibliographystyle{alpha}
\bibliography{references}

Raymond and Beverly Sackler School of Mathematical Sciences, Tel Aviv University, Tel Aviv 69978, Israel.
E-mail address: ofir.goro@gmail.com

\end{document}